\definecolor{darkgreen}{rgb}{0,0.5,0}
\global\mdfdefinestyle{myframe}{leftmargin=.75in,rightmargin=.75in,linecolor=black,linewidth=1.5pt,innertopmargin=10pt,innerbottommargin=10pt} 
\date{}
\newtheorem{theorem}{Theorem}[section]
\newtheorem{lemma}[theorem]{Lemma}
\newtheorem{claim}[theorem]{Claim}
\newtheorem{infclaim}[theorem]{Informal Claim}
\newtheorem{definition}[theorem]{Definition}
\newtheorem{corollary}[theorem]{Corollary}
\newtheorem{condition}{Condition}
\crefname{theorem}{Theorem}{Theorems}
\Crefname{lemma}{Lemma}{Lemmas}
\Crefname{alg}{Algorithm}{Algorithms}
\Crefname{claim}{Claim}{Claims}
\Crefname{infclaim}{Claim}{Claims}
\Crefname{observation}{Observation}{Observations}
\Crefname{invariant}{Invariant}{Invariants}
\Crefname{algorithm}{Algorithm}{Algorithms}
\newcommand{\cnt}{\rho}
\newcommand{\muh}{\hat{\mu}}
\newcommand{\Ex}{\mathbb{E}}
\renewcommand{\Pr}{\operatorname{{Pr}}}
\newcommand{\dfp}{\delta_{\textrm{false-pos}}}
\newcommand{\dfn}{\delta_{\textrm{false-neg}}}
\newcommand{\dpa}{\delta_{\textrm{param}}}
\newcommand{\dnfp}{\delta_{\textrm{no-false-pos}}}
\newcommand{\dnfn}{\delta_{\textrm{no-false-neg}}}
\newcommand{\spa}{S_{\textrm{params}}}
\newcommand{\conf}{\mathcal{C}_{\textrm{conf}}}
\newcommand{\Tls}{T_{\textrm{left-shift}}}
\newcommand{\Lls}{L_{\textrm{left-shift}}}
\newcommand{\Rls}{R_{\textrm{left-shift}}}
\newcommand{\other}{\operatorname{Other}}
\newcommand{\dfil}{d_{\textrm{filter}}}
\newcommand{\dest}{d_{\textrm{est}}}
\newcommand{\ssta}{\operatorname{Start}}
\newcommand{\send}{\operatorname{End}}
\newcommand{\rsmall}{R_{\textrm{small}}}
\newcommand{\rlarge}{R_{\textrm{large}}}
\DeclareMathOperator*{\argmax}{arg\,max}
\DeclareMathOperator*{\argmin}{arg\,min}
  \def\title@font{\Large\bfseries}
  \let\ltx@maketitle\@maketitle
  \def\@maketitle{\bgroup%
    \let\ltx@title\@title%
    \def\@title{\resizebox{\textwidth}{!}{%
      \mbox{\title@font\ltx@title}%
    }}%
    \ltx@maketitle%
  \egroup}
\title{Near-Optimal Mean Estimation with Unknown, Heteroskedastic Variances}
\author[]{Spencer Compton}
\author[]{Gregory Valiant}
\affil[]{Stanford University \authorcr
  \{\tt comptons, valiant\}@stanford.edu}
\begin{document}

\maketitle
\begin{abstract}
    Given data drawn from a collection of Gaussian variables with a common mean but different and unknown variances, what is the best algorithm for estimating their common mean? We present an intuitive and efficient algorithm for this task. As different closed-form guarantees can be hard to compare, the Subset-of-Signals model \cite{liang2020learning} serves as a benchmark for ``heteroskedastic'' mean estimation: given $n$ Gaussian variables with an unknown subset of $m$ variables having variance bounded by 1, what is the optimal estimation error as a function of $n$ and $m$? Our algorithm resolves this open question up to logarithmic factors, improving upon the previous best known estimation error by polynomial factors when $m = n^c$ for all $0<c<1$. 
    Of particular note, we obtain error $o(1)$ with $m = \tilde{O}(n^{1/4})$ variance-bounded samples, whereas previous work required $m = \tilde{\Omega}(n^{1/2})$. Finally, we show that in the multi-dimensional setting, even for $d=2$, our techniques enable rates comparable to knowing the variance of each sample.

\end{abstract}

\newpage

\section{Introduction}

Over the past decade, there has been a significant effort from the theoretical computer science and machine learning communities to reexamine fundamental learning and statistical estimation problems in non-i.i.d. settings.  Many of these efforts have focused on relaxing the independence assumption.  This includes the large body of work on \emph{robust statistics}, where a portion of the data are assumed to be drawn i.i.d. from a fixed distribution and no assumptions are made about the remainder of the data.  On the TCS side, work in robust statistics began by considering the problem of mean estimation in the Gaussian setting~\cite{diakonikolas2019robust,lai2016agnostic}, and then built up to considering more complex problems of learning or optimization (e.g~\cite{charikar2017learning,diakonikolas2019efficient}).   

Here, we instead consider the heterogeneous data setting, where samples are drawn independently, but from non-identical distributions.  Even for some of the most fundamental problems, such as the problem of mean estimation with Gaussian data that we consider, much is still unknown about both the information theoretic and computational landscapes in this heterogeneous but independent setting.  This is despite the practical importance of accurately extracting information from datasets whose contents have been gathered from heterogeneous sources (e.g. sourced from different workers, contributed by different hospitals or doctors, scraped from different websites, etc.).

Concretely, we consider the setting where we observe $n$ independent heteroskedastic (meaning having different variances) Gaussian random variables that have a common mean: $X_1 \sim N(\mu, \sigma_1^2),\ldots, X_n \sim N(\mu, \sigma_n^2)$, and our goal is to estimate their common mean, $\mu$.  Crucially, the variances $\sigma_i^2$ are \emph{unknown}.  This problem was explored in both the $d=1$ and higher dimensional settings in the work of Chierichetti, Dasgupta, Kumar, and Lattanzi \cite{chierichetti2014learning}.  In the case where the variances are known, the unbiased estimator that weights $X_i$ proportionally to $1/\sigma_i^2$ is easily shown to achieve optimal error $\Theta(1/\sqrt{\sum 1/\sigma_i^2})$ \cite{ibragimov1981statistical}\footnote{Theorem 3.1 of \cite{chierichetti2014learning} also contains a short proof of this.}.   When the variances are unknown, however, both the problem and the optimal rates seems to change fundamentally.   

In an effort to expose the core challenges of this problem, Liang and Yuan \cite{liang2020learning} introduced the Subset-of-Signals variant, parameterized by two numbers, $m,n$: as above, one observes $n$ independent Gaussian random variables with a common mean, $X_1,\ldots,X_n$, with the assumption that $m$ have variance at most 1, and one makes no assumptions about the variances of the remaining $n-m.$  Our results address the more general formulation, though are easier to interpret in this Subset-of-Signals setting, for which our approach achieves the known lower bounds, up to logarithmic factors.

\subsection{Related Work}

As mentioned above, this problem of heteroskedastic mean estimation was 
considered by Chierichetti, Dasgupta, Kumar, and Lattanzi in the $d=1$ dimensional and (isotropic) high dimensional setting where $X_i \sim N(\mu,\sigma_i^2I)$~\cite{chierichetti2014learning}.  Note that in this formulation, mean estimation becomes easier as $d$ becomes larger, as there is more information with which to infer the values of $\sigma_i$. Thus, while independent, the observations in different dimensions are often called ``entangled.'' When $d = \Omega(\log(n))$, \cite{chierichetti2014learning} attain estimation error of $\Theta\left(\sqrt{\frac{1}{\sum_{i=2}^n \frac{1}{\sigma_i^2}}}\right)$ for each dimension with high probability. Note that this is nearly identical to the classical known-variance rate, other than missing the dependence on $\sigma_1$. These results prompted subsequent work to focus on the more challenging small dimensional or one-dimensional settings for which it is more difficult or impossible to accurately recover the $\sigma_i$'s.

In the one-dimensional setting, \cite{chierichetti2014learning} attains a guarantee with respect to the $O(\log(n))$ smallest $\sigma_i$, giving an algorithm with expected error $\Ex[|\mu - \muh|] = \min\limits_{2 \le k \le \log(n)} \tilde{O}(n^{1/2 (1 + 1/(k-1))} \sigma_k)$. Moreover, they showed lower bounds that demonstrated how the known-variance rates can be polynomially better than an optimal estimator that does not know the variances. 

Subsequent works, ~\cite{pensia2019estimating,pensia2022estimating,devroye2020mean,devroye2023mean,liang2020learning,yuan2020learning}, which we discuss below, improve upon this in various regimes: their upper and lower bounds in the case of the Subset-of-Signals setting, together with our results, are depicted in Figure~\ref{fig:subset}.

The work of Pensia, Jog, and Loh (preliminarily \cite{pensia2019estimating} and later \cite{pensia2022estimating}) develops machinery for analyzing the performance of classic estimators in this setting: the modal estimator, $k$-closest estimator, and the median. 
Using this, they show guarantees for a hybrid estimator and give complementary lower bounds that illustrate how under some conditions on $\sigma_1,\dots,\sigma_n$ their estimator is near-optimal.\footnote{We later observe in \cref{fig:subset} that its guarantees can be polynomially suboptimal in a natural setting.} They also investigate the setting of heteroskedastic linear regression, as well as showing guarantees for their algorithm in $d>1$ dimensions. Moreover, their results generalize from Gaussian distributions to radially symmetric and unimodal distributions.

The work of Devroye, Lattanzi, Lugosi, and Zhivotovskiy (preliminarily \cite{devroye2020mean} and later \cite{devroye2023mean}) also develops tools for sharp analysis of the sample median and modal estimator. In order to provide an adaptive algorithm requiring no parameter tuning, they employ subroutines that yield confidence intervals which they eventually intersect. Our algorithm will utilize a similar paradigm of intersecting confidence intervals obtained by (different) subroutines.

The works of Liang and Yuan \cite{liang2020learning,yuan2020learning} provide estimation guarantees for the iterative truncation algorithm (a widely used heuristic). Importantly, they also introduce the Subset-of-Signals model, where $m$ samples have variance bounded by $1$, and it is desired to know the optimal estimation guarantee as a function of $n$ and $m$. This framing is particularly helpful because the closed-form guarantees of various related work can otherwise be difficult to directly compare. In \cref{fig:subset}, we show the guarantees of related work in terms of the Subset-of-Signals model. Finally, Liang and Yuan show lower bounds for the optimal estimation error in this model.

\begin{figure}[ht]
    \centering
    \includegraphics[scale=0.2]{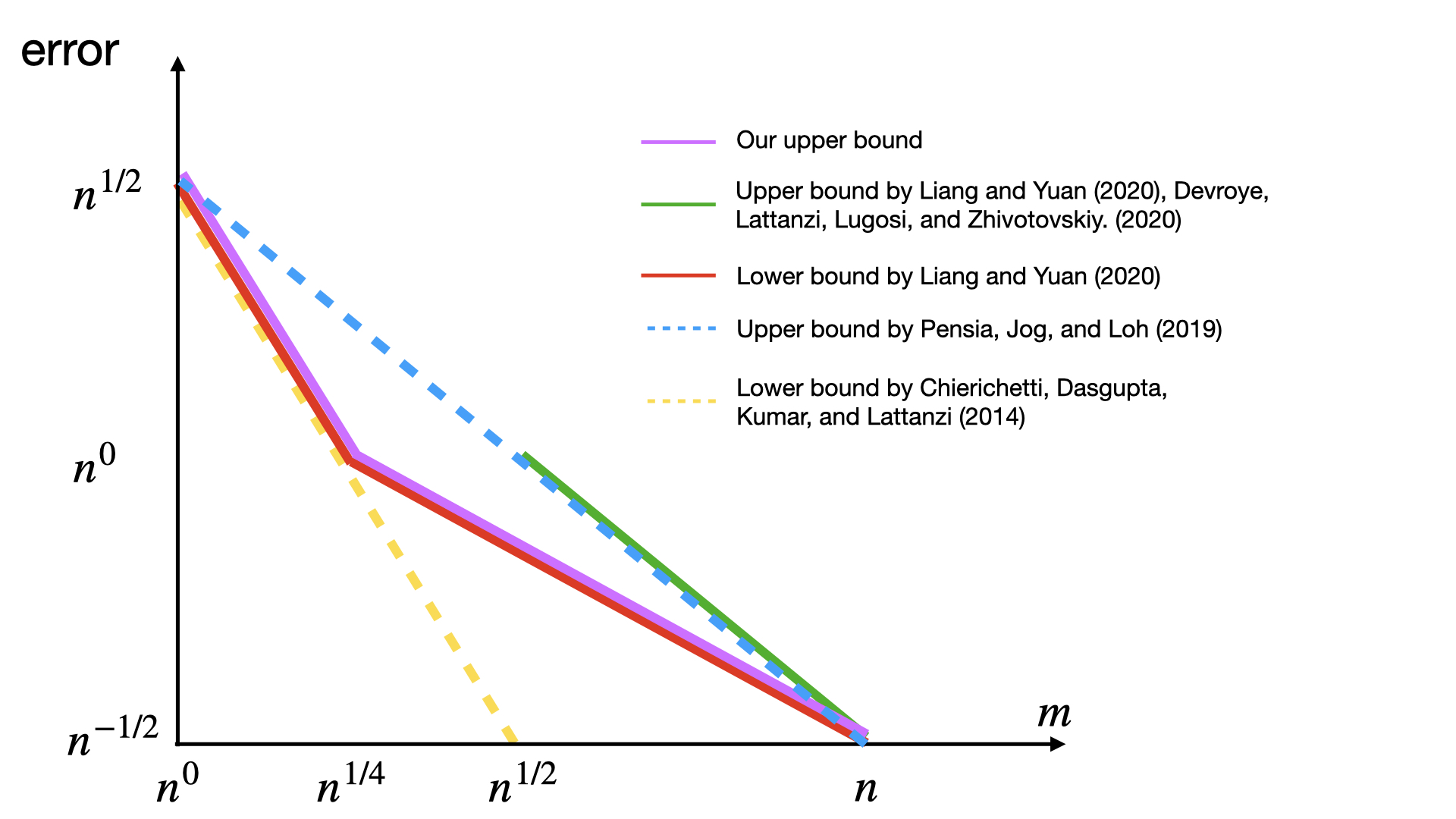}
    \caption{Guarantees of our upper bound and those of prior work for mean estimation in the Subset-of-Signals model, where one observes $n$ independent Gaussian random variables with a common mean, and an unknown subset of $m \le n$ samples have variance at most $1$, with no assumptions on the variance of the remaining $n-m$.  The $x$-axis denotes $m$, the number of samples with variance bounded by $1$, and the $y$-axis denotes the estimation error. Our upper bound matches the known lower bound up to logarithmic factors, and improves the estimation error by polynomial factors when $m=n^c$ for $0<c<1$.  (Figure based on plot from  \cite{liang2020learning}).\label{fig:subset}}
    
\end{figure}

\paragraph{Related Work Beyond Heteroskedastic Mean Estimation:}  There have been several lines of work exploring property testing, estimation, and learning in settings with independent, but non-identical samples.  These models span a large spectrum in terms of how much heterogeneity is present, relative to the sample size.  On one extreme, there is a large volume of work on learning \emph{mixture models} (of Gaussians, linear regressions, etc., see e.g. \cite{belkin2010polynomial,moitra2010settling,diakonikolas2023sq,hopkins2018mixture,pmlr-v119-kong20a}).  Typically, in these settings there are a small number (often just a constant number) of distributions, and each datapoint is drawn i.i.d. from one of these.   Comparatively fewer works explore the other extreme, where a single sample (or small batch of samples) is drawn from each distribution---typically too little to learn the distribution---and the goal is to estimate some property of the set of distributions.  This includes the property testing work of Levi et al.~\cite{levi2013testing}, and work on estimating properties of populations of parameters, such as estimating the multiset of coin biases given a small number of tosses of each coin (e.g.~\cite{tian2017learning,vinayak2019maximum}).

\newpage
\subsection{Our Contributions}

In our work, we design new algorithms for heteroskedastic mean estimation with polynomially-better error guarantees than prior work, explicitly answering the open problem of \cite{liang2020learning} (see \cref{fig:subset}):

\begin{center}
    \emph{Given samples of $n$ independent Gaussians with a common mean, and with an unknown subset of $m$ samples having variance bounded by 1, what is the best possible estimation error?}
\end{center}

\begin{restatable}[Optimal Subset-of-Signals]{theorem}{theoremsubset}\label{theorem:sub-opt}
     Consider observing $n$ Gaussian samples with a common mean $X_i \sim N(\mu, \sigma_i^2)$, where $\sigma_1 \le \dots \le \sigma_m \le 1$, the variances are unknown to the algorithm, and samples are presented in an arbitrary order. For any constant $\delta$, there exists a constant $C$ such that with probability at least $1 - \frac{1}{n^{\delta}}$, \cref{algo:estimate} attains:

     \begin{itemize}
         \item $\tilde{O}\left( \frac{n}{m^4} \right)^{1/2}$ error if $ C \log(n) \le m \le n^{1/4}$
         \item $\tilde{O}\left( \frac{n}{m^4} \right)^{1/6}$ error if $n^{1/4} \le m \le n$
     \end{itemize}
\end{restatable}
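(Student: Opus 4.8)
The plan is to design an estimator that, at its heart, estimates the mean restricted to a well-chosen ``scale'' of samples and then combines such estimates across scales by intersecting confidence intervals. Concretely, I would first observe that if we knew a threshold $\tau$ such that roughly $k$ of the samples have $\sigma_i \le \tau$ (and not too many more have $\sigma_i$ much smaller), then among those $k$ samples the empirical distribution is a mixture of Gaussians each with variance $\le \tau^2$ concentrated near $\mu$, and we can localize $\mu$ using an order-statistic/$k$-closest or modal-type subroutine to within $\tilde O(\tau/\sqrt{k})$ — this is essentially the known-variance rate on that sub-population. The subtlety is that $m$ and the scale of the variance-bounded samples are unknown, so the algorithm must search over dyadic choices of $\tau$ and over dyadic guesses for how many samples fall below $\tau$, producing a confidence interval for each $(\tau, \text{count})$ guess, and then take the intersection over all guesses that are ``self-consistent'' (i.e. the observed number of samples in the interval is at least the guessed count). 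Because at least one guess corresponds to the true $m$ samples of variance $\le 1$, the intersection is nonempty and contained in every valid interval; the final error is governed by the \emph{smallest} valid interval, which is what yields the claimed bound.

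The second ingredient is to explain where the exponents $n/m^4$ and the $1/2$ vs. $1/6$ split come from; this is the main quantitative step and the place I expect the real work to be. The point is that a naive use of the $m$ variance-$1$ samples gives only $\tilde O(1/\sqrt m)$, whereas the lower bound is polynomially smaller, so the algorithm must bootstrap: use a crude estimate of $\mu$ (error $\tilde O(1/\sqrt m)$, say) to \emph{identify} which samples are likely the low-variance ones, or more precisely to get a much finer handle on the distribution of the small $\sigma_i$'s near $\mu$, and then re-estimate. I would set this up as an iterative refinement: given a current confidence window of width $w$ around $\mu$, the number of ``contaminating'' high-variance samples landing in that window scales like $(n-m)\cdot w$ (for $w$ up to constant), so once $w$ is small the low-variance samples dominate inside the window and we can sharpen. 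Balancing the number of low-variance samples that survive in a width-$w$ window (about $m$, since they're all within $O(1)$ of $\mu$ and we can take $\tau$ as small as $\sim w$ once $w \lesssim 1$, giving $\sim m$ of them with $\sigma_i \lesssim w$ only in the favorable tail) against the $\sim nw$ contaminants, and tracking how the achievable width improves as $w^{\text{new}} \sim \tau/\sqrt{(\text{clean count})}$ with $\tau \sim w$, produces a recursion whose fixed point is $w \sim (n/m^4)^{1/6}$ in the regime $m \ge n^{1/4}$; when $m \le n^{1/4}$ the recursion bottoms out earlier because there aren't enough clean samples, and one is left with $w \sim (n/m^4)^{1/2}$ governed by a single (non-iterated) application. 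I would make this precise by proving an invariant of the form ``after round $t$, \cref{algo:estimate} holds a confidence interval of width $\le w_t$ containing $\mu$, with probability $1 - t/\poly(n)$,'' and solving the recurrence for $w_t$.

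The third block is the concentration/correctness bookkeeping, which I expect to be routine but lengthy: (i) for each dyadic scale and count guess, the relevant subroutine (median of a sliding window, or $k$-closest, as analyzed in the cited prior work of Pensia--Jog--Loh and Devroye et al.) outputs a valid confidence interval with failure probability $1/\poly(n)$, using Gaussian tail bounds and a Chernoff bound on how many high-variance samples land in a given interval; (ii) a union bound over the $O(\log^2 n)$ scale/count pairs and the $O(\log n)$ refinement rounds keeps the total failure probability below $1/n^\delta$ after choosing $C$ large enough (this is where the hypothesis $m \ge C\log n$ enters — we need enough clean samples in each window for concentration); and (iii) the ``valid guess'' corresponding to the truth is never discarded, so the intersection is both nonempty and no larger than the best individual interval. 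Assembling these, the returned point (any point in the final intersection, e.g. its midpoint) is within the best width of $\mu$, which matches the two stated bounds up to the logarithmic factors hidden in $\tilde O(\cdot)$.

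The main obstacle, I believe, is step two — getting the iteration to actually close at the exponent $1/6$ rather than something weaker, which requires carefully quantifying, at each scale, both a \emph{lower} bound on the number of low-variance samples that are genuinely concentrated within the current window (so the signal doesn't wash out as the window shrinks) and an \emph{upper} bound on the high-variance contamination inside it, and then checking that the resulting recursion's fixed point is not spoiled by the $\polylog$ losses accumulated over the $O(\log n)$ rounds. A secondary obstacle is making the search over unknown $m$ and unknown variance scale genuinely adaptive: the algorithm must certify which of its many candidate confidence intervals to trust without knowing $m$, and the intersection trick only works if we can argue that \emph{every} interval it keeps really does contain $\mu$ with high probability, which forces the per-guess subroutines to be conservative (valid for all inputs) rather than merely valid on the true instance.
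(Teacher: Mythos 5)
Your high-level scaffolding (search over dyadic scales, produce a confidence interval per scale, intersect, and certify validity so that every kept interval contains $\mu$) matches the paper's architecture. But the two load-bearing ingredients are missing or wrong, and the gap is not just bookkeeping.

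First, your per-scale certificate is the wrong one. You claim that among $k$ samples of variance $\le \tau^2$ a ``$k$-closest or modal-type subroutine'' localizes $\mu$ to within $\tilde O(\tau/\sqrt{k})$. It does not: even in the clean homoskedastic case the modal and $k$-closest estimators suffer cube-root asymptotics, error $\Theta(\tau k^{-1/3})$, because the expected advantage of the true mode over a $\Delta$-shifted candidate is \emph{quadratic} in $\Delta$. The paper's central idea is to replace ``most samples in the window'' by a \emph{balance} certificate: the counts in $[\hat\mu-w,\hat\mu]$ and $[\hat\mu,\hat\mu+w]$ are equal, yet become significantly unequal after shifting by $\pm\Delta$. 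The shifted imbalance is \emph{linear} in $\Delta$ (roughly $m\Delta$ against noise $\sqrt{T}$ where $T$ is the \emph{total} count in the window including contaminants), which is what recovers square-root rates and correctly charges the contamination. Without this, your intervals at each scale are polynomially too wide.

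Second, your derivation of the $(n/m^4)^{1/6}$ exponent via iterative bootstrapping (``use a crude estimate to identify the low-variance samples, then re-estimate'') is not how the bound arises and I do not believe it can be made to work in one dimension: a high-variance sample that happens to land near $\mu$ is statistically indistinguishable from a low-variance one, so proximity to a crude estimate cannot identify the clean sub-population (this is precisely why the known-variance rate is unattainable here; filtering of this kind only becomes available in the paper's $d=2$ section, where the second coordinate is an independent witness). The actual mechanism is a \emph{static dichotomy}, not a recursion: either the width-$1$ balance driven by the $m$ good samples is detectable, forcing $\Delta \lesssim \sqrt{T\log n}/m$ where $T=\Ex[\cnt(\mu-1,\mu+1)]$; or it is not, which forces $T \gtrsim m^2/\log n$, i.e.\ the contaminants are densely packed near $\mu$, whence a median-like ($w=\infty$) balance succeeds once $\Delta\, T \gtrsim \sqrt{n\log n}$. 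Setting the two failure conditions against each other gives $\Delta^{3/2}\lesssim n^{1/4}/m$ up to logs, i.e.\ $(n/m^4)^{1/6}$; for $m\le n^{1/4}$ the first branch alone already gives $o(1)$ and one gets $(n/m^4)^{1/2}$. Moreover, in the unsimplified model the contaminants' variances can be spread adversarially across scales so that neither the width-$1$ nor the median test fires, and the paper needs a pigeonhole over dyadic variance buckets to find some intermediate width $2^j$ at which balance is detectable; your proposal has no analogue of this. A final, more fixable omission: your union bound over $O(\log^2 n)$ guesses ignores the continuum of candidate centers $\hat\mu$ (and the data-dependence of the chosen scales), which the paper handles with a uniform-convergence argument.
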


As our algorithm is scale-invariant and translation-invariant, this also enables the closed-form:

\begin{corollary}
Consider observing $n$ Gaussian samples with a common mean $X_i \sim N(\mu, \sigma_i^2)$, where $\sigma_1 \le \dots \le \sigma_n$, the variances are unknown to the algorithm, and samples are presented in an arbitrary order. For any constant $\delta$, there exists a constant $C$ such that with probability at least $1 - \frac{1}{n^{\delta}}$, \cref{algo:estimate} attains error

    \begin{center}
        $\tilde{O} \left( \min \left(\min\limits_{C \log(n) \le i \le n^{1/4}} \sigma_i \cdot \left( \frac{n}{i^4} \right)^{1/2},  \min\limits_{n^{1/4} \le i \le n} \sigma_i \cdot \left( \frac{n}{i^4} \right)^{1/6} \right) \right)$
    \end{center}
\end{corollary}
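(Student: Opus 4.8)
The plan is to obtain the corollary from \cref{theorem:sub-opt} by a single rescaling step, using the stated scale-invariance of \cref{algo:estimate}. Fix an instance $X_i \sim N(\mu,\sigma_i^2)$ with $\sigma_1 \le \dots \le \sigma_n$, and let $i^\star$ be an index attaining the outer minimum in the corollary's bound. The first thing to note is that $i^\star$ depends only on the variances $\sigma_1,\dots,\sigma_n$ — which are fixed (adversarial), not random — and that by construction $i^\star$ lies either in $[C\log n, n^{1/4}]$ or in $[n^{1/4}, n]$, according to which of the two inner minima is the smaller; in particular $i^\star \ge C\log n$.

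Next I would pass to the rescaled instance $X_i' \eqdef X_i/\sigma_{i^\star}$. Since the $\sigma_i$ are sorted, the first $i^\star$ rescaled samples have variances $(\sigma_i/\sigma_{i^\star})^2 \le 1$, so $(X_i')_{i=1}^n$ is a legitimate instance of the Subset-of-Signals setting with the same $n$, with $m = i^\star$ variance-bounded samples, common mean $\mu/\sigma_{i^\star}$, and the first $i^\star$ variances still in sorted order (the remaining $n-i^\star$ variances are unconstrained, which \cref{theorem:sub-opt} allows). Applying \cref{theorem:sub-opt} with this $m$ and the given $\delta$ produces a constant $C$ such that, with probability at least $1-1/n^\delta$, running \cref{algo:estimate} on $(X_i')$ yields an estimate $\muh'$ with $|\muh' - \mu/\sigma_{i^\star}| \le \tilde O\big((n/(i^\star)^4)^{1/2}\big)$ when $i^\star \le n^{1/4}$ and $|\muh' - \mu/\sigma_{i^\star}| \le \tilde O\big((n/(i^\star)^4)^{1/6}\big)$ when $i^\star \ge n^{1/4}$.

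Finally I would transfer the bound back. Because \cref{algo:estimate} is scale-invariant, its output $\muh$ on $(X_i)$ equals $\sigma_{i^\star}\,\muh'$ (translation-invariance is not needed here, since the error bound of \cref{theorem:sub-opt} is already independent of the common mean), hence $|\muh - \mu| = \sigma_{i^\star}\,|\muh' - \mu/\sigma_{i^\star}|$, which is at most $\sigma_{i^\star}\cdot\tilde O\big((n/(i^\star)^4)^{1/2}\big)$ or $\sigma_{i^\star}\cdot\tilde O\big((n/(i^\star)^4)^{1/6}\big)$ — exactly the claimed $\tilde O(\min(\cdots))$ by the choice of $i^\star$, with the same failure probability $1/n^\delta$ as in \cref{theorem:sub-opt}.

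There is essentially no hard step; the only point requiring care is the observation that the minimizing index $i^\star$ is a deterministic function of the variances, so \cref{theorem:sub-opt} is invoked at the single value $m=i^\star$ and \emph{no} union bound over $i$ is taken — a union bound would inflate the failure probability by a factor of $n$ and would force a larger exponent $\delta$. The only remaining bookkeeping is to confirm that $i^\star$ falls in the range for which the corresponding case of \cref{theorem:sub-opt} is stated, which holds because the corollary's minimum ranges over precisely those indices.
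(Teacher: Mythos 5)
Your proposal is correct and matches the paper's intended argument: the corollary is stated immediately after \cref{theorem:sub-opt} with only the remark that scale- and translation-invariance of \cref{algo:estimate} ``enables the closed-form,'' and your rescaling by $\sigma_{i^\star}$ (with $i^\star$ the deterministic minimizer, hence no union bound over $i$) is exactly the reduction being invoked. The one detail worth retaining explicitly is the observation you already made that $i^\star$ is fixed by the adversary's choice of variances, not data-dependent, so a single application of \cref{theorem:sub-opt} at $m=i^\star$ suffices with the same failure probability.
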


Our techniques also naturally extend to the $d >1$ dimensional setting,  resolving the implicit open problem of \cite{chierichetti2014learning}:   \emph{How large does the dimension $d$ need to be to nearly attain the error rate that would be achievable if the variances were known?}  We show that even when $d=2$, this known-variance rate can nearly be attained, improving upon the prior guarantee of \cite{chierichetti2014learning} that required $d = \Omega(\log(n))$:

\begin{restatable}{theorem}{theoremestmultiboth}\label{theorem:est-multi-both}

Consider observing $n$ 2-dimensional Gaussian samples with a common mean $X_i \sim N(\mu, \sigma_i^2 I)$, where $\sigma_1 \le \dots \le \sigma_n$, the variances are unknown to the algorithm, and samples are presented in an arbitrary order. With probability $1-o(1)$, \cref{algo:adjust-multi} attains error $\tilde{O}\left(\sqrt{\frac{1}{\sum_{i=2}^n \frac{1}{\sigma_i^2}}}\right)$.
\end{restatable}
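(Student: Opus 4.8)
\medskip
\noindent\textbf{Proof plan for \cref{theorem:est-multi-both}.}
The plan is to estimate the two coordinates of $\mu=(\mu_1,\mu_2)$ separately; by symmetry it suffices to recover $\mu_1$. The key structural fact in the entangled model is that the two coordinates of each $X_i$ are \emph{independent} given $\sigma_i$, which lets us use one coordinate purely to \emph{select} a favorable subset of the samples and the other purely to \emph{estimate} $\mu_1$ and to certify a confidence interval: conditioned on the selection, the estimation coordinate is a fresh collection of independent Gaussians with common mean $\mu_1$. Concretely, across a range of scales I would form candidate subsets $S$ using only the second coordinates — e.g. the $k$ samples whose second coordinates span the smallest window (equivalently, $k$ consecutive in second-coordinate sorted order), for each $k$, possibly further localized around a coarse estimate of $\mu_2$ obtained by running the one-dimensional algorithm of \cref{theorem:sub-opt} on the second-coordinate data. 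For each such $S$ I output the interval $I_S=[\,\widehat m_S\pm C\,\widehat s_S\,\polylog(n)/\sqrt{|S|}\,]$, where $\widehat m_S$ and $\widehat s_S$ are a robust location and a robust scale estimate (median/IQR or a trimmed variant, so that a few high-variance samples that leak into $S$ do no harm) of $\{X^{(1)}_i:i\in S\}$. The output estimate of $\mu_1$ is the midpoint of $\bigcap_S I_S$, symmetrically for $\mu_2$ (swapping the roles of the coordinates), with a union bound over the two coordinates and the $\poly(n)$ candidates.

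Two lemmas must then be proved. \emph{Validity}: with high probability every $I_S$ contains $\mu_1$. This is exactly where the conditional-independence structure is used — once the second coordinates are exposed, $S$ is fixed and $\{X^{(1)}_i:i\in S\}$ are independent $N(\mu_1,\sigma_i^2)$, so the sharp concentration of robust location/scale estimators for heteroskedastic Gaussians (the kind of analysis underlying \cref{theorem:sub-opt}, and developed in \cite{pensia2022estimating,devroye2023mean}) gives $|\widehat m_S-\mu_1|\le C\,\widehat s_S\,\polylog(n)/\sqrt{|S|}$ with the claimed probability for \emph{any} realized $S$; a union bound over the candidates and over the symmetric $\mu_2$-intervals then makes $\bigcap_S I_S$ a nonempty interval containing $\mu_1$. \emph{Accuracy}: some candidate $I_S$ has width $\tilde O\big(1/\sqrt{\sum_{i\ge2}1/\sigma_i^2}\big)$, so the (nonempty) intersection is at most this wide. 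To exhibit such an $S$, I would bucket the variances into $O(\log n)$ dyadic scales and let bucket $b^\star$, with $m^\star$ samples of variance $\approx 2^{2b^\star}$, be the one maximizing $m_b 2^{-2b}$, so that $m^\star 2^{-2b^\star}\ge\polylog(n)^{-1}\sum_{i\ge2}1/\sigma_i^2$, and take $|S|\approx m^\star$: the $m^\star$ bucket-$b^\star$ samples occupy a second-coordinate window of width $\le 2^{b^\star}\polylog(n)$ around $\mu_2$, so the selected $S$ is at least that tight, and one shows its robust first-coordinate scale is $O(2^{b^\star}\polylog(n))$, giving $|I_S|=O\big(2^{b^\star}\polylog(n)/\sqrt{m^\star}\big)=\tilde O\big(1/\sqrt{\sum_{i\ge2}1/\sigma_i^2}\big)$.

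The hard part will be making validity and accuracy hold \emph{simultaneously and uniformly} over all candidates — equivalently, controlling coincidental tight clusters among the high-variance samples. On one side, a coincidental cluster must not make any $I_S$ \emph{too narrow}: its robust scale $\widehat s_S$ must never badly underestimate the dispersion governing $\widehat m_S$, which is why $\widehat s_S$ should be a genuine IQR/trimmed-spread (which inflates automatically when $S$ is a mix of scales) rather than, say, the raw selection window. On the other side, such clusters must not \emph{crowd out} the genuinely low-variance samples at the target scale in the accuracy argument; handling this forces restricting to cluster sizes $|S|=\Omega(\polylog(n))$ (so $\binom{n}{|S|}$-type union bounds over subsets are affordable) and a scale-by-scale comparison between the number of low-variance samples at scale $2^{b^\star}$ and the number a larger bucket can deposit there by chance — and, when the low-variance samples are too few to form such a cluster, localizing candidates around the coarse estimate. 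The ``$\sum_{i=2}^n$'' rather than ``$\sum_{i=1}^n$'' is the shadow of precisely this: one arbitrarily-precise sample cannot be singled out and contributes negligibly to any robust aggregate, so the $\sigma_1$-dependent term is unattainable, and this is automatically respected by the construction. Everything else — Gaussian order-statistic estimates, bias/variance of trimmed means, and the dyadic bookkeeping — should be routine.
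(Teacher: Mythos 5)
There is a genuine gap in the estimator you apply after selection. Your plan filters on the second coordinate (correct and essentially the paper's idea) but then computes a \emph{median/IQR or trimmed} location estimate on the first coordinates of the selected subset $S$, building $I_S=[\hat m_S \pm C\hat s_S\polylog(n)/\sqrt{|S|}]$. That estimator is polynomially suboptimal in the heteroskedastic setting, even after filtering. The reason is quantitative: a sample with variance $\sigma_j^2$ passes a coordinate-2 filter of width $\approx 2^{b^\star}$ with probability $\approx 2^{b^\star}/\sigma_j$ — only one factor, because selection uses only one coordinate — and once in $S$ it contributes a first coordinate spread over the full $\sigma_j$. A concrete example: take $\sigma_i=1$ for $i\le n^{1/4}$ and $\sigma_i=i^{3/4}/n^{3/16}$ for $i>n^{1/4}$. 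Then $R(\sigma)=\Theta(n^{-1/8})$, and with the best choice of filter window $w$ (which turns out to be $w=\Theta(1)$), one has $|S|=\Theta(n^{7/16})$ while the first-coordinate density at $\mu_1$ is only $\Theta(n^{1/4})$, so the median of $S$ has error $\Theta(\sqrt{|S|}/\text{density})=\Theta(n^{-1/32})$, a polynomial factor worse than $R(\sigma)$. Simultaneously, your accuracy lemma (``its robust first-coordinate scale is $O(2^{b^\star}\polylog n)$'') is false here: the $m^\star$-tightest coordinate-2 cluster and any window of width $O(2^{b^\star}\polylog n)$ are both dominated by high-variance samples, so $\hat s_S$ is inflated far beyond $2^{b^\star}$, and $|I_S|\gg R(\sigma)$. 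References like \cite{pensia2022estimating,devroye2023mean} analyze median and modal estimators precisely to show their limits — they do not give the $\hat s_S/\sqrt{|S|}$ rate you need.

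What makes the paper's result work is exactly the extra ingredient your proposal omits: the balance test. After the coordinate-2 filter, the paper runs the finite-width $(w,\Delta,\mu)$-balance test in the estimation coordinate. The balance test counts only samples in a window around $\muh$, so a high-variance sample ``interferes'' only if it lands near $\mu$ in \emph{both} coordinates, which happens with probability $\approx(\sigma_i/\sigma_j)^2$; in the example above, the balance-test signal at scale $1$ is $\Theta(n^{1/4}\Delta)$ while the noise is $\Theta(n^{1/8})$, recovering $\Delta=\tilde O(n^{-1/8})=\tilde O(R(\sigma))$. That squaring is precisely what the median does not exploit, because the median is driven by a sign count over \emph{all} of $S$, not a count localized near $\mu$. (Note the median \emph{is} a degenerate balance test with $w=\infty$; the power comes from finite $w$.) Separately, your sketch leaves the regime where the number of good samples is $o(\polylog n)$ at the level of ``localizing around a coarse estimate,'' whereas the paper handles it with a distinct closest-pair step (\cref{algo:adjust-multi}); that part would also need to be filled in, but the primary gap is the use of a generic robust estimator in place of the balance test.
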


\subsection{Preliminaries}

Let $\cnt(l,r)$ denote the random variable corresponding to the number of samples with value $ \in [l,r]$. $f_\mathcal{D}(\cdot)$ is the density function of distribution $\mathcal{D}$. For $d=1$, in instances where we must refer to the samples in order of realized value, we refer to them by $Y_1 \le \dots \le Y_n$. Meaning, we realize $X_1, \dots, X_n$ with $X_i \sim N(\mu, \sigma_i^2),$ and observe $Y_1 \le \dots \le Y_n$ where the $Y_i$'s are the $X_i$'s sorted in non-decreasing order.

\section{Overview of Our Techniques}

In this section, we provide the high-level intuition for our approach and results, and describe the key lemmas that facilitate our analysis. Finally, we discuss how our approach and analysis can be furthered to attain results for multi-dimensional heteroskedastic mean estimation.

\subsection{Intuition and Existing Estimators}

As discussed earlier, mean estimation and even heteroskedastic mean estimation has been studied by a variety of prior works that leverage different algorithmic ideas. 
Here, we provide a brief overview to give  intuition into the challenges of the problem, and motivate our main algorithmic ideas. %

The two most basic estimators are the \emph{empirical mean} and the \emph{empirical median}.  Neither of these, however, adequately leverage the heterogeneity in the quality of samples in settings where some variances are significantly larger than others.  In the case of returning the empirical mean, $\frac{X_1 + \dots X_n}{n},$ even if all but one sample has variance $1$ and a single sample has arbitrarily large variance, the empirical mean also will have large variance.  While the \emph{median} of the $X_i$'s has some robustness to such settings, it also fails to leverage heterogeneity---this is especially easy to see in the fact that the median is blind to settings where $\ll \sqrt{n}$ samples have significantly smaller variance than the rest.  For example, suppose $X_1, \dots, X_{n^{1/2-\varepsilon}} \sim N(\mu,1)$ and $X_{n^{1/2-\varepsilon}}, \dots, X_n \sim N(\mu, \infty)$.  The median will incur unbounded expected error, while alternative algorithms, such as one that looks for the tightest cluster of $n^{1/2-\varepsilon}$ points and then takes the average of the cluster, would incur expected error of $\Theta(\frac{1}{\sqrt{n^{1/2 - \varepsilon}}})$.

These settings where there are a small number of very good samples motivate creating estimators that search for tightly-clustered sets of samples, and return a statistic of the samples in the cluster. This intuitively reflects that if there are few low-variance samples, we would prefer our estimate to rely almost purely on those good samples \emph{if we could identify them}. The \textbf{$k$-closest estimator}, and the \textbf{``modal''} estimator are two estimators that leverage this intuition.  The $k$-closest estimator looks at the $k$-closest points and returns their midpoint.  The ``modal'' estimator returns the value $\muh$ containing the most samples within $[\muh-w,\muh+w]$.  The parameters $k$ and $w$ are chosen so as to isolate an appropriate scale that focuses on the high-quality samples. As one might expect, these estimators are quite similar, and there is nearly a bijection between the $k$-closest estimator and the modal estimator with parameter $w = \argmin_{w} (\max_{\muh} \cnt(\muh-w,\muh+w) \ge k)$. These estimators have 
been at the core of the previously-best guarantees for heteroskedastic mean estimation. Despite this, their shortcomings are illustrated even in the homoskedastic case where all samples have equal variance: when all samples $X_1, \dots, X_n \sim N(\mu,1)$ there is no choice of $k$ or $w$ for which the modal or $k$-closest estimators yield expected error better than $\Theta(n^{-1/3})$ \cite{chernoff1964estimation,kim1990cube}, despite expected error $O(1/\sqrt{n})$ being achievable by the mean or median.\footnote{For variants of the $k$-closest estimator that return the mean or median of the $k$-closest points, rather than their midpoint, this can behave similarly to the mean or median for sufficiently large $k$, although they are still suboptimal in the heteroskedastic case.}

\subsection{A ``Balanced'' Modal Estimator}
At its core, our estimator behaves similarly to a modal estimator, that returns the estimate $\muh$ which maximizes the number of samples in the range $[\muh-w,\muh+w]$, with the additional condition that this range be ``balanced'' in the sense that the number of samples in the interval $[\muh-w, \muh]$ is approximately the same as the number of samples in the interval $[\muh, \muh+w].$  

Before discussing how $w$ is chosen, we describe the intuition for this balanced condition.  Returning to the homoskedastic case where all variances are 1, suppose we are trying to decide whether to return the true mean, $\mu$, versus a slightly offset version of it, $\mu + \Delta.$  The standard modal estimator with parameter $w=1$ is  trying to decide whether there is more probability mass in the interval $[\mu -1,\mu+1]$ versus the interval $[\mu -1 + \Delta,\mu+1 + \Delta]$. This depends on the difference between the mass in the intervals $[\mu - 1, \mu - 1 + \Delta]$ and $[\mu + 1, \mu + 1 + \Delta]$. The difference in expectation is roughly the derivative of the probability density function of the standard Gaussian, evaluated at $1$ times the \emph{square} of $\Delta$, namely  $O(\Delta^2 n)$, while the standard deviation of the difference is roughly $O(\sqrt{\Delta n})$. The signal of the true mean overpowers the variance when $\Delta \gg n^{-1/3}$, matching classical guarantees for the modal estimator.
 In contrast, when evaluating the balance condition at $\mu+\Delta,$ the relevant quantity is the difference between the densities in the intervals $[\mu + \Delta -1, \mu+\Delta]$ and $[\mu + \Delta, \mu+\Delta+1]$. The difference in expectation is roughly the difference in the standard Gaussian density in the interval $[\mu,\mu+\Delta]$ and the interval $[\mu + 1, \mu+1+\Delta]$. In particular, this quantity is \emph{linear} in the offset $\Delta$, as opposed to quadratic. We obtain a difference in expectation that is $O(\Delta n)$, while the standard deviation is $O(\sqrt{n})$. Hence, we can detect imbalance when $\Delta \gg n^{-1/2}$, yielding more accurate estimates that match the best guarantees for homoskedastic estimation.

This ``balanced'' modal estimator attains nearly-optimal error for homoskedastic mean estimation in a way that seems amenable to zooming into scales that would leverage heteroskedasticity, unlike the median or mean. We will see that  (perhaps surprisingly), this balanced modal estimator can also provide a near-optimal estimator from \emph{heteroskedastic} observations if the perfect width $w$ to use was known. To address this caveat that we do not know which width, $w$, to use, we propose a similarly-intuited approach we call the \emph{balance-finding algorithm}. Oversimplifying, this algorithm will enable us to accomplish something similar to looking for the information of the balanced modal estimator at multiple scales of $w$ simultaneously.

\textbf{Balance finding.} Our primary algorithmic technique is to search for the phenomenon of a particular kind of \emph{balance} that implies a high-probability confidence interval for the mean. We will look for such balance at many scales (similar to trying many values of $w$) and intersect our obtained confidence intervals to determine our final estimate. To illustrate this phenomenon, consider counting the number of samples that are slightly less than $\mu$, and the number of samples slightly larger than $\mu$. If we use ``slightly'' to mean within an interval of size $w$, we are considering $\cnt(\mu-w,\mu)$ and $\cnt(\mu,\mu+w)$ respectively (recall that $\cnt(l,r)$ denotes the number of samples within $[l,r]$). Naturally, as our density is symmetric, we expect $\cnt(\mu-w,\mu) \approx \cnt(\mu,\mu+w)$, meaning these terms are $\tilde{\Theta}(\sqrt{\cnt(\mu-w,\mu+w)})$ apart. For appropriately chosen $w$ and any estimate $\muh$, an observation that $\cnt(\muh-w,\muh)\approx\cnt(\muh,\muh+w)$ can be roughly interpreted as evidence that either $|\mu - \muh|$ is small, or that $[\muh-w,\muh+w]$ corresponds to a relatively flat region of the density curve.

This illuminates the desire to distinguish between estimates near $\mu$ and estimates far from $\mu$ but in flat regions of the density curve. Intuitively, in the case that our estimate is merely in a flat region, we expect to still see this balance if we perturb our estimate. More concretely, suppose we perturb our flat-region $\muh$ by a term $\Delta$, we still expect to see $\cnt((\muh+\Delta)-w,\muh+\Delta) \approx \cnt(\muh+\Delta,(\muh+\Delta)+w)$. On the other hand, we do not expect to see this balance when $\muh$ is near $\mu$. If we move our estimate $\Delta$ to the left then we expect to see many more samples to its right, or $\cnt((\muh-\Delta)-w,\muh-\Delta) \ll \cnt(\muh-\Delta,(\muh-\Delta)+w)$. Similarly, if we move the estimate $\Delta$ to the right we expect $\cnt((\muh+\Delta)-w,\muh+\Delta) \gg \cnt(\muh+\Delta,(\muh+\Delta)+w)$. This motivates searching for a meaningful type of balance, where the balance is not observed for the perturbed estimates, and thus resembling the case where $|\mu - \muh|$ is small. 

Finding balance can be defined with respect to the estimate $\muh$, the perturbation $\Delta$, the width $w$, and a confidence parameter that determines thresholds for $\approx,\ll,\gg$ as used above. In this section, assume the confidence parameter is defined such that the probability of ever finding a false-positive meaningful balance is inverse-polynomially small. We will then more precisely describe a balance as a $(w,\Delta,\muh)$-balance. We claim that, with high probability, there will be no $(\cdot,\Delta,\muh)$-balance where $|\mu - \muh| > \Delta$: yielding a confidence interval of $\mu \in [\muh - \Delta, \muh + \Delta]$. Accordingly, our strategy is to test many carefully-chosen tuples of $(w,\Delta,\muh)$-balance and intersect the confidence intervals we obtain. In \cref{algo:test}, we outline our subroutine for testing a $(w,\Delta,\muh)$-balance.

\begin{algorithm}[t]
    \caption{Testing $(w,\Delta,\muh)$-balance} \label{algo:test}
    \hspace*{\algorithmicindent} 
    \begin{flushleft}
      {\bf Input:} width $w$, shift $\Delta$, and potential mean $\muh$  \\
      {\bf Output:} PASS (it likely holds that the true mean $\mu \in [\muh-\Delta,\muh+\Delta]$), or FAIL (insufficient evidence or evidence against  $\mu \in [\muh-\Delta,\muh+\Delta]$)\\
      {\bf Description:} This test will PASS if the number of samples in the intervals $[\muh-w,\muh]$ and $[\muh,\muh+w]$ are approximately equal, yet after shifting these intervals by $\pm\Delta$ the halves become significantly unbalanced (evidencing a higher density of samples near $\muh$ versus $\muh \pm w$). 
    \end{flushleft}
    \begin{algorithmic}[1]
    
    \Procedure{Test}{$w,\Delta,\muh$}:
    \State $L_{\textrm{shift-right}} \gets \cnt(\muh + \Delta - w,\muh + \Delta)$ \Comment{Count samples within $[\muh + \Delta - w, \muh + \Delta]$.}
    \State $R_{\textrm{shift-right}} \gets \cnt(\muh+\Delta,\muh + \Delta + w)$  \Comment{Count samples within $[\muh+\Delta,\muh + \Delta + w]$.}
    \State $T_{\textrm{shift-right}} \gets \cnt(\muh+\Delta - w,\muh + \Delta + w)$ \Comment{Count samples within $[\muh+\Delta - w,\muh + \Delta + w]$.}
    
    \If{$L_{\textrm{shift-right}} - R_{\textrm{shift-right}} \le \sqrt{C_{\dfp} \log(n) T_{\textrm{shift-right}}}$ \textbf{or} $T_{\textrm{shift-right}} < C_{\dfp} \log(n)$} 
    
            \Return FAIL 
    \EndIf

    \State $L_{\textrm{shift-left}} \gets \cnt(\muh-\Delta - w,\muh - \Delta)$ \Comment{Count samples within $[\muh-\Delta - w,\muh - \Delta]$.}
    \State $R_{\textrm{shift-left}} \gets \cnt(\muh - \Delta,\muh - \Delta + w) $ \Comment{Count samples within $[\muh - \Delta,\muh - \Delta + w]$.}
    \State $T_{\textrm{shift-left}} \gets \cnt(\muh - \Delta - w,\muh - \Delta + w) $ \Comment{Count samples within $[\muh - \Delta - w,\muh - \Delta + w]$.}
    
    \If{$R_{\textrm{shift-left}} - L_{\textrm{shift-left}} \le \sqrt{C_{\dfp} \log(n) T_{\textrm{shift-left}}}$  \textbf{or} $T_{\textrm{shift-left}} < C_{\dfp} \log(n)$} 
    
            \Return FAIL \label{line:shift-left-fail} 
    \EndIf

    \Return PASS 

    \EndProcedure
    \end{algorithmic}
\end{algorithm}

What remains is to design an algorithm that tests the correct balances that yield sufficiently small and correct confidence intervals. Algorithmically, we remark that for a given $w$ and $\Delta$, we can use a sweep-line method to find all ranges of $\muh$ where there exists $(w,\Delta,\muh)$-balance in $\tilde{O}(n)$ time. Thus, we may obtain an $\tilde{O}(n)$ time algorithm if we can select $\tilde{O}(1)$ pairs of $(w,\Delta)$ to consider, and can show that testing just balances with these parameters will obtain our desired estimation error. While we do not fully motivate it until later, we provide our approach in \cref{algo:estimate}.

\begin{algorithm}[t]
    \caption{Estimation-Algorithm} \label{algo:estimate}
    \hspace*{\algorithmicindent} 
    \begin{flushleft}
      {\bf Input:} $Y_1 \le  \dots \le Y_n$ \\
      {\bf Output:} Range $\conf$ (can choose any arbitrary value in this range as the estimate $\muh$) \\
    \end{flushleft}
    \begin{algorithmic}[1]
    \Procedure{Sweep-Test}{$w,\Delta$}: \label{line:sweep}

    \Return $S_{w,\Delta}$
    \Comment{Returns set $S_{w,\Delta}$ of $O(n)$ intervals of $\muh$ that PASS $\operatorname{Test}(w,\Delta,\muh)$}
    \EndProcedure

    \Procedure{Generate-Tests}{$Y_1 \le \dots \le Y_n$}: \label{line:params}
    \State $\spa \gets \{ \infty \}$
    \For{$i \in [\lfloor \log(n) \rfloor]$}
        \State{$r_{2^i} \gets \min_{j} Y_{j + 2^i} - Y_j$}
        \Comment{$r_{2^i}$ is the gap between the closest $2^i$ samples}
        \For{$j \in \{ - \lceil C_{\dpa} \log(n) \rceil , \dots, \lceil C_{\dpa} \log(n) \rceil$}
            \State $\spa \gets \spa \cup r_{2^i} \cdot 2^j$
            \Comment{Approximating $\sigma_{2^i}$ by powers of 2 near $r_{2^i}$.}
        \EndFor
    \EndFor
    
    \Return $\spa$
    \Comment{Returns $\spa$, including $\infty$ and approximations of $\sigma_{2^i}$}
    \EndProcedure
    
    \Procedure{Estimation-Algorithm}{$Y_1 \le \dots \le Y_n$}:
    \State $\conf \gets [-\infty, \infty]$
    \Comment{Interval we are confident $\mu$ is within}
    \State $\spa \gets \operatorname{Generate-Tests(Y)}$
    \Comment{Determine values of $w,\Delta$}
    \For{$w,\Delta \in \spa$}
        \State $S_{w,\Delta} \gets \operatorname{Sweep-Test(w,\Delta})$ 
        \Comment {Values of $\muh$ that Pass $\operatorname{Test}(w,\Delta,\muh)$.}
        \If{$S_{w,\Delta} \ne \emptyset$}
            \State $\conf \gets \conf \cap \min_{\muh \in S_{w,\Delta}} [\muh - \Delta, \muh + \Delta]$
            \State $\conf \gets \conf \cap \max_{\muh \in S_{w,\Delta}} [\muh - \Delta, \muh + \Delta]$ \Comment{Intersect confidence intervals.}
        \EndIf
    \EndFor
    
    \Return $\conf$ \Comment{Can estimate $\muh$ as any arbitrary value in $\conf$.}
    \EndProcedure
    \end{algorithmic}
\end{algorithm}

\subsection{Analyzing Estimation Error}\label{sub:analyze-est}

\textbf{Near-optimal guarantees for simplified Subset-of-Signals.} We will now informally show that finding balance is sufficient for obtaining near-optimal guarantees in a simplified version of the Subset-of-Signals model where at least $m$ samples have $\sigma_i \le 1$, \emph{and the remaining samples all have the same value of $\sigma_i = \sigma^*$ (this additional assumption is only to permit a cleaner explanation here)}. More sophisticated techniques will later enable us to show the same guarantees for (unsimplified) Subset-of-Signals, and results for more general settings.

The existence of $(w,\Delta,\mu)$-balance will typically imply that our algorithm obtains $O(\Delta)$ error with high probability. This will follow from showing that: (i) with high probability there is no $(\cdot,\Delta',\muh)$-balance where $|\mu - \muh| > \Delta'$, and (ii) our algorithm will test sufficiently similar tuples that find a $(\cdot, \Delta', \cdot)$-balance with $\Delta' = O(\Delta)$. Accordingly, if there exists a $(w,\Delta,\mu)$-balance, then we expect our algorithm to find a balance yielding a correct confidence interval of width $O(\Delta)$ containing $\mu$. This motivates our focus on studying the conditions under which $(w,\Delta,\mu)$-balance exists:

\begin{infclaim}\label{claim:findit}
    $(w,\Delta,\mu)$-balance will exist with high probability if $\mathbb{E}[\cnt(\mu,\mu+\Delta) - \cnt(\mu+w,\mu+w+\Delta)]^2 \ge C_1 \log(n) \cdot \mathbb{E}[\cnt(\mu,\mu+w )]$.
\end{infclaim}

This follows from how the imbalance after shifting will be much larger than the standard deviation of the difference between correctly-balanced halves centered at $\mu$. We will use the simple condition of \cref{claim:findit} to obtain desired estimation error. As seen in \cref{fig:subset}, the optimal rate for Subset-of-Signals undergoes a phase transition at $m=n^{1/4}$. We obtain this rate up to logarithmic factors:

\begin{lemma}\label{lemma:toy-large-n}
    When $m \in [n^{1/4},n]$, with high probability there exists a $(w,\Delta,\mu)$-balance with $\Delta=\tilde{O}\left( \frac{n}{m^4} \right)^{1/6}$.
\end{lemma}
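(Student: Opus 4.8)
The plan is to apply \cref{claim:findit} by choosing the width $w$ and shift $\Delta$ appropriately and lower-bounding $\mathbb{E}[\cnt(\mu,\mu+\Delta) - \cnt(\mu+w,\mu+w+\Delta)]^2$ against $C_1 \log(n)\cdot \mathbb{E}[\cnt(\mu,\mu+w)]$. In this simplified Subset-of-Signals regime we have $m$ samples with $\sigma_i \le 1$ and the remaining $n-m$ with a common variance $(\sigma^*)^2$; since the algorithm is scale-invariant we may normalize so that the ``good'' samples are the dominant contributor at the scales of interest, and the key tension is that when $w \gg 1$ the good samples contribute a roughly \emph{uniform-looking} block (so they help the imbalance only through their total count being slightly larger near $\mu$), whereas at small $w$ they contribute a genuinely curved Gaussian profile. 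The natural choice is to take $w$ of constant order (say $w = \Theta(1)$, matching the scale of the good samples' standard deviations), so that the $m$ good samples contribute a difference in expectation to $\cnt(\mu,\mu+\Delta) - \cnt(\mu+w,\mu+w+\Delta)$ that is linear in $\Delta$: roughly $\Delta\cdot m\cdot(f(\mu) - f(\mu+w)) = \Theta(\Delta m)$ using the Gaussian density's decay over a constant window. Meanwhile the bad samples, having large variance, contribute a nearly flat profile and hence a negligible net difference (up to lower-order terms), but they do inflate $\mathbb{E}[\cnt(\mu,\mu+w)]$ by roughly $(n-m)\cdot \Theta(w/\sigma^*) $; when $\sigma^*$ is very large this contribution is itself small, and when $\sigma^*$ is moderate one instead argues that the relevant local count is dominated by the $\Theta(m)$ good samples plus at most $\Theta(n w/\sigma^*)$, which one bounds by $O(m)$ in the worst case that matters or otherwise subsumes into the analysis.

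Concretely, I would first record the density estimates: for $w = \Theta(1)$ and any $\Delta \le w$, $\mathbb{E}[\cnt(\mu,\mu+\Delta)] = \Delta\cdot\Theta(m) + \Delta\cdot\Theta(n/\sigma^*)$ and $\mathbb{E}[\cnt(\mu+w,\mu+w+\Delta)] = \Delta\cdot\Theta(m e^{-\Theta(w^2)}) + \Delta\cdot\Theta(n/\sigma^*)$, so that the bad-sample terms cancel to first order and
\[
\mathbb{E}\big[\cnt(\mu,\mu+\Delta) - \cnt(\mu+w,\mu+w+\Delta)\big] \;=\; \Delta\cdot\Theta(m) \;-\; O\!\left(\frac{\Delta^2 n}{(\sigma^*)^2}\right),
\]
where the error term comes from second-order curvature of the flat bad-sample profile over a window of width $\Delta$. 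Second, I would bound $\mathbb{E}[\cnt(\mu,\mu+w)] = O(m + nw/\sigma^*)$. Plugging into \cref{claim:findit}, the condition to satisfy is $\Delta^2 m^2 \gtrsim C_1\log(n)\cdot(m + n/\sigma^*)$ (absorbing the $w=\Theta(1)$), i.e. $\Delta \gtrsim \sqrt{C_1 \log(n)(m + n/\sigma^*)}/m$. The worst case over $\sigma^*$ is when $n/\sigma^*$ is as large as it can be while still not swamping the good-sample \emph{signal}; the binding constraint is that the bad samples do not overwhelm the imbalance, which forces $n/\sigma^* \lesssim$ something like $m^2$ (roughly: the flat-profile count at scale $w$ must be $O(m^2)$ for the $\Theta(\Delta m)$ signal to dominate its own $\sqrt{\cdot\log n}$ fluctuations after the analysis of \cref{claim:findit}), and substituting $n/\sigma^* = \Theta(m^2)$ gives $\Delta = \tilde\Theta(1/\sqrt{m})$. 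This is \emph{too large} — so the real content is that when $m \ge n^{1/4}$ one should also exploit a \emph{larger} width $w \gg 1$ to pick up more good samples relative to the bad ones, or equivalently use the closest-$2^i$-samples scale $r_{2^i}$: taking $w \approx r_{m} = \Theta(\sqrt{n/m^2}\cdot \text{(scale of good cluster)})$ so that essentially all $m$ good samples lie in $[\mu-w,\mu+w]$, the good-sample profile over this window is curved on scale $w$, contributing difference $\Theta(\Delta m / w)$ to the numerator while $\mathbb{E}[\cnt(\mu,\mu+w)] = \Theta(m + nw/\sigma^*)$; re-optimizing yields $\Delta = \tilde O((n/m^4)^{1/6})$ exactly when $n/\sigma^*$ saturates at the threshold $\Theta(m^2)$ for the $m\ge n^{1/4}$ regime. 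I would carry out this optimization carefully, checking that the chosen $w$ is one of the $\tilde O(1)$ widths the algorithm actually tests (a power of $2$ within a $\polylog$ window of some $r_{2^i}$), which it is by construction of \textsc{Generate-Tests}.

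The main obstacle I expect is pinning down the right regime of $\sigma^*$ that is simultaneously (a) the worst case for the estimator and (b) consistent with the Subset-of-Signals guarantee having a phase transition precisely at $m = n^{1/4}$ — in other words, arguing that for $m \ge n^{1/4}$ the adversary's best move is to set $\sigma^* = \Theta(n/m^2)$ and that this yields exactly $\Delta = \tilde\Theta((n/m^4)^{1/6})$, matching the claimed lower bound. This requires handling the competition between the linear-in-$\Delta$ signal from curvature of the good-sample profile over the window $w$, the quadratic-in-$\Delta$ error from the bad-sample profile's curvature, and the variance term $\sqrt{\log(n)\,\mathbb{E}[\cnt(\mu,\mu+w)]}$, all as functions of the two free parameters $w$ and $\Delta$; the optimization is a small but fiddly case analysis. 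A secondary, more technical obstacle is controlling the error terms in the density/count expectations (replacing integrals of the Gaussian density by their leading-order values over windows that may be $\Theta(1)$ or larger, and ensuring the $O(\Delta^2 n/(\sigma^*)^2)$ curvature term is genuinely lower-order for the chosen $\Delta$), but these are routine once the parameter regime is fixed. I would conclude by verifying that the $(w,\Delta,\mu)$-balance found is of the meaningful kind — that the shifted halves are imbalanced by more than $\sqrt{C_{\dfp}\log(n)\,T_{\text{shift}}}$ with high probability — which is immediate from the choice of $\Delta$ in \cref{claim:findit} together with a Bernstein/Chernoff bound on the shifted counts, and this is exactly the event the algorithm's \textsc{Test} subroutine checks.
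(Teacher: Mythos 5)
Your first ingredient matches the paper: the $(w,\Delta,\mu)$-balance with $w=\Theta(1)$, whose failure forces $\Delta \lesssim \sqrt{C_1\log(n)\,\Ex[\cnt(\mu-1,\mu+1)]}/m$, is exactly the paper's $(1,\Delta,\mu)$-balance. The gap is in the second ingredient. The paper pairs this with the \emph{median-like} $(\infty,\Delta,\mu)$-balance, which is good once $\Ex[\cnt(\mu-\Delta,\mu+\Delta)] \gtrsim \sqrt{C_1\log(n)\,n}$, and then chains the two via the elementary inequality $\Ex[\cnt(\mu-1,\mu+1)] \lesssim \Ex[\cnt(\mu-\Delta,\mu+\Delta)]/\Delta$ (the density is peaked at $\mu$). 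Failure of both balances then gives $\Delta \lesssim \sqrt{\log(n)\cdot\sqrt{\log(n)\,n}/(\Delta m^2)}$, i.e. $\Delta^3 \lesssim \tilde{O}(\sqrt{n}/m^2)$, which is precisely $\Delta = \tilde{O}((n/m^4)^{1/6})$. No explicit identification of the worst-case $\sigma^*$ is needed. Your proposal never produces this chaining; instead you assert an unjustified constraint $n/\sigma^* \lesssim m^2$ (which in fact is only the right threshold at the phase-transition point $m=n^{1/4}$; the true worst case is $\sigma^* = \Theta((n^2/m^2)^{1/3})$, i.e. $n/\sigma^* = \Theta(n^{1/3}m^{2/3})$), and the arithmetic around it is also off ($\sqrt{m+m^2}/m = \Theta(1)$, not $\tilde\Theta(1/\sqrt{m})$; and note $1/\sqrt{m} \le (n/m^4)^{1/6}$ throughout this regime, so calling it ``too large'' is backwards).

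The replacement you propose for the second balance — a large finite width $w\gg 1$ with good-sample signal $\Theta(\Delta m/w)$ — rests on an incorrect density calculation. The $m$ good samples have $\sigma_i\le 1$, so their profile is curved at scale $O(1)$ regardless of $w$: for $w\gg 1$ and $\Delta\le 1$ their contribution to $\Ex[\cnt(\mu,\mu+\Delta)-\cnt(\mu+w,\mu+w+\Delta)]$ stays $\Theta(\Delta m)$, while the noise term $\sqrt{\log(n)\,\Ex[\cnt(\mu-w,\mu+w)]} = \sqrt{\log(n)(m+nw/\sigma^*)}$ grows with $w$; so enlarging $w$ strictly hurts unless you exploit the \emph{bad} samples' own asymmetry, which (optimized at $w\approx\sigma^*$ or $w=\infty$) is exactly the median. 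Since the ``re-optimization yields $\tilde O((n/m^4)^{1/6})$'' step is asserted rather than carried out, and the mechanism it relies on is wrong, the proof as proposed does not go through; it would be repaired by replacing the large-$w$ discussion with the $(\infty,\Delta,\mu)$-balance and the $\Ex[\cnt(\mu-1,\mu+1)]\le O(\Ex[\cnt(\mu-\Delta,\mu+\Delta)]/\Delta)$ chaining step.
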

\begin{proof}
    We will consider evaluating two types of balance, and conclude that at least one of these balances must exist with the desired $\Delta$. 
    
    By \Cref{claim:findit}, we can find $(1,\Delta,\mu)$-balance if $(m \cdot \Delta)^2 \ge O(1) \cdot C_1 \log(n) \cdot \Ex[\cnt(\mu-1,\mu+1)]$. Meaning, if we do not find such balance, $\Delta \le O(1) \cdot \sqrt{\frac{C_1 \log(n) \Ex[\cnt(\mu - 1, \mu + 1)]}{m^2}}$.

    Intuitively, if this is an undesirable bound on $\Delta$, then $\Ex[\cnt(\mu - 1, \mu + 1)]$ must be large, meaning many of the $n-m$ samples of standard deviation $\sigma^*$ must be realized in $[-1,+1]$, and thus $\sigma^*$ must not be too large. In other words, either we are able to find balance from our $m$ ``good'' points, or our remaining $n-m$ ``bad'' points must not actually be too bad. For our other type of balance, we will notice how $(\infty,\Delta,\mu)$-balance behaves similarly to classical high-probability guarantees for the median. We will find such a balance if $\Ex[\cnt(\mu-\Delta,\mu+\Delta)] \ge O(1) \cdot \sqrt{C_1 \log(n) n}$.

    Combining both restrictions, if we cannot find either balance then $\Delta \le O(1) \cdot \sqrt{\frac{C_1 \log(n) \cnt(\mu - 1, \mu + 1)}{m^2}} \le O(1) \cdot \sqrt{\frac{C_1 \log(n) \Ex[\cnt(\mu - \Delta, \mu + \Delta)]}{\Delta m^2}} \le O(1) \cdot \sqrt{\frac{C_1 \log(n) \sqrt{C_1 \log(n) n}}{\Delta m^2}}$. This implies $\Delta \le O(1) \cdot (C_1^{3/2} \log^{3/2}(n))^{1/3} \cdot (\frac{n}{m^4})^{1/6} = O(\sqrt{\log(n)} \cdot (\frac{n}{m^4})^{1/6}) = \tilde{O}((\frac{n}{m^4})^{1/6}))$.
    
\end{proof}

\begin{lemma}\label{lemma:toy-small-n}
    When $m \in [C' \log(n),n^{1/4}]$, with high probability there exists a $(w,\Delta,\mu)$-balance with $\Delta=\tilde{O}\left( \frac{n}{m^4} \right)^{1/2}$.
\end{lemma}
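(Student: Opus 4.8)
The plan is to mirror the structure of the proof of \cref{lemma:toy-large-n}, again combining two types of balance. When $m \le n^{1/4}$, the ``good'' points alone are too few to give the $\sqrt{n/m^4}$ rate from a width-$1$ balance against the backdrop of the $n-m$ bad points (this is exactly the regime where the phase transition happens), so we must extract more from the structure: specifically, we should zoom into a \emph{finer} scale than $w=1$. The key observation is that if $\Ex[\cnt(\mu-1,\mu+1)]$ is large — say it is some value $N$ — then the $m$ good points of variance $\le 1$ are packed into an interval of width $O(1)$ together with roughly $N$ points total, so on the scale $w \approx m/N$ (the typical spacing such that a width-$w$ window around $\mu$ contains $\Theta(m)$ of the good points, or more precisely a width capturing a constant fraction of good samples) the density contributed by the good points is large. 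I would apply \Cref{claim:findit} with this smaller width $w$, for which the relevant expectation $\Ex[\cnt(\mu,\mu+w)]$ on the right-hand side is only $O(N \cdot w) = O(m)$ or so, while the imbalance signal on the left is driven by the derivative of the good-point density over a $\Delta$-shift.

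Concretely, the steps I would carry out: (1) Set $N \eqdef \Ex[\cnt(\mu-1,\mu+1)]$. If $N$ is small — say $N \le \mathrm{poly}\log(n) \cdot m^2 / (n/m^4)^{?}$, i.e.\ small enough that the width-$1$ balance of \Cref{claim:findit} already certifies $\Delta = \tilde O((n/m^4)^{1/2})$ — we are done via the width-$1$ balance exactly as in \cref{lemma:toy-large-n}. (2) Otherwise $N$ is large; choose $w = \Theta(m/N)$ so that $\Ex[\cnt(\mu,\mu+w)] = \Theta(m)$ (using that the density of the good points near $\mu$ is $\Theta(N)$ per unit length, since $\ge m$ of them lie in a width-$O(1)$ window around $\mu$ where the density is roughly flat and at least $\Omega(N)$). (3) Estimate $\Ex[\cnt(\mu,\mu+\Delta) - \cnt(\mu+w,\mu+w+\Delta)]$: over a shift of size $\Delta \ll w$, the difference in the good-point density between offset $0$ and offset $w$ is $\Theta(\Delta) \cdot |\text{(density derivative)}|$; since the good points' density falls from $\Theta(N)$ to essentially $0$ over a length $\Theta(1)$, its derivative is $\Theta(N)$ in the relevant region, giving an expected imbalance of $\Theta(N \Delta)$... and here I need to be a little careful, because I want the imbalance to \emph{grow} with the smaller width, so more precisely I should compare the good-point density at $\mu$ (which is $\Theta(N)$) with that at $\mu + w \ll 1$ (still $\Theta(N)$), so the contrast comes not from the good points' own flatness but from integrating against a window — I would instead track $\cnt(\mu,\mu+\Delta)$ which sees density $\approx N$ versus a window placed where the good-point contribution has dropped, i.e.\ take $w = \Theta(1)$ for the \emph{imbalance} but note we only need $T \approx \Ex[\cnt(\mu,\mu+w)] = \Theta(N)$... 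Let me restate: run \Cref{claim:findit} with $w = 1$, for which the condition reads $(m\Delta)^2 \ge C_1 \log n \cdot N$ exactly as before, AND additionally use an $(\infty,\Delta,\mu)$ median-type balance, which exists once $\Ex[\cnt(\mu-\Delta,\mu+\Delta)] \ge \Omega(\sqrt{C_1 \log n \cdot n})$. If neither holds: $\Delta^2 \le C_1 \log n \cdot N/m^2$ and $N \le 2\Delta \cdot \Theta(N) $ is useless, so instead bound $N = \Ex[\cnt(\mu-1,\mu+1)] \le \frac{1}{2\Delta}\Ex[\cnt(\mu-1,\mu+1)] \cdot 2\Delta$ — this is where I need the extra leverage: for $m \le n^{1/4}$ I believe the right move is to iterate the width-halving so that failing to find balance at scale $2^{-k}$ forces the density to be even more concentrated at the next scale, and after $O(\log n)$ such steps the concentration is so extreme that a balance must appear; each step squares (or otherwise amplifies) the relationship between $\Delta$ and the local count.

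The main obstacle, and the part I expect to require the most care, is exactly this geometric/recursive argument for why the $m \le n^{1/4}$ regime yields the $(n/m^4)^{1/2}$ exponent rather than the $(n/m^4)^{1/6}$ exponent: in the large-$m$ case two balances (width $1$ and width $\infty$) suffice and the arithmetic closes in one shot, but for small $m$ the two-balance argument only gives $(n/m^4)^{1/6}$, which is \emph{weaker} than the claimed bound when $m < n^{1/4}$ — wait, $(n/m^4)^{1/2} < (n/m^4)^{1/6}$ precisely when $n/m^4 < 1$, i.e.\ $m > n^{1/4}$, so for $m \le n^{1/4}$ we have $n/m^4 \ge 1$ and the $1/2$ exponent gives a \emph{larger} (weaker) bound, meaning small $m$ is the harder regime and we are allowed a worse rate. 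So the real content is showing we can always \emph{at least} achieve $\Delta = \tilde O((n/m^4)^{1/2})$. I would argue: either the width-$1$ balance certifies $\Delta \le O(1)\sqrt{C_1 \log n \cdot N / m^2}$ with $N \le n$, immediately giving $\Delta \le \tilde O(\sqrt{n}/m) = \tilde O((n/m^4)^{1/2} \cdot (m/1)^{?})$ — check: $\sqrt{n}/m$ versus $(n/m^4)^{1/2} = \sqrt n/m^2$, these differ by a factor $m$, so the crude bound $N \le n$ is too lossy by a factor $m^2$. Hence the crux is to bound $N = \Ex[\cnt(\mu-1,\mu+1)]$ by $\tilde O(m^2 \cdot \text{something})$ rather than $n$, which must come from the failure of a median-type ($w=\infty$) balance at a small scale $\Delta$: failing $(\infty,\Delta,\mu)$-balance gives $\Ex[\cnt(\mu-\Delta,\mu+\Delta)] \le \tilde O(\sqrt n)$, and if the density were roughly flat on $[\mu-1,\mu+1]$ this would give $N \le \tilde O(\sqrt n / \Delta)$; combined with $\Delta^2 \le \tilde O(N/m^2) \le \tilde O(\sqrt n /(\Delta m^2))$ this yields $\Delta^3 \le \tilde O(\sqrt n / m^2)$, i.e.\ $\Delta \le \tilde O((n/m^4)^{1/6})$ — still the wrong exponent. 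So genuinely a third, intermediate-width balance (or the recursive halving) is needed to get to $1/2$; I would introduce balances at widths $w = 2^{-k}$ for $k = 0,1,\dots,O(\log n)$, show that failing the balance at width $2^{-k}$ forces $\Ex[\cnt(\mu, \mu+2^{-k})] \ge \Omega(m \cdot 2^{-k} \cdot (\text{growing factor}))$, and track how this bootstraps; closing this recursion cleanly — picking the right potential function and verifying the base and inductive steps — is where the real work lies, and I would expect the final bound to emerge as $\Delta \le \tilde O((n/m^4)^{1/2})$ from summing the geometric improvements over the $O(\log n)$ scales.
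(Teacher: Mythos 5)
Your proposal does not arrive at a correct proof, and the main issue is that you never use the simplifying assumption under which this lemma lives (stated right before \cref{lemma:toy-large-n}): the $n-m$ ``bad'' samples all share a single variance $\sigma^*$. The paper's proof is two fixed balances, not a recursive scheme: it tests a width-$1$ balance at \emph{fixed} shift $\Delta=\tfrac12$, whose failure means $m^2 \lesssim \log n\cdot\Ex[\cnt(\mu-1,\mu+1)]$; using the $\sigma^*$-model this forces $\sigma^* \lesssim n\log n/m^2$, and then the median-type $(\infty,\Delta,\mu)$-balance holds at scale $\Delta \approx \sigma^*\sqrt{\log n/n} = \tilde O(\sqrt n/m^2) = \tilde O((n/m^4)^{1/2})$. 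You drop the $\sigma^*$ leverage entirely, so you can never pass from ``the width-$1$ density is large'' to ``the density is large all the way out to scale $(n/m^4)^{1/2}$,'' which is what the median balance needs when that scale exceeds $1$.

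There is also an arithmetic/modeling error in your variable-$\Delta$ width-$1$ computation. The signal $\Ex[\cnt(\mu,\mu+\Delta)-\cnt(\mu+1,\mu+1+\Delta)]$ from the $m$ good points is $\Theta(m\Delta)$ only for $\Delta \lesssim 1$ and saturates at $\Theta(m)$ for $\Delta>1$; in the small-$m$ regime the $\Delta$ you are solving for is $\ge 1$, so the failure inequality $\Delta^2 \lesssim N/m^2$ you write down is not valid there, and neither is the $\Delta^3 \lesssim \sqrt n/m^2$ conclusion you draw from it (this is precisely why \cref{lemma:toy-large-n} can use a variable-$\Delta$ width-$1$ balance while \cref{lemma:toy-small-n} must fix $\Delta=\tfrac12$). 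Finally, you are right that a more elaborate, multi-scale argument becomes necessary once the $\sigma^*$ assumption is removed — that is exactly what \cref{lemma:real-small-n} does — but the recursion you gesture at is not needed here and is not the route the paper takes even in the general case (which tracks per-bucket densities $\rho_j$ rather than halving widths), and you explicitly leave it unclosed.
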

\begin{proof}
    We will again consider evaluating a pair of balances, and conclude that at least one of these values must exhibit balance with the desired $\Delta$. 
    
    By \Cref{claim:findit}, we can find $(1,\frac{1}{2},\mu)$-balance if $m^2 \ge O(1) \cdot  C_1 \log(n) \cdot \Ex[\cnt(\mu-1,\mu+1)]$. Since our guarantees for $\Delta$ in this lemma are super-constant, finding this balance would be sufficient. If we do not find such balance, then $m^2 \le O(1) \cdot C_1 \log(n) \cdot \Ex[\cnt(\mu-1,\mu+1)] \le O(1) \cdot  C \log(n) \frac{n}{\sigma^*} \implies \sigma^* \le O(1) \cdot \frac{C n \log(n)}{m^2}$.

    Similar to \cref{lemma:toy-large-n}, our inability to find balance from the $m$ samples implies $\sigma^*$ cannot be too large. We will then find the median-like balance of $(\infty,\Delta,\mu)$-balance if $\Ex[\cnt(\mu-\Delta,\mu+\Delta)] \ge \sqrt{C_1 \log(n) n}$. Finally, this implies we find $(\infty,\Delta,\mu)$-balance for a $\Delta \le O(1) \cdot \frac{\sqrt{C_1 \log(n) n}}{\Ex[\cnt(\mu -1, \mu+1)]} \le O(1) \cdot \frac{\sigma^* \sqrt{\log(n)}}{\sqrt{n}} \le O(1) \cdot \frac{\sqrt{n} \log^{1.5}(n)}{m^2} = \tilde{O}((\frac{n}{m^4})^{1/2})$.

\end{proof}

Accordingly, one may obtain desired rates for simplified Subset-of-Signals by just testing the collection of tuples we discussed in the proofs of \cref{lemma:toy-large-n,lemma:toy-small-n}.

\textbf{Additional considerations.} We will need additional non-trivial considerations for proving our unsimplified results. Some include:

\emph{(Unsimplified) Subset-of-Signals. } If the $n-m$ remaining samples are allowed to have any value of $\sigma_i$, then checking just the tuples of balances in \cref{lemma:toy-large-n,lemma:toy-small-n} will not be sufficient to find the desired balance. This is roughly because there may be groups of $\sigma_i$ that interfere with balance at the scale of $1$, while still not helping produce a good median. With some nuance, we later show (i) there still must exist some scale at which to find desired balance, and (ii) we can choose a set of $\tilde{O}(1)$ tuples which will test something sufficiently close to discover said desired balance.

\emph{Choosing testing tuples. } The previous point touches on how we require some way of testing the correct collection of balances. Moreover, it would be desirable if our estimator was scale-invariant so that if $m$ samples have $\sigma_i \le \nu$, then we could attain the analogous Subset-of-Signals guarantee scaled by $\nu$. One may expect that if we are looking for balance driven by $k$ good samples, the correct $\Delta$ and $w$ to test may be within a polynomial factor of the distance between the $k$-closest points ($r_k$). Later, we will show it is sufficient to consider pairs of $w$ and $\Delta$ that are powers of $2$ and polynomially-close to a $r_{2^i}$ for $i \in [1,\log(n)]$, giving $\tilde{O}(1)$ tuples to test in a scale-invariant manner.

\subsection{Multi-Dimensional Estimation}\label{sec:over-high}
In this section, we focus on estimation with $d$-dimensional observations. Each $X_i \sim N(\mu,\Sigma_i)$, where $\mu$ is a $d$-dimensional vector and $\Sigma_i$ is a $d \times d$ covariance matrix. If each $\Sigma_i$ can be an arbitrary diagonal covariance matrix, then observations in different dimensions are unrelated and thus there is nothing possible beyond considering $d$ independent instances of 1-dimensional estimation. However, if $\Sigma_i = \sigma_i^2 I$, then each sample has the same variance in every dimension, and high dimensional observations are extremely helpful. \cite{chierichetti2014learning} initiated the study of this problem and obtained (in Theorem 5.2) an algorithm that with probability $1 - \Theta(1/n)$, it holds that $\Ex[|\muh_i - \mu_i|] = O \left( \sqrt{\frac{1}{\sum_{j=2}^n \frac{1}{\sigma_j^2}}} \right)$ when $d = \Omega(\log(n))$. Note how this quantity is exactly the error for estimation with known-variances, other than the removal of the term depending on $\sigma_1$. The crux of their approach leverages that with $d = \Omega(\log(n))$ dimensions, one can approximate $\sigma_i^2 + \sigma_j^2$ well for every pair of $i \ne j$. 

Interestingly, we will obtain similar guarantees while only requiring $d \ge 2$. We provide a high-level overview focusing on the most interesting case of $d=2$. Let us denote the known-variance error ignoring $\sigma_1$ as $R(\sigma) \triangleq  \sqrt{\frac{1}{\sum_{i=2}^n \frac{1}{\sigma_i^2}}} $. We note its relation to a simpler closed-form:

\begin{lemma}
    $\min_{2 \le i \le n} \frac{\sigma_i}{\sqrt{i}} \le \tilde{O}(R(\sigma))$. 
\end{lemma}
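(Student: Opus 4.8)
The plan is to recast the inequality in a scale-free form. Writing $R(\sigma)^2 = 1/\sum_{j=2}^n \sigma_j^{-2}$ and noting that $\min_{2\le i\le n}\sigma_i/\sqrt i = \big(\max_{2\le i\le n} i/\sigma_i^2\big)^{-1/2}$, the claim $\min_{2\le i\le n}\sigma_i/\sqrt i \le \tilde O(R(\sigma))$ is equivalent (after squaring) to
\[
\sum_{j=2}^n \frac{1}{\sigma_j^2} \;\le\; \tilde O(1)\cdot \max_{2\le i\le n}\frac{i}{\sigma_i^2}.
\]
So it suffices to bound the sum of the $\sigma_j^{-2}$ by a polylogarithmic factor times the single largest term of the form $i/\sigma_i^2$, and then take square roots at the end.

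To prove the displayed bound I would exploit that $\sigma_1\le\dots\le\sigma_n$, so $j\mapsto \sigma_j^{-2}$ is nonincreasing, together with a dyadic decomposition of the index set. Partition $\{2,\dots,n\}$ into the $O(\log n)$ blocks $B_k=\{j:2^k\le j<2^{k+1}\}$ for $k=1,\dots,\lfloor\log_2 n\rfloor$. Fix a block and let $j_k=\min B_k$, so $2^k\le j_k$ and $|B_k|\le 2^k$. By monotonicity $\sigma_j^{-2}\le \sigma_{j_k}^{-2}$ for every $j\in B_k$, hence
\[
\sum_{j\in B_k}\frac{1}{\sigma_j^2}\;\le\;\frac{2^k}{\sigma_{j_k}^2}\;\le\;\frac{j_k}{\sigma_{j_k}^2}\;\le\;\max_{2\le i\le n}\frac{i}{\sigma_i^2}.
\]
Summing over the $O(\log n)$ blocks gives $\sum_{j=2}^n \sigma_j^{-2}\le O(\log n)\cdot\max_{2\le i\le n} i/\sigma_i^2$, i.e. $\min_{2\le i\le n}\sigma_i^2/i\le O(\log n)\,R(\sigma)^2$, and taking square roots yields $\min_{2\le i\le n}\sigma_i/\sqrt i\le O(\sqrt{\log n})\,R(\sigma)=\tilde O(R(\sigma))$.

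There is essentially no real obstacle here: this is a bookkeeping argument, and the only point requiring care is that the dyadic grouping charges each block's total mass $2^k/\sigma_{j_k}^2$ to a single admissible term $i/\sigma_i^2$ with $i=j_k\in[2,n]$, so the index never drops to $1$ and $\sigma_1$ never enters (which matters since $R(\sigma)$ deliberately omits $\sigma_1$). A purely analytic variant (Abel summation / comparison with an integral) would also work but is strictly more cumbersome and gains nothing over the dyadic version.
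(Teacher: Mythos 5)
Your proof is correct and uses essentially the same argument as the paper: a dyadic decomposition of $\{2,\dots,n\}$, monotonicity of $j\mapsto\sigma_j^{-2}$ to bound each block by its smallest index, and summing over $O(\log n)$ blocks before taking a square root. The paper phrases the bound via the inequality $R(\sigma) \ge (\log n)^{-1/2}\min_{2\le 2^i\le n}\sigma_{2^i}/\sqrt{2^i}$ with the minimum restricted to powers of two, but this is the same dyadic bookkeeping you did, just written from the other direction.
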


Establishing this simpler closed-form as our goal, we sketch an approach based on balance-testing that may hope to obtain error near $\frac{\sigma_i}{\sqrt{i}}$:

\begin{itemize}
    \item Consider a guess for the mean $\muh = \muh_1,\muh_2$.
    \item Filter all $X_j$ whose observation in the first dimension is farther than $\sigma_i$ from $\muh_1$.
    \item With the filtered points in the second dimension, perform balance testing around $\muh_2$. 
\end{itemize}

Informally, consider how often a sample $X_j$ with large $\sigma_j$ would ``interfere'' with a balance test at the scale of $\sigma_i$ in the 1-dimensional setting: it would land in $[\mu - \sigma_i,\mu + \sigma_i]$ with probability $\Theta(\frac{\sigma_i}{\sigma_j})$. However, in the 2-dimensional setting, this probability is much smaller given our filtering, and is accordingly $\Theta\left(\left(\frac{\sigma_i}{\sigma_j}\right)^2\right)$. This difference will be enough to obtain known-variance rates. Algorithmically, we will try all $O(n^2)$ possible filterings, each creating an instance of 1-dimensional estimation, and we will intersect all the confidence intervals yielded from each instance to obtain an estimate. 

For some intuition regarding why we obtain known-variance rates, consider the case where $i^* = \argmin_{C' \log^2(n) \le i \le n} \frac{\sigma_{i}}{\sqrt{i}}$. We claim that (after some calculation)
the conditions of \cref{claim:findit} under which we expect to find balance 
are satisfied when $\Delta \ge \frac{\log(n)\sigma_{i^*}}{\sqrt{i^*}}$. Accordingly, there exists a $C'$ such that if $i^* \ge C' \log^2(n)$ then we obtain error $\tilde{O}(\frac{\sigma_{i^*}}{\sqrt{i^*}})$ with high probability. Handling other cases where $i^* < C' \log^2(n)$ involve other considerations that ultimately yield:

\theoremestmultiboth*

\section{Estimation Error Guarantees}

In this section, we will establish our core results and techniques in three main thrusts. First, in \cref{sub:formal}, we formally introduce the concept of balance.  In \cref{subsub:alg}, we discuss our algorithm.

Second, we will show that balance is well-behaved. In \cref{subsub:unif}, we prove uniform convergence bounds that imply (i) no false balance will exist with high probability, and (ii) all good balances will exist with high probability. Further, in \cref{subsub:flexible}, we show how balance is well-behaved with flexibility towards small perturbations of the testing tuple, and obtain that if there exists a good balance then our small set of testing tuples will also find a similar balance. In total, the well-behaved nature of balance will enable us to focus on just showing the existence of a desirable balance with high probability.

Third, we accordingly focus on showing the existence of desirable balance with high probability. In \cref{sub:exist}, we show high-probability existence for balance in the Subset-of-Signals model.

Finally, in \cref{sub:combine}, we combine these thrusts to prove the estimation guarantees of \cref{theorem:est-large-n,theorem:est-small-n}. We also include a note on how to remove parameters from the algorithm, at the cost of a slower running time.

\subsection{Formalizing Balance-Finding}\label{sub:formal}

In \cref{algo:test}, we formally define testing balance. Our definition includes a parameter $C_{\dfp}$ that intends to be set such that for any desired error probability $\frac{1}{n^{\dfp}}$, a particular balance where $|\mu - \muh| > \Delta$ will pass with probability as most $\frac{1}{n^{\dfp}}$.

We now show there exists a $C_{\dfp}$ that satisfies our desired property:

\begin{lemma}\label{lemma:no-false-pos}
    For any constant $\dfp$, there exists another constant $C_{\dfp}$ such that any particular $(w,\Delta,\muh)$-balance test where $|\mu - \muh|>\Delta$ will pass with probability $\le \frac{1}{n^{\dfp}}$.
\end{lemma}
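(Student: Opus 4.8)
The plan is to show that whenever $|\mu - \muh| > \Delta$, at least one of the two symmetric sub-checks of \cref{algo:test} returns FAIL except with probability $n^{-\dfp}$. First I would reduce to a single case: since the ``shift-left'' and ``shift-right'' halves of the test are mirror images of one another, assume $\mu < \muh - \Delta$ (the case $\mu > \muh + \Delta$ being identical after reflection), and argue that the shift-left check at \cref{line:shift-left-fail} is the one that fails. Write $L^- = [\muh-\Delta-w,\muh-\Delta]$, $R^- = [\muh-\Delta,\muh-\Delta+w]$, $J^- = L^-\cup R^-$, and for each sample set $Z_k = +1,-1,0$ according to whether $X_k$ lands in $R^-$, in $L^-$, or in neither, so that $R_{\textrm{shift-left}} - L_{\textrm{shift-left}} = \sum_k Z_k$ and $T_{\textrm{shift-left}} = \sum_k |Z_k|$ almost surely (the shared endpoint has probability $0$).

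The structural input I would establish next is a per-sample monotonicity: because each $N(\mu,\sigma_k^2)$ density is symmetric and unimodal about $\mu$, and the shared endpoint $\muh-\Delta$ lies strictly to the right of $\mu$, reflecting $R^-$ across $\muh-\Delta$ maps it into $L^-$ while only decreasing distance to $\mu$; hence $\Pr[X_k \in R^-] \le \Pr[X_k \in L^-]$ for every $k$, and in particular $\Pr[Z_k = +1 \mid Z_k \ne 0] \le \tfrac12$.

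Then I would condition on the random ``active set'' $A = \{k : Z_k \ne 0\}$. By independence of the $X_k$, conditioned on $A$ the signs $(Z_k)_{k\in A}$ are independent, so $R_{\textrm{shift-left}}$ is a sum of $|A|$ independent Bernoullis with means $\le\tfrac12$, while $T_{\textrm{shift-left}}=|A|$ and $\mathbb{E}[R_{\textrm{shift-left}}-L_{\textrm{shift-left}}\mid A]\le 0$. For the check to avoid FAIL one needs both $|A|\ge C_{\dfp}\log n$ and $R_{\textrm{shift-left}}-L_{\textrm{shift-left}} > \sqrt{C_{\dfp}\log(n)\,|A|}$; combining the second inequality with the nonpositive conditional mean yields a deviation $R_{\textrm{shift-left}} - \mathbb{E}[R_{\textrm{shift-left}}\mid A] > \tfrac12\sqrt{C_{\dfp}\log(n)\,|A|}$, which Hoeffding's inequality bounds by $\exp(-\tfrac12 C_{\dfp}\log n) = n^{-C_{\dfp}/2}$ uniformly over every $A$ with $|A|\ge C_{\dfp}\log n$. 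Averaging over $A$ (the event $|A|<C_{\dfp}\log n$ already forces FAIL), the entire test passes with probability at most $n^{-C_{\dfp}/2}$, so any $C_{\dfp}\ge 2\dfp$ suffices.

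I expect the only real subtlety to be the self-bounding nature of the threshold $\sqrt{C_{\dfp}\log(n)\,T_{\textrm{shift-left}}}$: the deviation we must control is measured in units of the random count $T_{\textrm{shift-left}}$ itself, so a naive one-shot concentration bound does not directly apply. Conditioning on the active set $A$ is exactly what neutralizes this — once $|A|$ is fixed the threshold is deterministic, the summands are genuinely independent, and the explicit FAIL clause $T_{\textrm{shift-left}} < C_{\dfp}\log n$ handles the small-$A$ regime for free. Everything else is a routine Chernoff/Hoeffding estimate together with the elementary symmetric-unimodal comparison.
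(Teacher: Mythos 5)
Your proposal is correct and follows essentially the same route as the paper: reduce WLOG to the shift toward $\mu$, condition to remove the self-referential randomness in the threshold $\sqrt{C_{\dfp}\log(n)\,T_{\textrm{shift-left}}}$, stochastically dominate the sign-difference by a sum of centered $\pm1$ variables (the paper invokes Rademachers after conditioning on $T_{\textrm{shift-left}}=T'$, whereas you condition on the active set $A$ and spell out the per-sample unimodality comparison $\Pr[X_k\in R^-]\le\Pr[X_k\in L^-]$), and finish with Hoeffding. Your active-set conditioning is a slightly more explicit justification of the dominance step the paper states tersely, but the decomposition and bound are the same.
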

\begin{proof}
    Without loss of generality, suppose $\muh > \mu + \Delta$. We will show that it is very likely the right half will not be sufficiently larger when shifting to the left (and thus would fail on \cref{line:shift-left-fail}). Intuitively, this is because $\Ex[L_{\textrm{shift-left}}] > \Ex[R_{\textrm{shift-left}}]$. Note how the balance test will fail on \cref{line:shift-left-fail} if $R_{\textrm{shift-left}} - L_{\textrm{shift-left}} \le \sqrt{C_{\dfp} \log(n) (L_{\textrm{shift-left}} + R_{\textrm{shift-left}})}$. Let $\textrm{Rad}(T)$ denote a random variable that is the sum of $T$ Rademacher random variables. Accordingly, the probability of passing is then bounded by:
    \begin{align}
        & \Pr\left[R_{\textrm{shift-left}} - L_{\textrm{shift-left}} > \sqrt{C_{\dfp} \log(n) T_{\textrm{shift-left}}} \right] \label{step:def-prob}\\ 
        & \le \max_{T'} \Pr\left[R_{\textrm{shift-left}} - L_{\textrm{shift-left}} > \sqrt{C_{\dfp} \log(n) T'} |  T_{\textrm{shift-left}}=T' \right] \\
        & \le \max_{T'} \Pr\left[\textrm{Rad}(T') > \sqrt{C_{\dfp} \log(n) T'}\right] \\
        & \le 2 \exp\left(\frac{-2 \cdot C_{\dfp} \log(n) T'}{4T'}\right) = 2 n^{-\frac{C_{\dfp}}{2}}
    \end{align}
    Thus, we get our desired guarantee of $2 n^{-\frac{C_{\dfp}}{2}} \le n^{-\dfp}$ which can be attained by $C_{\dfp} = 2 \dfp + 2$ when $n\ge2$.
\end{proof}

We similarly aim to define conditions under which a desirable balance will fail with probability at most $\frac{1}{n^\dfn}$.

\begin{definition}\label{def:goodness}
    A tuple $(w,\Delta,\mu)$ is $C_1$-good if it satisfies $\Ex[\cnt(\mu-w,\mu+w)]\ge C_1 \log(n)$ and $\Ex[\cnt(\mu,\mu+\Delta)-\cnt(\mu + w, \mu + w + \Delta)] \ge \sqrt{C_1 \log(n) \Ex[\cnt(\mu-w,\mu+w)]}$.
\end{definition}

In terms of these conditions:
\begin{lemma}\label{lemma:no-false-neg}
    For any constants $\dfn,C_{\dfp}$, there exists a constant $C_1$ such that for any $(w,\Delta,\mu)$-balance that is $C_1$-good, it will fail with probability $\le \frac{1}{n^{\dfn}}$.
\end{lemma}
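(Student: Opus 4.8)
The plan is to prove the statement directly: fix a tuple $(w,\Delta,\mu)$ that is $C_1$-good and bound the probability that $\operatorname{Test}(w,\Delta,\mu)$ returns FAIL. First, exploit symmetry: since every $X_i$ is symmetric about $\mu$, the reflection $y\mapsto 2\mu-y$ preserves the joint law of the samples and maps the interval $[\muh-\Delta-w,\muh-\Delta]$ to $[\muh+\Delta,\muh+\Delta+w]$ (when $\muh=\mu$), and similarly for the other two pairs, so $(L_{\textrm{shift-left}},R_{\textrm{shift-left}},T_{\textrm{shift-left}})$ has the same joint distribution as $(R_{\textrm{shift-right}},L_{\textrm{shift-right}},T_{\textrm{shift-right}})$. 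Hence the FAIL on \cref{line:shift-left-fail} and the analogous shift-right FAIL are triggered with equal probability, and it suffices to show $\operatorname{Test}$ does not FAIL on the shift-right check with probability $\ge 1-\tfrac{1}{2n^{\dfn}}$, i.e. that both $L_{\textrm{shift-right}}-R_{\textrm{shift-right}}>\sqrt{C_{\dfp}\log(n)\,T_{\textrm{shift-right}}}$ and $T_{\textrm{shift-right}}\ge C_{\dfp}\log(n)$.

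The heart of the argument is a deterministic comparison of expectations. Writing each shift-right count as a sum of independent indicators over the $n$ samples, I would establish the per-sample inequality $\Pr[X_i\in[\mu+\Delta-w,\mu+\Delta]]-\Pr[X_i\in[\mu+\Delta,\mu+\Delta+w]]\ge \Pr[X_i\in[\mu,\mu+\Delta]]-\Pr[X_i\in[\mu+w,\mu+w+\Delta]]$: decomposing the first length-$w$ interval at $\mu$ and the second at $\mu+w$ — valid precisely in the regime $\Delta\le w$ in which \cref{algo:estimate} invokes the test — reduces this to the elementary fact that for a symmetric unimodal density centered at $\mu$, a window of fixed width $w-\Delta$ carries more mass the closer it sits to $\mu$. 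Summing over $i$ gives $\Ex[L_{\textrm{shift-right}}-R_{\textrm{shift-right}}]\ge \Ex[\cnt(\mu,\mu+\Delta)-\cnt(\mu+w,\mu+w+\Delta)]\ge \sqrt{C_1\log(n)\,\Ex[\cnt(\mu-w,\mu+w)]}$ by $C_1$-goodness. Two further monotonicity facts complete the bookkeeping: a width-$2w$ window is heaviest when centered at $\mu$, so $\Ex[T_{\textrm{shift-right}}]=\Ex[\cnt(\mu+\Delta-w,\mu+\Delta+w)]\le\Ex[\cnt(\mu-w,\mu+w)]$, and trivially $\Ex[L_{\textrm{shift-right}}-R_{\textrm{shift-right}}]\le\Ex[T_{\textrm{shift-right}}]$; combined with $\Ex[\cnt(\mu-w,\mu+w)]\ge C_1\log(n)$ from the first goodness condition, these yield $\Ex[T_{\textrm{shift-right}}]\ge C_1\log(n)$ and $\Ex[L_{\textrm{shift-right}}-R_{\textrm{shift-right}}]^2\ge C_1\log(n)\,\Ex[T_{\textrm{shift-right}}]$.

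With the means controlled, the rest is routine concentration. A multiplicative Chernoff bound on $T_{\textrm{shift-right}}$ (a sum of independent indicators with mean $\ge C_1\log n$) shows that, outside an event of probability $n^{-\Omega(C_1)}$, we have $C_{\dfp}\log(n)\le T_{\textrm{shift-right}}\le 2\Ex[T_{\textrm{shift-right}}]$ once $C_1\ge 2C_{\dfp}$; on that event $\sqrt{C_{\dfp}\log(n)\,T_{\textrm{shift-right}}}\le\sqrt{2C_{\dfp}/C_1}\cdot\Ex[L_{\textrm{shift-right}}-R_{\textrm{shift-right}}]\le\tfrac12\Ex[L_{\textrm{shift-right}}-R_{\textrm{shift-right}}]$ provided $C_1\ge 8C_{\dfp}$. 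Finally $L_{\textrm{shift-right}}-R_{\textrm{shift-right}}$ is a sum of $n$ independent $[-1,1]$-valued variables whose total variance is at most $\Ex[T_{\textrm{shift-right}}]$, so Bernstein's inequality gives $\Pr[L_{\textrm{shift-right}}-R_{\textrm{shift-right}}\le\tfrac12\Ex[L_{\textrm{shift-right}}-R_{\textrm{shift-right}}]]\le\exp(-\Omega(\Ex[L_{\textrm{shift-right}}-R_{\textrm{shift-right}}]^2/\Ex[T_{\textrm{shift-right}}]))\le n^{-\Omega(C_1)}$. A union bound over these $O(1)$ events, together with the reflection symmetry, bounds the total FAIL probability by $n^{-\Omega(C_1)}$, which is $\le n^{-\dfn}$ for $C_1$ chosen as a large enough constant multiple of $\dfn+C_{\dfp}$; the constants are picked in the order $C_{\dfp}\to C_1$.

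The one non-mechanical step I expect to be the main obstacle is the per-sample expectation inequality relating the imbalance $L_{\textrm{shift-right}}-R_{\textrm{shift-right}}$ that the test actually measures to the idealized imbalance $\cnt(\mu,\mu+\Delta)-\cnt(\mu+w,\mu+w+\Delta)$ appearing in \cref{def:goodness}: getting the interval decompositions and the unimodality comparison exactly right — and observing that it relies on $\Delta\le w$ — is where the care is needed. Everything downstream is a standard Chernoff/Bernstein computation.
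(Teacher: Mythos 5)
Your proposal is essentially correct and follows the same strategy as the paper's proof—reduce to one shift, control the expectations, then apply routine concentration—but you make explicit a step that the paper's proof elides, and your treatment there is actually the more careful of the two. In the paper's chain of inequalities, the step $\Ex[D] \ge \sqrt{C_1\log(n)\,\Ex[\cnt(\mu-w,\mu+w)]}$ is asserted directly, where $D = \Rls - \Lls$; but the second goodness condition in \cref{def:goodness} is stated for $\Ex[\cnt(\mu,\mu+\Delta)-\cnt(\mu+w,\mu+w+\Delta)]$, a different quantity. Bridging these two requires exactly the reflection-symmetry plus interval-decomposition plus unimodality comparison that you spell out, and that argument does indeed hinge on $\Delta \le w$ (for $\Delta > w$ the decomposition at $\mu$ and at $\mu+w$ fails and the inequality can reverse). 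The concentration bookkeeping differs only in routine ways: the paper conditions on $\Tls$, passes to the $0/1$-valued $Z(t)$, and applies a Chernoff-type bound, whereas you use Chernoff on $T$ plus Bernstein on the signed imbalance; either works once the expectation inequalities are in hand.

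One small correction to your justification of the $\Delta \le w$ restriction: it is not true that \cref{algo:estimate} only ever invokes the test with $\Delta \le w$—\textsc{Generate-Tests} produces a set $\spa$ and the algorithm loops over \emph{all} pairs $w,\Delta \in \spa$, including those with $\Delta > w$. What makes the restriction harmless is that \cref{lemma:no-false-neg} is only ever \emph{applied} (via \cref{lemma:real-large-n,lemma:real-small-n,lemma:log1-bal,lemma:perturb-params}) to $C_1$-good balances with $\Delta \le w$ or $w = \infty$; tests run with $\Delta > w$ that happen to fail cost nothing, and any that pass still yield valid confidence intervals by \cref{lemma:no-false-pos}. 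If you wish to state \cref{lemma:no-false-neg} without a $\Delta \le w$ hypothesis, you should either add that hypothesis explicitly (it is harmless for the downstream use), or verify the expectation comparison separately in the $\Delta > w$ regime—your current argument does not cover it.
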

\begin{proof}
    We will separately show that each shift will fail with probability at most $\frac{1}{2 n^{\dfn}}$, and then by union bound can conclude our desired result. Without loss of generality, consider the left shift. We will consider our realization in two stages: (i) we realize $\Tls$, and then (ii) we realize $\Rls - \Lls$ by the sum of $\Tls$ variables over $\{\pm 1 \}$.

    We begin by showing that $\Tls$ will concentrate within a factor of $2$ with probability $1 - \frac{1}{4 n^{\dfn}}$, by Chernoff bound. Let us define this event as $E_{T}$:
    \begin{align}
        & \Pr[E_{T}] = 1 - \Pr[\Tls > 2 \Ex[\Tls] ] - \Pr[\Tls < \frac{1}{2} \Ex[\Tls]] \\
        & \ge  1- \Pr[|\Tls - \Ex[\Tls] | \ge \frac{1}{2} \Ex[\Tls]] \ge 2 \exp(- \Ex[\Tls] / 12) \\
        & \ge 1-  2 \exp( - C_1 / 12 \log(n)) = 2 n^{-C_1 / 12}
    \end{align}

    This is at least $1 - \frac{1}{4 n ^{\dfn}}$ when $C_1 \ge 12 \dfn + 24$ and $n \ge 2$. Now, we will show the balance fails the left shift with probability at most $\frac{1}{2 n^{\dfn}}$. Let $D$ be a random variable corresponding to the sum of $n$ i.i.d. random variables that are $+1$ if it corresponds to the sample being in the left range, $-1$ if the sample corresponds to being in the right range, and $0$ otherwise. Additionally, let $D_{\textrm{nonzero}(t)}$ denote the sum of $t$ i.i.d. random variables that are identical to the variables of $D$ conditioned on being nonzero. Note how $\Rls - \Lls = D = D_{\textrm{nonzero}}(\Tls)$. Finally, let $Z(\Tls)$ be the transformation of $D_{\textrm{nonzero}}(\Tls)$ from $\pm 1$ to $0/1$, meaning each $-1$ is mapped to $0$ and $1$ is mapped to $1$. Accordingly, $D_{\textrm{nonzero}}(\Tls) = 2 Z(\Tls) - \Tls$. Then:

    \begin{align}
        & \Pr[\Rls - \Lls \le \sqrt{C_{\dfp} \log(n) \Tls}] \\
        & = \Pr[D \le  \sqrt{C_{\dfp} \log(n) \Tls}] \\ 
        & \le \Pr[E_T] + \max_{\frac{1}{2} \Ex[\Tls] \le t \le 2 \Ex[\Tls]} \Pr[D \le \sqrt{C_{\dfp} \log(n) \Tls} | \Tls = t] \label{step:def-neg-prob}\\
        & = \Pr[E_T] + \max_{\frac{1}{2} \Ex[\Tls] \le t \le 2 \Ex[\Tls]} \Pr[D_{\textrm{nonzero}}(t) \le \sqrt{C_{\dfp} \log(n) t}] \\
        & = \Pr[E_T] + \max_{\frac{1}{2} \Ex[\Tls] \le t \le 2 \Ex[\Tls]} \Pr[ 2 Z(t) - t \le \sqrt{C_{\dfp} \log(n) t}] \\
        & = \Pr[E_T] + \max_{\frac{1}{2} \Ex[\Tls] \le t \le 2 \Ex[\Tls]} \Pr[ Z(t) \le \frac{\sqrt{C_{\dfp} \log(n) t} + t}{2}] \\
        & \le \Pr[E_T] + \max_{\frac{1}{2} \Ex[\Tls] \le t \le 2 \Ex[\Tls]} \Pr[|Z(t) - \Ex[Z(t)]| > \sqrt{C_2 C_{\dfp} \log(n) t}] \label{step:use-big-exp} \\ 
        & = \Pr[E_T] + \max_{\frac{1}{2} \Ex[\Tls] \le t \le 2 \Ex[\Tls]} \Pr[|Z(t) - \Ex[Z(t)]| > \frac{\sqrt{C_2 C_{\dfp} \log(n) t}}{\Ex[Z(t)]} \Ex[Z(t)]]  \\
        & \le \Pr[E_T] + \max_{\frac{1}{2} \Ex[\Tls] \le t \le 2 \Ex[\Tls]} \Pr[|Z(t) - \Ex[Z(t)]| > \frac{\sqrt{C_2 C_{\dfp} \log(n)}}{\sqrt{t}} \Ex[Z(t)]]  \\ 
        & \le \Pr[E_T] + \max_{\frac{1}{2} \Ex[\Tls] \le t \le 2 \Ex[\Tls]} 2 \exp\left( -\Ex[Z(t)] \left( \frac{\sqrt{C_2 C_{\dfp} \log(n)}}{\sqrt{t}} \right)^2 / 3 \right)\\
        & \le \Pr[E_T] + \max_{\frac{1}{2} \Ex[\Tls] \le t \le 2 \Ex[\Tls]} 2 \exp\left(  -\frac{t}{2} \cdot \left( \frac{\sqrt{C_2 C_{\dfp} \log(n)}}{\sqrt{t}} \right)^2 / 3 \right)\\
        & \le \Pr[E_T] + \max_{\frac{1}{2} \Ex[\Tls] \le t \le 2 \Ex[\Tls]} 2\exp\left(  -C_2 C_{\dfp} \log(n) / 6 \right) \\
        & = \Pr[E_T] + 2 n^{-\frac{C_2 C_{\dfp}}{6}} \\
        & \le \frac{1}{4 n ^{\dfn}}+ 2 n^{-\frac{C_2 C_{\dfp}}{6}}
    \end{align}

    This quantity is bounded by $\frac{1}{2 n^{\dfn}}$ if $C_2 = \frac{6 \cdot (\dfn + 3)}{C_{\dfp}}$ and $n \ge 2$. \cref{step:use-big-exp} holds by noting how $\Ex[Z(t)] = \frac{\Ex[D_{\textrm{nonzero}}(t)] + t}{2}$ and how:
    
    \begin{align}
        & \Ex[D_{\textrm{nonzero}}(t)] \\
        & \ge \frac{1}{2} \Ex[D] \\
        & \ge \frac{1}{2} \sqrt{C_1 \log(n) \Ex[\cnt(\mu-w,\mu+w)]}  \\
        & \ge (1 + 2 \sqrt{C_2}) \sqrt{C_{\dfp} \log(n) \cdot 4 \cdot \Ex[\cnt(\mu-w,\mu+w)]}  \label{step:sub-c1}\\
        & \ge (1 + 2 \sqrt{C_2}) \sqrt{C_{\dfp} \log(n) \cdot 2 \cdot \Ex[\Tls]} \\
        & \ge (1 + 2 \sqrt{C_2}) \sqrt{C_{\dfp} \log(n) \cdot t}
    \end{align}
    \cref{step:sub-c1} holds for sufficiently large $C_1$ where $C_1 \ge 16 (1 + 2\sqrt{C_2})^2 C_{\dfp}$. Thus, we fail the left shift with probability at most $\frac{1}{2 n^{\dfn}}$, and by union bound with the right shift we fail the balance test with probability at most $\frac{1}{n^{\dfn}}$ as desired.
\end{proof}

Accordingly, we have formalized our balance test, and provided lemmas that configure parameters to have desired false positives and false negatives under particular conditions.

\subsubsection{Balance-Finding Algorithm}\label{subsub:alg}

Our algorithm will involve testing many collections of $(w,\Delta,\muh)$ balances. For a given $w$ and $\Delta$, we design an sweep-line algorithm that tests all values of $\muh$, and returns the ranges of $\muh$ for which the balance test passes. 

Without loss of generality, consider determining the values of $\muh$ for which the left shift passes the $(w,\Delta,\muh)$ balance test. We will compute this with a sweep-line approach, and start with $\muh = -\infty$. At this point, every sample $X_i$ is contributing to none of $\Tls,\Lls,\Rls$. When $\muh = X_i - w - \Delta$, it starts contributing to $\Tls$ and $\Lls$, then at $\muh = X_i - \Delta$, it swaps its contribution to $\Tls$ and $\Rls$, and finally at $\muh = X_i - \Delta + w$ it no longer contributes to anything. Accordingly, as we sweep from left to right, there are $O(1)$ events to process for each $X_i$. Between events, the values of $\Tls,\Lls,\Rls$ remain constant and thus whether the balance test passes also remains the same. This gives us $O(n)$ events to process for evaluating whether both shifts pass as we sweep, and accordingly an $O(n \log(n))$ time algorithm that returns at most $O(n)$ intervals corresponding to the values of $\muh$ for which the $(w,\Delta,\muh)$ balance test passes. This corresponds to \cref{line:sweep} of \cref{algo:estimate}.

Moreover, if there are no false positives, we use the existence of a $(w,\Delta,\muh)$ balance to yield confidence that $\mu \in [\muh - \Delta, \muh + \Delta]$. Our final algorithmic aspect will be to select the values of $w$ and $\Delta$ for which to use our sweep-line testing. Later, we will observe that it is sufficient to try values of $w$ and $\Delta$ that are either $\infty$ or approximately some $\sigma_{2^i}$. While we are not told the value of $\sigma_{2^i}$, we can show that with high probability it is within a polynomial factor of the gap between the closest $2^i$ samples, and thus we can approximate this by one of $O(\log(n))$ powers of $2$ near the gap between the closest $2^i$ samples. This corresponds to \cref{line:params} of \cref{algo:estimate}.

We outline this entire approach in \cref{algo:estimate}. The algorithm runs in $\tilde{O}(n)$ time because we enumerate over $\tilde{O}(1)$ configurations of parameters and test each configuration with our sweep-line algorithm in $\tilde{O}(n)$ time.  We additionally note how $\operatorname{Generate-Tests}$ does provide a correct approximation of each $\sigma_{2^i}$ with high probability:

\begin{lemma} \label{lemma:pows-ok}
    For any constant $\dpa$, there exists a constant $C_{\dpa}$ such that $\spa$ contains a value $x \in [\sigma_{2^i},2 \sigma_{2^i}]$ for every $i$ with probability at least $1 - \frac{1}{n^{\dpa}}$.
\end{lemma}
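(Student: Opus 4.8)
The plan is to reduce \cref{lemma:pows-ok} to a deterministic dyadic-covering fact plus one high-probability two-sided polynomial bound. First I would show that if we establish, simultaneously for all $i \in [\lfloor \log n\rfloor]$, the sandwich
\[
    \frac{\sigma_{2^i}}{n^{c}} \;\le\; r_{2^i} \;\le\; n\,\sigma_{2^i}
\]
for a constant $c = c(\dpa)$, then the lemma follows by arithmetic alone: the target interval $[\sigma_{2^i}, 2\sigma_{2^i}]$ has endpoint ratio exactly $2$, so it contains a number of the form $r_{2^i} \cdot 2^{j}$ for an integer $j$, and the sandwich forces $|j| = O(c \log n)$, which is at most $C_{\dpa}\log n$ once $C_{\dpa}$ is a large enough multiple of $c$. (The degenerate case $\sigma_{2^i} = 0$ forces $r_{2^i} = 0$, and then $0 = r_{2^i}\cdot 2^{0} \in \spa$ lies in $[\sigma_{2^i}, 2\sigma_{2^i}] = \{0\}$, so assume $\sigma_{2^i} > 0$.) Here I read $r_k$ as the width spanned by the $k$ mutually-closest samples, as described in \cref{algo:estimate}.

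For the upper bound I would use Gaussian concentration: the $2^i$ samples of smallest variance all have standard deviation at most $\sigma_{2^i}$, so a standard tail bound and a union bound over these $\le n$ samples place all of them within $O(\sqrt{\log n}\,\sigma_{2^i})$ of $\mu$ except with probability $n^{-\Omega(1)}$ (the exponent tunable via the tail-bound constant). On that event a window of width $O(\sqrt{\log n}\,\sigma_{2^i}) \le n\sigma_{2^i}$ contains these $2^i$ samples, so $r_{2^i} \le n\sigma_{2^i}$.

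For the lower bound I would argue by contradiction with a union bound. Suppose $r_{2^i} < w_i := \sigma_{2^i}/n^{c}$, so some window of width $w_i$ contains $2^i$ samples. At most $2^i - 1$ of the $n$ samples have variance strictly below $\sigma_{2^i}^2$, so at least one sample $X_{j_1}$ in that window has $\sigma_{j_1} \ge \sigma_{2^i}$; and since $2^i \ge 2$ there is a second sample $X_{j_2}$ in the window, giving $|X_{j_1} - X_{j_2}| < w_i$. It therefore suffices to bound the probability that some index $j_1$ with $\sigma_{j_1} \ge \sigma_{2^i}$ and some $j_2 \ne j_1$ satisfy $|X_{j_1} - X_{j_2}| < w_i$. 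Fixing $j_1$ and conditioning on the other $n-1$ samples, the conditional probability that $X_{j_1}$ lands within $w_i$ of one of their (now fixed) values is at most $(n-1)\cdot \tfrac{2 w_i}{\sigma_{j_1}\sqrt{2\pi}} \le 2 n^{1-c}$, using that the $N(\mu, \sigma_{j_1}^2)$ density is at most $\tfrac{1}{\sigma_{j_1}\sqrt{2\pi}} \le \tfrac{1}{\sigma_{2^i}\sqrt{2\pi}}$. Union-bounding over the $\le n$ choices of $j_1$ gives probability $\le 2 n^{2-c}$ for a given $i$. I would then union bound the upper- and lower-bound failure events over the $\le \log n$ values of $i$ and pick $c$ (hence $C_{\dpa}$) a large enough constant depending on $\dpa$ so the total failure probability is at most $n^{-\dpa}$.

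The main obstacle is the lower bound, specifically the scenario where as many as $2^i - 1$ samples have negligible variance and thus effectively coincide at $\mu$: one cannot then argue directly that ``no narrow window is heavily populated.'' The resolution, and the only genuinely nonobvious point, is that the $2^i$-th sample in any such narrow window must be one of the ``large-variance'' samples ($\sigma_j \ge \sigma_{2^i}$), and that a large-variance sample landing within $w_i$ of any fixed point---in particular of the cluster near $\mu$---is polynomially rare, controlled by the bounded density $1/(\sigma_j\sqrt{2\pi}) \le 1/(\sigma_{2^i}\sqrt{2\pi})$.
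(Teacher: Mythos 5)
Your proposal is correct and takes essentially the same route as the paper: reduce to a two-sided polynomial sandwich on $r_{2^i}$ relative to $\sigma_{2^i}$, bound $r_{2^i}$ from above by placing the $2^i$ low-variance samples near $\mu$, and bound it from below by observing that a too-tight window would force a sample with $\sigma_j \ge \sigma_{2^i}$ to land polynomially close to another sample, an event controlled by a density bound after conditioning on the remaining samples. The only cosmetic difference is that the paper uses Chebyshev for the upper bound where you use Gaussian tails; both yield the needed polynomial gap.
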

\begin{proof}
    Note how this claim holds if every $r_{2^i}$ is within a factor of $n^{C_{\dpa}}$ of $\sigma_{2^i}$. 
    
    \underline{$r_k$ is not too small.} For a particular $r_k$ to be too small, it must be the case that at least one of $X_k, \dots, X_n$ is within $\frac{\sigma_k}{n^{C_{\dpa}}}$ of another sample. To bound the probability of such an event for an $X_j$, consider this event as $E_{j-\textrm{close}}$, and we will realize the other samples first and then just consider the probability that $X_j$ is realized within $\frac{\sigma_{k}}{n^{C_{\dpa}}}$ of any other sample. This attains $\sum_{k \le j \le n} \Pr[E_{j-\textrm{close}}] \le n \cdot O\left(\frac{n \cdot \frac{\sigma_k}{n^{C_{\dpa}}}}{\sigma_j}\right) \le O\left(\frac{1}{n^{C_{\dpa}-2}}\right)$. If we union bound over all values of $k$, the probability of any $r_k$ being too small is bounded by $\frac{1}{2 n^{\dpa}}$ if $C_{\dpa} > \dpa + 4$ and $n \ge 2$.

    \underline{$r_k$ is not too large. } For any $r_k$ to be too large, it must be the case that at least one of $X_i$ is farther than $\frac{1}{2} \cdot \sigma_i \cdot n^{C_{\dpa}}$ from the mean. By Chebyshev's inequality and union bound, this probability is bounded by $\frac{4}{n^{2 C_{\dpa} - 1}}$. Thus, this probability is bounded by $\frac{1}{2 n^{\dpa}}$ if $C_{\dpa} > \frac{\dpa}{2} + 2$ and $n \ge 2$.

    Accordingly, by union bound on both cases, we obtain our desired guarantee.
\end{proof}

\begin{corollary} \label{corr:pows-ok}
    For any constant $\dpa$, there exists a constant $C_{\dpa}$ such that $\spa$ contains a value $x \in [n^{c} \cdot \sigma_{2^i},2 n^{c} \cdot \sigma_{2^i}]$ for every $i$, and $-2 \le c \le 2$ with probability at least $1 - \frac{1}{n^{\dpa}}$.
\end{corollary}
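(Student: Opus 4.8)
The plan is to bootstrap directly from \cref{lemma:pows-ok}, or more precisely from the event isolated inside its proof. Recall that the proof of \cref{lemma:pows-ok} actually establishes the stronger statement that, simultaneously for all $i$, the empirical gap $r_{2^i}$ between the closest $2^i$ samples lies within a multiplicative factor $n^{C'}$ of the true scale $\sigma_{2^i}$, for a constant $C'$ depending only on the target failure exponent (this is exactly what the two cases ``$r_k$ is not too small'' and ``$r_k$ is not too large'' deliver, with probability at least $1-n^{-\dpa}$ once $C'$ is large enough). I will invoke this event verbatim; crucially, no new randomness is introduced, so the whole corollary will follow from the same event with only a larger choice of $C_{\dpa}$.

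Conditioned on that event the argument is deterministic. By construction $\spa$ contains the geometric progression $\{\, r_{2^i}\cdot 2^j : -\lceil C_{\dpa}\log n\rceil \le j \le \lceil C_{\dpa}\log n\rceil \,\}$, whose consecutive ratio is exactly $2$ and whose span contains $[\, r_{2^i}\, n^{-C_{\dpa}},\ r_{2^i}\, n^{C_{\dpa}} \,]$. Combining with $r_{2^i}\in[\sigma_{2^i} n^{-C'},\sigma_{2^i} n^{C'}]$, this progression covers the interval $[\, \sigma_{2^i}\, n^{-(C_{\dpa}-C')},\ \sigma_{2^i}\, n^{C_{\dpa}-C'}\,]$. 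Now fix any $i$ and any $c\in[-2,2]$, and consider the target window $I=[\, n^c\sigma_{2^i},\ 2n^c\sigma_{2^i}\,]$, which has multiplicative width exactly $2$. Taking $C_{\dpa}\ge C'+3$ guarantees $I\subseteq[\sigma_{2^i} n^{-(C_{\dpa}-C')},\sigma_{2^i} n^{C_{\dpa}-C'}]$ for $n\ge 2$ (using $n^{C_{\dpa}-C'}\ge n^3\ge 2n^2$ and $n^{-(C_{\dpa}-C')}\le n^{-3}\le n^{-2}$). Since the progression has consecutive ratio $2$ and spans an interval containing $I$, and $I$ itself spans a factor of $2$, some element of the progression lands in $I$: letting $g$ be the largest progression element with $g\le n^c\sigma_{2^i}$, its successor $2g$ satisfies $n^c\sigma_{2^i}<2g\le 2n^c\sigma_{2^i}$, hence $2g\in I$. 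As this used only the (deterministic consequences of the) single event from \cref{lemma:pows-ok}, it holds simultaneously for all $i$ and all $c\in[-2,2]$ with probability $\ge 1-n^{-\dpa}$, which is the claim.

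The only thing requiring care — and it is genuinely minor — is the bookkeeping of constants: one takes the constant produced inside the proof of \cref{lemma:pows-ok} (renamed $C'$ above), enlarges it additively so that the geometric progression in $\spa$ is simultaneously \emph{wide} enough to cover the factor-$\Theta(n^4)$ range swept out as $c$ ranges over $[-2,2]$ (together with the extra factor-$2$ window width) and \emph{fine} enough (consecutive ratio $2$) that every such width-$2$ window catches a progression point. Since no new probabilistic estimate is needed, I do not expect any real obstacle beyond making the final choice of $C_{\dpa}$ consistent with the constant coming out of \cref{lemma:pows-ok}.
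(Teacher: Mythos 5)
Your proof is correct and matches the paper's intent: the paper's one-line proof (``invoke \cref{lemma:pows-ok} and add $2$ to $C_{\dpa}$'') is exactly the argument you spell out, namely that enlarging $C_{\dpa}$ widens the geometric progression $\{r_{2^i}\cdot 2^j\}$ enough to cover the factor-$n^{\pm 2}$ shifted targets while the factor-$2$ mesh still catches a point in every width-$2$ window. Your bookkeeping ($C_{\dpa}\ge C'+3$ versus the paper's $+2$) is a harmless constant discrepancy arising from the extra factor of $2$ in the target window; otherwise the reasoning is identical.
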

\begin{proof}
    This follows immediately from invoking \cref{lemma:pows-ok}, and adding $2$ to the obtained $C_{\dpa}$.
\end{proof}

\subsection{Well-Behaved Properties of Balance}

We have now defined an algorithm that tests many balances and has proven conditions under which it has desirable false-positives and false-negatives for a particular test. In \cref{subsub:unif}, we show uniform convergence guarantees that let us bound false-positives and false-negatives for the infinite collection of balances our sweep-line tests. 
In \cref{subsub:flexible}, we show that multiplicatively perturbing $w,\Delta$ does not dramatically affect the conditions of a balance test. 

\subsubsection{Uniform convergence-like guarantees} \label{subsub:unif}

Our algorithm will test many balances, and our hope is that with high probability all tests will be correct. For our confidence intervals to be valid, all $(\cdot,\Delta',\muh)$-balance with $\mu \notin [\muh-\Delta',\muh+\Delta']$ must fail the balance test. Similarly, if we expect to see particular balances with high probability, we hope to witness such balances among the ones we test. Given that our sweep-line algorithm tests an infinite collection of tuples, and that our tuples are not chosen independently of the samples\footnote{This issue could be resolved by splitting the data in half, although this is unnecessary and uniform convergence will allow for a simpler algorithm.}, we elect to show uniform convergence guarantees. 

Let us consider how a particular sample affects a test for $(w,\Delta,\mu)$-balance. How the sample affects the evaluation of the left shift is purely a function of whether it is in $(\mu-\Delta-w,\mu-\Delta)$, $(\mu-\Delta,\mu-\Delta+w)$, or neither. Similarly, how the sample affects the evaluation of the right shift is purely a function of whether it is in $(\mu+\Delta-w,\mu+\Delta)$, $(\mu+\Delta,\mu+\Delta+w)$, or neither. In total, there are at most 9 possibilities\footnote{This is a loose upper bound, but the looseness will not affect our results.} for how a sample affects testing the balance, and the outcome for the test is purely a function of the number of samples of each possibility. Accordingly, testing a $(w,\Delta,\mu)$-balance can be viewed as applying a function $f: \mathcal{X} \rightarrow \mathcal{Y}$ to each $X_i$ where $|\mathcal{Y}|=9$, and then evaluating a function $V: (\mathcal{X},\mathcal{Y})^n \rightarrow \{0,1\}$. Similar to traditional uniform convergence guarantees, our goal is to show that the function class $\mathcal{F}$ has limited expressiveness in a way that is helpful for generalization guarantees.

For each of the $9$ elements in the range of a balance-testing function $f$, its preimage must be a contiguous range of the sorted inputs. This implies a strong condition on the possible labels of $\mathcal{Y}$:

\begin{condition}[$k$-representative labeling]
A function class $\mathcal{F}: \mathcal{X} \rightarrow \mathcal{Y}$ satisfies $k$-representative labeling if for any $X \in \mathcal{X}^n$ and $f \in \mathcal{F}$, there exists a subset $L_f \subseteq [n], |L_f|\le k$ such that any $g \in \mathcal{F}$ where $f(X_i)=g(X_i) \, \forall i \in L_f$, must satisfy $f(X_i)=g(X_i) \, \forall i \in [n]$. 
\end{condition}

In other words, this condition means that knowing the label of a particular $k$ elements in $X$ must determine the labels for all remaining points. Function classes that correspond to a collection of functions for testing balance must satisfy this property:

\begin{claim}
    Suppose $\mathcal{F}$ is a function class corresponding to the functions of a subset of balance testing tuples. Then, $\mathcal{F}$ satisfies $18$-representative labeling.
\end{claim}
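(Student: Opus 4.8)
The plan is to show that each of the (at most) $9$ labels assigned by a balance-testing function $f$ has a preimage consisting of a contiguous block of the sorted samples $Y_1 \le \dots \le Y_n$, and that the labeling is therefore determined by the $O(1)$ ``boundary'' indices at which the label changes. Concretely, fix a tuple $(w,\Delta,\muh)$ and recall that the label of a sample $X_i$ records (i) which of $(\muh-\Delta-w,\muh-\Delta)$, $(\muh-\Delta,\muh-\Delta+w)$, or neither it falls into (its effect on the left shift), and (ii) which of $(\muh+\Delta-w,\muh+\Delta)$, $(\muh+\Delta,\muh+\Delta+w)$, or neither it falls into (its effect on the right shift). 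Each of these two coordinates is determined by which interval of the real line — chopped at the four points $\muh\pm\Delta-w$, $\muh\pm\Delta$ (and $\muh\pm\Delta+w$) — the value $X_i$ lies in. Since the label is a function only of the value $X_i$, and the real line is partitioned into at most a constant number of intervals by these breakpoints, the preimage of each label is an interval of $\mathbb{R}$, hence a contiguous range of indices among the sorted $Y_j$'s.

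First I would make this precise: for a single shift (say the left shift), the three breakpoints $\muh-\Delta-w < \muh-\Delta < \muh-\Delta+w$ partition $\mathbb{R}$ into $4$ intervals, and the left-shift coordinate of the label is constant on each. The sorted samples therefore break into at most $4$ contiguous groups for the left-shift coordinate, i.e.\ the left-shift labeling is determined once we know, among the sorted samples, the $3$ indices where it changes — equivalently, knowing the left-shift label of the (at most) $3+\text{(number of groups)}\le$ a small constant number of ``witness'' samples straddling each breakpoint pins down the whole left-shift labeling. The cleanest accounting: the left-shift labeling is a monotone step function of the sorted index with at most $3$ steps, so it is uniquely determined by its values at $\le 9$ well-chosen indices (three per breakpoint, to witness which samples are on each side); being generous, $9$ indices suffice for the left shift. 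Symmetrically, $9$ indices suffice for the right shift, and their union — at most $18$ indices — determines the full label of every sample. I would then take $L_f$ to be this set of at most $18$ indices: if $g \in \mathcal{F}$ agrees with $f$ on $L_f$, then $g$'s left-shift and right-shift step functions agree with $f$'s at every step location, hence everywhere, so $g(X_i) = f(X_i)$ for all $i$. This is exactly $18$-representative labeling.

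The main obstacle — really a bookkeeping subtlety rather than a genuine difficulty — is pinning down the constant. One must be careful that a ``step location'' in the sorted sequence is witnessed by index data that $g$ also respects: knowing only that $f$ places samples $Y_a, Y_{a+1}$ on opposite sides of a breakpoint forces $g$ to do the same only if $g$'s breakpoints are also sorted-monotone, which they are because $g$ is itself a balance-testing function with its own ordered breakpoints. So the argument is: $f$'s restriction to any single shift is an element of a function class of ``at most $3$-step monotone labelings of a sorted array,'' which is shattered by $O(1)$ indices, and I would just verify $9$ per shift is a safe bound (three indices to witness each of the three breakpoint crossings, plus the labels are then forced by monotonicity). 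Summing the two shifts gives $18$. I expect the write-up to be short: state that preimages are intervals, invoke monotonicity of the induced step function, and count. No concentration or probabilistic input is needed here — this is a purely combinatorial structural claim about the function class, which is precisely what makes it the right hook for the uniform-convergence argument in \cref{subsub:unif}.
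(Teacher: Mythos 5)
Your per-shift decomposition is a cleaner route than the paper's direct preimage argument, but both carry the same genuine gap. The flaw is that the per-shift label ``neither'' (the sample falls outside the shift window) does \emph{not} have a contiguous preimage on the real line: for the left shift it is $(-\infty,\muh-\Delta-w]\cup[\muh-\Delta+w,\infty)$. Your ``hence everywhere'' step tacitly assumes otherwise. Knowing $f$ at indices straddling $f$'s transitions cannot pin down $g$, because $g$ can introduce transitions that $f$ does not have, strictly inside a long $N$-run of $f$. Concretely: take $Y_i=i$ and let $f$ be a balance test whose two shift windows lie far to the right of all samples, so $f$ assigns every sample the ``neither'' pair and has no transitions. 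Then for \emph{any} $L_f\subseteq[n]$ with $|L_f|\le 18$ there is some $j\notin L_f$, and one can choose a balance test $g$ whose left-shift window contains $Y_j$ and no other sample, with its right-shift window disjoint from all samples. This $g$ agrees with $f$ on $L_f$ yet $g(Y_j)\ne f(Y_j)$, so no constant-size witness set works and $18$-representative labeling fails for the shift-effect label space as defined. The paper's ``first and last of each preimage'' proof breaks on the same example, because the preimage of $(N,N)$ is disconnected.

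The repair is to refine the label so preimages are genuinely intervals: label each sample by which of the $\le 7$ cells carved out by the six breakpoints $\muh\pm\Delta,\ \muh\pm\Delta\pm w$ it lies in, so the $N$-prefix and $N$-suffix carry distinct labels. For a fixed $f$ at most $7$ labels occur, so taking first and last of each occupied cell gives $|L_f|\le 14\le 18$, and the ``preimage is an interval'' reasoning is now correct; the downstream $V$ in \cref{lemma:k-rep} still factors through the refined label because the cell determines the shift effect. In your framing, the per-shift cell index (one of $1,2,3,4$) is genuinely nondecreasing along the sorted samples, which is the correct replacement for your ``monotone labeling'' phrasing; the shift-effect sequence $N,L,R,N$ is not itself monotone precisely because $N$ recurs at both ends. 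Without a refinement of this kind, the write-up---yours and the paper's---does not establish the claim.
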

\begin{proof}
    As mentioned earlier, the preimage of any element in the domain of $f \in \mathcal{F}$ must be a contiguous range of the sorted samples. For any $y \in \mathcal{Y}$ with non-empty preimage, let us add the first and last element of its preimage to our subset. This will determine the remaining labels because any unlabelled point must be between two samples with the same evaluation of $f$, and since the preimage is a contiguous range, this sample must have the same evaluation of $f$ as those two samples. In total, our label set $|L_f| \le 2 \cdot |\mathcal{Y}| \le 18$.
\end{proof}

We will now build towards a lemma that lets us conclude something informally like the following:

\begin{center}
\emph{Consider a set $\mathcal{F}_{bad}$ of functions for balance tests such that, with high probability, a particular $f \in \mathcal{F}_{bad}$ would not satisfy the balance even if $18$ samples were adversarially modified. Then, with high probability, none of the $f \in \mathcal{F}_{bad}$ will satisfy balance.}
\end{center}

As $|\mathcal{F}_{bad}|$ may be infinite, our result achieves something that an immediate union-bound does not attain. We will be able to show our balance-testing satisfies the claims of this conclusion we are building towards, as well as a similar result for a set of $\mathcal{F}_{good}$ functions corresponding to balance tests satisfying \cref{claim:findit}. First, let us state our general lemma that powers this goal:

\begin{lemma}\label{lemma:k-rep}
    Suppose we have a function class $\mathcal{F}: \mathcal{X} \rightarrow \mathcal{Y}$ satisfying $k$-representative labeling. Additionally, let $Y_{\nu}$ denote all modifications of $Y$ that change at most $\nu$ entries, and consider a function $V: \mathcal{Y}^n \rightarrow \mathcal{U}$. If $X_i$ are mutually independent, then:

    \begin{align*}
        & \Pr_X[\exists f \in \mathcal{F} s.t. V(f(X))=u] \le (n |\mathcal{Y}|)^{k} \cdot \max_{f \in \mathcal{F}} \Pr_X[V(f(X)_{k})=u]
    \end{align*}
\end{lemma}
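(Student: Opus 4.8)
The plan is to fix a single $X$, identify the finite set of "possible $f$-behaviors" that can be realized via the $k$-representative labeling condition, union-bound over this finite set, and then argue that each term in the union is controlled by the stated max over $\mathcal{F}$ with $k$ adversarial corruptions. The point is that although $\mathcal{F}$ may be infinite, the $k$-representative labeling condition says the entire labeling $f(X)$ is determined by the restriction $f|_{L_f}$ to a set of at most $k$ indices, so the number of distinct labelings $\{f(X) : f \in \mathcal{F}\}$ is at most the number of ways to choose which $k$ indices form $L_f$ and what labels they get — that is, at most $\binom{n}{k}|\mathcal{Y}|^k \le (n|\mathcal{Y}|)^k$.

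Concretely, here is the sequence of steps I would carry out. First I would note that $L_f$ in the condition is allowed to depend on $f$ (and on $X$), so I cannot just fix one $k$-subset; instead I would enumerate over all pairs $(S, \tau)$ where $S \subseteq [n]$ with $|S| \le k$ and $\tau : S \to \mathcal{Y}$ is a candidate partial labeling. For each realized $f \in \mathcal{F}$, the pair $(L_f, f|_{L_f})$ is one such pair, and by $k$-representative labeling the full labeling $f(X)$ is the unique extension of $\tau = f|_{L_f}$ consistent with membership in $\mathcal{F}$ (if an extension exists at all). So the event $\{\exists f \in \mathcal{F} : V(f(X)) = u\}$ is contained in the union, over all $(S,\tau)$, of the events $\{$there is $f \in \mathcal{F}$ with $L_f = S$, $f|_S = \tau$, and $V(f(X)) = u\}$. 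The number of pairs is at most $\sum_{j=0}^k \binom{n}{j}|\mathcal{Y}|^j \le (n|\mathcal{Y}|)^k$ (I'd want $n|\mathcal{Y}| \ge 1$, which is fine).

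The core of the argument, and the step I expect to be the main obstacle, is the per-pair bound: for a fixed $(S,\tau)$, I must show $\Pr_X[\exists f \in \mathcal{F}, L_f = S, f|_S = \tau, V(f(X)) = u] \le \max_{f \in \mathcal{F}} \Pr_X[V(f(X)_k) = u]$, where $f(X)_k$ denotes $f(X)$ with up to $k$ entries adversarially altered. The key observation is that on the event in question, the labels $f(X_i)$ for $i \notin S$ are a deterministic function of the labels $f(X_i)$ for $i \in S$ together with the structure of $\mathcal{F}$ — but crucially, for a fixed $f$, the map $X_i \mapsto f(X_i)$ is just a fixed function, so $f(X)$ restricted to $[n] \setminus S$ is $(f(X_i))_{i \notin S}$, a function of $(X_i)_{i \notin S}$ only. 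So I would fix any one witness $f^* \in \mathcal{F}$ compatible with $(S,\tau)$ (if none exists the probability is zero), condition on $(X_i)_{i \notin S}$, observe that the coordinates of $f^*(X)$ outside $S$ are already determined, and note that changing the $\le k$ coordinates in $S$ to their "true" $f^*$-values gives exactly $f^*(X)$; hence the indicator of $V(f(X)) = u$ for the witnessed $f$ on the event that $f|_S = \tau$ is dominated by the indicator that $V(f^*(X)_k) = u$ where the $k$ corrupted coordinates are those in $S$. Taking expectations over $(X_i)_{i \notin S}$ and then the max over $f^* \in \mathcal{F}$ yields the per-pair bound; multiplying by the $(n|\mathcal{Y}|)^k$ pairs gives the lemma.

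The delicate point to get right is the interaction between "$L_f$ depends on $f$" and the adversarial-corruption definition of $f(X)_k$: I need the max on the right side to be over a single fixed corruption pattern-agnostic quantity, i.e. $\Pr_X[V(f(X)_k)=u]$ should be read as "there exists a $k$-corruption of $f(X)$ on which $V$ outputs $u$" (a worst-case / existential corruption), so that fixing the corrupted set to be $S$ is a valid instantiation. I would state this reading explicitly. Assuming that matches the intended meaning of the $(\cdot)_k$ notation used later in the paper, the argument above goes through; the only calculation is the counting bound $\sum_{j \le k}\binom{n}{j}|\mathcal{Y}|^j \le (n|\mathcal{Y}|)^k$, which is routine.
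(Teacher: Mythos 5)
Your overall plan matches the paper's: enumerate over pairs $(S,\tau)$ with $|S|\le k$ and $\tau: S \to \mathcal{Y}$, union bound over the at most $(n|\mathcal{Y}|)^k$ of them, and show a per-pair bound of $\max_{f\in\mathcal{F}}\Pr_X[V(f(X)_k)=u]$. The counting is fine and the per-pair \emph{claim} is correct. However, your per-pair \emph{argument} has a genuine gap: you condition on $(X_i)_{i\notin S}$ and fix a single $f^* \in \mathcal{F}$ ``compatible with $(S,\tau)$'' in advance, then claim the witnessed labeling $f(X)$ agrees with $f^*(X)$ off $S$. That agreement requires invoking $k$-representative labeling with $L_f = S$, which only gives $f^*(X) = f(X)$ under the hypothesis $f^*(X_i) = \tau(i)$ for all $i \in S$ — a random event in $X_S$ that a fixed $f^*$ will generally fail. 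Without it, $f^*(X)$ and $f(X)$ can differ outside $S$, so the labeling is \emph{not} a $k$-corruption of $f^*(X)$ supported on $S$, and the pointwise dominance $\mathbbm{1}\{\exists f:\ L_f=S, f|_S=\tau, V(f(X))=u\} \le \mathbbm{1}\{V(f^*(X)_k)=u\}$ does not hold. Concretely: after conditioning on $X_{-S}$, the labeling the witness induces off $S$ still depends on $X_S$ through the choice of witness, so it is not ``already determined.''

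The fix, which is what the paper does, is to condition on $X_S = x_S$ instead and to choose the comparison function \emph{after} seeing $x_S$: on the event, pick any $f^\star_{x_S} \in \mathcal{F}$ with $f^\star_{x_S}(x_S) = \tau$. By $k$-representative labeling (with $L_f = S$), every witness $f$ satisfies $f(X) = f^\star_{x_S}(X)$, so $V(f(X)) = V(f^\star_{x_S}(X))$ exactly. Now $f^\star_{x_S}$ is a fixed element of $\mathcal{F}$ given $x_S$, and by independence,
\begin{equation*}
\Pr_{X_{-S}}\bigl[V(f^\star_{x_S}(X)) = u \,\big|\, X_S = x_S\bigr] \ \le\ \Pr_X\bigl[V(f^\star_{x_S}(X)_k) = u\bigr],
\end{equation*}
since a fresh draw of $X_S$ changes $f^\star_{x_S}(X)$ in at most $k$ coordinates (those in $S$), which can be corrupted back to $f^\star_{x_S}(x_S)$. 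Taking $\max$ over $f^\star_{x_S} \in \mathcal{F}$, then $\max$ over $x_S$, and bounding the remaining $\Pr_{X_S}[\cdot]$ factor by $1$ yields the per-pair bound $\max_{f\in\mathcal{F}}\Pr_X[V(f(X)_k)=u]$, and summing over $(S,\tau)$ finishes.
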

\begin{proof}
Note that ``$V(f(X)_k)=u$'' is overloaded notation that refers to the event where there is \emph{any} modification of $k$ entries of $f(X)$ such that the value of $V(\cdot)$ after the modification is $u$. For a subset $L \subseteq[n]$, let $X_L,Y_L$ denote the list of $X$ and $Y$ with indices corresponding to the subset $L$. Similarly, let $X_{-L}$ denote the list of $X$ with indices that are not in the subset $L$. Let $f^{fit}_{X_L,Y_L}(X)$ denote an arbitrary $f$ such that $f(X_L(i)) = Y_L(i)$ for every $i \in [|L|]$ (if such an $f$ exists). Then:
    \begin{align}
        & \Pr_X[\exists f s.t. V(f(X))=u] \\
        & \le \sum_{L \in [n]^k,Y_L \in \mathcal{Y}^k} \Pr_X[\mathbbm{1}_{\exists f s.t. L_f=L, f(X_L)=Y_L} \cdot V(f(X))=u] \\
        & = \sum_{L \in [n]^k,Y_L \in \mathcal{Y}^k} \Pr_X [\mathbbm{1}_{\exists f s.t. L_f=L, f(X_L)=Y_L} \cdot V(f^{fit}_{X_L,Y_L}(X))=u)] \\ 
        & = \sum_{L \in [n]^k,Y_L \in \mathcal{Y}^k} \Pr_{X_L}[\mathbbm{1}_{\exists f s.t. L_f=L, f(X_L)=Y_L}] \cdot \Pr_{X_{-L}}[V(f^{fit}_{X_L,Y_L}(X))=u) | X_L=x_L] \\ 
        & \le \sum_{L \in [n]^k,Y_L \in \mathcal{Y}^k} \Pr_{X_L}[\mathbbm{1}_{\exists f s.t. L_f=L, f(X_L)=Y_L}] \cdot \max_{f \in \mathcal{F}} \Pr_{X_{-L}}[V(f(X))=u) | X_L=x_L] \\ 
        & \le \sum_{L \in [n]^k,Y_L \in \mathcal{Y}^k} \max_{x_L \in \mathcal{X}^k} \max_{f \in \mathcal{F}} \Pr_{X_{-L}}[V(f(X))=u) | X_L=x_L]] \\ 
        & = \sum_{L \in [n]^k,Y_L \in \mathcal{Y}^k}  \max_{f \in \mathcal{F}} \max_{x_L \in \mathcal{X}^k} \Pr_{X_{-L}}[V(f(X))=u) | X_L=x_L]] \\ 
        & \le \sum_{L \in [n]^k,Y_L \in \mathcal{Y}^k} \max_{f \in \mathcal{F}} \Pr_{X}[V(f(X)_{k})=u] \\ 
        & \le (n |\mathcal{Y}|)^{k} \cdot \max_{f \in \mathcal{F}} \Pr_{X}[V(f(X)_{k})=u] 
    \end{align}
\end{proof}

Finally, all we need to do is analyze $\max_{f \in \mathcal{F}} \Pr_{X}[V(f(X)_{18}) = u]$ for our function classes $\mathcal{F}_{bad}$ and $\mathcal{F}_{good}$. Let $V$ be the function that denotes whether balance passes or fails according to \cref{algo:test} ($V(\cdot)=1$ indicates balance passes).

\begin{lemma}\label{lemma:no-wrong}
    Let $\mathcal{F}_{bad}$ denote the set of all $(w,\Delta,\muh)$-balances where $\mu \notin [\muh - \Delta, \muh + \Delta]$. For any constant $\dnfp$, there exists a constant $C_{\dfp}$ such that \underline{all} $(w,\Delta,\muh)$-balance tests in $\mathcal{F}_{bad}$ will fail with probability at least $1- \frac{1}{n^{\dnfp}}$.
\end{lemma}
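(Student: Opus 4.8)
The plan is to combine the single-test false-positive bound of \cref{lemma:no-false-pos} with the uniform-convergence machinery of \cref{lemma:k-rep}, using the $18$-representative labeling of balance-testing function classes. First I would recall that any $(w,\Delta,\muh)$-balance test is, by the discussion preceding \cref{lemma:k-rep}, encoded as a function $f:\mathcal{X}\to\mathcal{Y}$ with $|\mathcal{Y}|=9$, and the pass/fail outcome is $V(f(X))$ for a fixed decision function $V$. The class $\mathcal{F}_{bad}$ consisting of all such $f$ with $\mu\notin[\muh-\Delta,\muh+\Delta]$ satisfies $18$-representative labeling by the Claim. So \cref{lemma:k-rep} applies with $k=18$, $|\mathcal{Y}|=9$, and the target value $u=1$ (``balance passes''), giving
\[
\Pr_X[\exists f\in\mathcal{F}_{bad}\ :\ V(f(X))=1]\ \le\ (9n)^{18}\cdot \max_{f\in\mathcal{F}_{bad}}\Pr_X[V(f(X)_{18})=1].
\]

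The core of the argument is then to bound $\max_{f\in\mathcal{F}_{bad}}\Pr_X[V(f(X)_{18})=1]$, i.e. the probability that a \emph{fixed} bad balance test passes even after an adversary is allowed to flip the labels of $18$ samples. I would re-run the proof of \cref{lemma:no-false-pos} with this slack built in. As there, assume WLOG $\muh>\mu+\Delta$, so that in expectation $\Ex[\Lls]>\Ex[\Rls]$ and the test should fail on the left shift. The quantity $R_{\textrm{shift-left}}-L_{\textrm{shift-left}}$ is stochastically dominated (after conditioning on $\Tls=T'$) by $\mathrm{Rad}(T')$; allowing $18$ adversarial modifications changes each of $L_{\textrm{shift-left}},R_{\textrm{shift-left}},T_{\textrm{shift-left}}$ by at most $18$, hence shifts $R_{\textrm{shift-left}}-L_{\textrm{shift-left}}$ by at most $36$ and $T_{\textrm{shift-left}}$ by at most $18$. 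So passing requires $\mathrm{Rad}(T')+36>\sqrt{C_{\dfp}\log(n)(T'-18)}$ for some admissible $T'$. For $n$ large the additive constants are absorbed (the threshold $\sqrt{C_{\dfp}\log(n)T'}$ grows, and the $T'<C_{\dfp}\log n$ early-FAIL clause handles tiny $T'$ up to a constant change in $C_{\dfp}$), so the Hoeffding/Rademacher bound still gives $\max_{f\in\mathcal{F}_{bad}}\Pr_X[V(f(X)_{18})=1]\le 2n^{-C_{\dfp}/3}$ or similar, with the exponent linear in $C_{\dfp}$.

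Combining, $\Pr_X[\exists f\in\mathcal{F}_{bad}\ \text{passes}]\le (9n)^{18}\cdot 2n^{-\Omega(C_{\dfp})}$, which is $\le n^{-\dnfp}$ once $C_{\dfp}$ is chosen large enough as a function of $\dnfp$ (roughly $C_{\dfp}=\Theta(\dnfp+18)$); this determines the promised constant. The main obstacle I anticipate is purely bookkeeping rather than conceptual: carefully checking that the $18$-sample adversarial slack only perturbs the Rademacher concentration by lower-order terms — in particular making sure the ``$T_{\textrm{shift-left}}<C_{\dfp}\log(n)$'' early-rejection clause still fires (up to adjusting $C_{\dfp}$ by a constant) when the true $\Tls$ is small, so that the maximum over admissible $T'$ in \cref{lemma:k-rep}'s bound is genuinely controlled, and that the WLOG reduction to the left shift is unaffected by which $18$ samples are modified. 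None of these require new ideas beyond \cref{lemma:no-false-pos}; the work is verifying the constants compose correctly with the $(9n)^{18}$ union-bound factor.
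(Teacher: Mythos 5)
Your proposal is correct and follows essentially the same route as the paper: reduce via \cref{lemma:k-rep} with $k=18$, $|\mathcal{Y}|=9$ to bounding $\max_{f\in\mathcal{F}_{bad}}\Pr[V(f(X)_{18})=1]$, then re-run the Rademacher argument of \cref{lemma:no-false-pos} with additive slack from the $18$ modified labels and absorb both the slack and the $(9n)^{18}$ factor into the choice of $C_{\dfp}$. The paper packages the per-test bound as an explicit claim with target $\frac{1}{9^{18}n^{\dnfp+18}}$, but the bookkeeping (including your observation that the $T_{\textrm{shift-left}}<C_{\dfp}\log n$ early-rejection clause must be loosened by $18$) matches the paper's up to unimportant constants.
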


\begin{proof}
    First, we note that the probability of any particular test in $\mathcal{F}_{bad}$ passing is bounded, with 18 samples perturbed:

    \begin{claim} \label{claim:bad-small}
    There exists a $C_{\dfp}$ such that $\max_{f \in \mathcal{F}_{bad}} \Pr_X[V(f(X)_{18})=1] \le \frac{1}{9^{18} n^{\dnfp + 18}}$.
    \end{claim}
    \begin{proof}
        The proof is the same as \cref{lemma:no-false-pos}. We briefly outline the minor difference. To deal with the 18 modifications, in \cref{step:def-prob}, the probability of passing could instead be bounded by:

        \begin{align}
        & \Pr\left[R_{\textrm{shift-left}} - L_{\textrm{shift-left}} > \sqrt{C_{\dfp} \log(n) (T_{\textrm{shift-left}}-18)} - 18 \wedge T_{\textrm{shift-left}} \ge C_{\dfp} \log(n) - 18 \right]\\ 
        & \le \max_{T' \ge C_{\dfp} \log(n) - 18} \Pr\left[R_{\textrm{shift-left}} - L_{\textrm{shift-left}} > \sqrt{C_{\dfp} \log(n) (T'-18)} - 18 |  T_{\textrm{shift-left}}=T' \right] \\
        & \le \max_{T' \ge C_{\dfp} \log(n) - 18} \Pr\left[\textrm{Rad}(T') > \sqrt{C_{\dfp} \log(n) (T'-18)} - 18 \right] \\
        & = \max_{T' \ge C_{\dfp} \log(n) - 18} \Pr\left[\textrm{Rad}(T') > \frac{1}{2}\sqrt{C_{\dfp} \log(n) T'} \right]  \label{step:use-big-n}\\
        & \le 2 \exp\left(\frac{-2 \cdot C_{\dfp} \log(n) T'}{16T'}\right) = 2 n^{-\frac{C_{\dfp}}{8}}
    \end{align}
    \cref{step:use-big-n} holds for sufficiently large $n$, and thus there exists a constant $C_{\dfp}$ where the above quantity is bounded by $\frac{1}{9^{18} n^{\dnfp + 18}}$.
    \end{proof}

    Thus, by \cref{lemma:k-rep} and \cref{claim:bad-small}, the probability that any of $\mathcal{F}_{bad}$ passing is at most $\frac{1}{9^{18} n^{\dnfp + 18}} \cdot (9n)^{18} \le \frac{1}{n^{\dnfp}}$.
\end{proof}

\begin{lemma}\label{lemma:all-good-are-good}
    Let $\mathcal{F}_{good}$ denote the set of all $(w,\Delta,\muh)$-balances that are $C_1$-good. For any constants $\dnfn,C_{\dfp}$, there exists a constant $C_1$ such that \underline{all} tests in $\mathcal{F}_{good}$ pass with probability at least $1 -  \frac{1}{n^{\dnfn}}$.
\end{lemma}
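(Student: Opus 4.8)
The plan is to prove \cref{lemma:all-good-are-good} by exactly the same ``uniform convergence via representative labeling'' template used for \cref{lemma:no-wrong}, only with the roles of pass/fail swapped and with \cref{lemma:no-false-neg} (rather than \cref{lemma:no-false-pos}) as the single-test ingredient. Since the function class $\mathcal{F}_{good}$ of balance tests satisfies $18$-representative labeling, I would invoke \cref{lemma:k-rep} with $\mathcal{F}=\mathcal{F}_{good}$, $k=18$, $|\mathcal{Y}|=9$, and $u=0$ (``the test FAILs''), obtaining $\Pr_X[\exists f\in\mathcal{F}_{good}:V(f(X))=0]\le (9n)^{18}\cdot\max_{f\in\mathcal{F}_{good}}\Pr_X[V(f(X)_{18})=0]$. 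So it suffices to show the analogue of \cref{claim:bad-small}: there is a $C_1$ (depending on $C_{\dfp},\dnfn$) for which a single $C_1$-good test fails with probability at most $\frac{1}{9^{18}n^{\dnfn+18}}$ even when an adversary is allowed to modify $18$ of the samples; plugging this in gives $\Pr_X[\exists f\in\mathcal{F}_{good}\text{ fails}]\le\frac{1}{n^{\dnfn}}$, which is the claim.

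For that single-test bound I would re-run the proof of \cref{lemma:no-false-neg} while carrying the effect of the $18$ modifications. As there: union bound over the two shifts; WLOG take the left shift; reveal $\Tls$ first, then reveal $\Rls-\Lls$ as a sum of $\Tls$ signs $Z$. Modifying $18$ samples changes $\Tls$ by at most $18$, changes $\Rls-\Lls$ by at most $36$, relaxes the ``$\Tls<C_{\dfp}\log n$'' guard by at most $18$, and perturbs the threshold $\sqrt{C_{\dfp}\log n\,\Tls}$ by an additive $O(1)$; hence the adversary's best option is to force the event $\{\,\Rls-\Lls\le \sqrt{C_{\dfp}\log n\,\Tls}+O(1)\,\}\cap\{\Tls\ge C_{\dfp}\log n - 18\}$. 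Because $\Ex[\cnt(\mu-w,\mu+w)]\ge C_1\log n$ for a $C_1$-good tuple, a Chernoff bound confines $\Tls$ to within a factor $2$ of its mean with failure probability $\le\frac14\cdot\frac{1}{9^{18}n^{\dnfn+18}}$ once $C_1$ is a large enough multiple of $\dnfn+18$; conditioned on that, writing $\Rls-\Lls=2Z(\Tls)-\Tls$ turns the failure event into $Z(t)\le\frac12(t+\sqrt{C_{\dfp}\log n\,t}+O(1))$, while the $C_1$-good inequality $\Ex[\Rls-\Lls]\ge\sqrt{C_1\log n\,\Ex[\cnt(\mu-w,\mu+w)]}$ (and thus $\Ex[Z(t)]\ge\frac12(t+c\sqrt{C_{\dfp}\log n\,t})$ for a large constant $c$ when $C_1/C_{\dfp}$ is large) keeps that threshold a fixed multiplicative factor below $\Ex[Z(t)]$. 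A multiplicative Chernoff bound on $Z$ then bounds the conditional failure probability by $2\exp(-\Theta(C_{\dfp}\log n))$, which is $\le\frac14\cdot\frac{1}{9^{18}n^{\dnfn+18}}$ for $C_1$ large enough; summing the two stages and the two shifts gives the single-test bound.

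The one place that needs care --- the main obstacle --- is this last quantitative step: one has to check that after subtracting the $O(1)$ slack introduced by the $18$ modifications, the good-tuple signal $\sqrt{C_1\log n\,\Ex[\cnt(\mu-w,\mu+w)]}$ still exceeds the relevant threshold by the constant multiplicative margin needed to make the multiplicative Chernoff exponent both linear in $\log n$ and large enough to beat the enlarged target exponent $\dnfn+18$ together with the $9^{18}$ prefactor (rather than just $\dfn$ as in \cref{lemma:no-false-neg}). This is purely a matter of choosing $C_1$ large as a function of $C_{\dfp}$ and $\dnfn$: since the additive constants coming from $18$ perturbations are negligible beside $C_1\log n$ for $n\ge2$, the same ``lose a constant in the exponent, reabsorb it into a bigger $C_1$'' maneuver already used in \cref{claim:bad-small} carries through, and no idea beyond \cref{lemma:no-false-neg} and \cref{lemma:k-rep} is needed.
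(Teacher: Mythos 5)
Your proposal matches the paper's proof essentially line for line: the paper also reduces \cref{lemma:all-good-are-good} to a single-test bound via \cref{lemma:k-rep} with $k=18$, $|\mathcal{Y}|=9$, and $u=0$, and then proves the analogue of \cref{claim:bad-small} (stated as \cref{claim:bad-neg}) by rerunning \cref{lemma:no-false-neg} with the $18$-sample perturbation absorbed into a larger $C_1$, exactly as you describe. The only difference is cosmetic: you spell out the $O(1)$ slack accounting that the paper compresses into the one-line remark ``modify \cref{step:def-neg-prob} in the same manner as was done in \cref{claim:bad-small}.''
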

\begin{proof}
    First, we note that the probability of any particular test in $\mathcal{F}_{good}$ failing is bounded, with 18 samples perturbed:

    \begin{claim} \label{claim:bad-neg}
    There exists a $C_1$ such that $\max_{f \in \mathcal{F}_{good}} \Pr_X[V(f(X)_{18})=0] \le \frac{1}{9^{18} n^{\dnfn + 18}}$.
    \end{claim}
    \begin{proof}
        The proof is the same as \cref{lemma:no-false-neg}. The only change is to modify \cref{step:def-neg-prob} in the same manner as was done in \cref{claim:bad-small}.
    \end{proof}

    Thus, by \cref{lemma:k-rep} and \cref{claim:bad-neg}, the probability that any of $\mathcal{F}_{good}$ failing is at most $\frac{1}{9^{18} n^{\dnfn + 18}} \cdot (9n)^{18} \le \frac{1}{n^{\dnfn}}$.
    
\end{proof}

\subsubsection{Flexibility of Balance Parameters}\label{subsub:flexible}

Recall how we determine parameters $w$ and $\Delta$ in $\operatorname{Generate-Tests}$ (\cref{line:params} of \cref{algo:estimate}) by including powers of $2$ such that one is near a desired $w$ and $\Delta$ with high probability by \cref{lemma:pows-ok}. We now show that such an approximation does not worsen the conditions of a good balance as defined in \cref{lemma:no-false-neg} beyond a factor of $2$:

\begin{lemma}\label{lemma:perturb-params}
    Consider a tuple $(w,\Delta,\mu)$ such that is $C_1$-good. Then, for any $\Delta' \in [\Delta,2 \Delta]$ and $w' \in [w, 2w]$, it must be $C_1/2$-good.
\end{lemma}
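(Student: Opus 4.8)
The plan is to verify the two defining inequalities of $C_1/2$-goodness (\cref{def:goodness}) for the tuple $(w',\Delta',\mu)$ directly, by relating each quantity to its counterpart for $(w,\Delta,\mu)$. The first condition, $\Ex[\cnt(\mu-w',\mu+w')] \ge (C_1/2)\log(n)$, is immediate and in fact loses nothing: since $w' \ge w$, the interval $[\mu-w',\mu+w']$ contains $[\mu-w,\mu+w]$, so $\Ex[\cnt(\mu-w',\mu+w')] \ge \Ex[\cnt(\mu-w,\mu+w)] \ge C_1\log(n) \ge (C_1/2)\log(n)$ by monotonicity of $\cnt$ in the interval. So the real work is the second condition.

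For the second condition I need to lower bound $\Ex[\cnt(\mu,\mu+\Delta') - \cnt(\mu+w',\mu+w'+\Delta')]$ and upper bound $\sqrt{(C_1/2)\log(n)\,\Ex[\cnt(\mu-w',\mu+w')]}$, then compare. The key structural fact is that the ``signal'' $\Ex[\cnt(\mu,\mu+t) - \cnt(\mu+w',\mu+w'+t)]$ should be monotone and (roughly) superadditive in the shift length $t$ and monotone in the width separation: moving from $\Delta$ to $\Delta' \ge \Delta$ can only increase the gap, and moving from $w$ to $w' \ge w$ shifts the ``subtracted'' interval farther out into the tail of the (unimodal, symmetric) density, which also can only increase the gap. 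Concretely, for each Gaussian sample the contribution to $\Ex[\cnt(\mu,\mu+t)] - \Ex[\cnt(\mu+s,\mu+s+t)]$ equals the mass the density loses as we slide a window of length $t$ from position $0$ to position $s$; since the density is decreasing on $[0,\infty)$ (for the relevant samples) this is nonnegative and nondecreasing in both $t$ and $s$. Hence $\Ex[\cnt(\mu,\mu+\Delta') - \cnt(\mu+w',\mu+w'+\Delta')] \ge \Ex[\cnt(\mu,\mu+\Delta) - \cnt(\mu+w,\mu+w+\Delta)] \ge \sqrt{C_1\log(n)\,\Ex[\cnt(\mu-w,\mu+w)]}$.

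It then remains to compare $\sqrt{C_1\log(n)\,\Ex[\cnt(\mu-w,\mu+w)]}$ with $\sqrt{(C_1/2)\log(n)\,\Ex[\cnt(\mu-w',\mu+w')]}$. These differ because $\Ex[\cnt(\mu-w',\mu+w')]$ can be larger than $\Ex[\cnt(\mu-w,\mu+w)]$ — potentially by more than a factor of $2$ if the density is very spread out. This is the main obstacle: a naive monotonicity bound on the width term goes the wrong way. The fix is that doubling $w$ at most doubles the expected count: for any distribution with a density that is nonincreasing in $|x-\mu|$ (Gaussians centered at $\mu$ qualify), $\Ex[\cnt(\mu-2w,\mu+2w)] \le 2\,\Ex[\cnt(\mu-w,\mu+w)]$, because the mass of $[\mu-2w,\mu-w]\cup[\mu+w,\mu+2w]$ is at most that of $[\mu-w,\mu+w]$ by unimodality. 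Since $w' \le 2w$, this gives $\Ex[\cnt(\mu-w',\mu+w')] \le 2\,\Ex[\cnt(\mu-w,\mu+w)]$, so $\sqrt{(C_1/2)\log(n)\,\Ex[\cnt(\mu-w',\mu+w')]} \le \sqrt{C_1\log(n)\,\Ex[\cnt(\mu-w,\mu+w)]}$, matching the lower bound on the signal established above. Chaining the inequalities yields $\Ex[\cnt(\mu,\mu+\Delta') - \cnt(\mu+w',\mu+w'+\Delta')] \ge \sqrt{(C_1/2)\log(n)\,\Ex[\cnt(\mu-w',\mu+w')]}$, which together with the first condition establishes that $(w',\Delta',\mu)$ is $C_1/2$-good. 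The only subtlety to state carefully is the unimodality/monotonicity argument applied per-sample and then summed by linearity of expectation; everything else is bookkeeping.
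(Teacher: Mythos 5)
Your proposal is correct and follows essentially the same route as the paper: the first condition is immediate by monotonicity of $\cnt$, and for the second you establish that the signal $\Ex[\cnt(\mu,\mu+t)-\cnt(\mu+s,\mu+s+t)]$ is nondecreasing in both $t$ and $s$ by unimodality of the Gaussian density about $\mu$, then compare against the width term. In fact you are more careful than the paper in one spot: the paper's final inequality $\sqrt{C_1\log(n)\,\Ex[\cnt(\mu-w,\mu+w)]} \ge \sqrt{(C_1/2)\log(n)\,\Ex[\cnt(\mu-w',\mu+w')]}$ silently requires $\Ex[\cnt(\mu-w',\mu+w')] \le 2\,\Ex[\cnt(\mu-w,\mu+w)]$, and your unimodality-based doubling bound is exactly the justification that step needs.
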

\begin{proof}
    For the first condition:
    \begin{align}
        & \Ex[\cnt(\mu - w', \mu + w')] \\
        & \ge \Ex[\cnt(\mu - w, \mu + w)] \\
        & \ge C_1 \log(n) \ge \frac{C_1}{2} \log(n)
    \end{align}

    For the second condition:
    \begin{align}
        & \Ex[\cnt(\mu,\mu + \Delta') - \cnt(\mu + w', \mu + w' + \Delta')] \\
        & = (\Ex[\cnt(\mu,\mu + \Delta) - \cnt(\mu + w', \mu + w' + \Delta)]) + (\Ex[\cnt(\mu + \Delta,\mu + \Delta') - \cnt(\mu + w' + \Delta, \mu + w' + \Delta')]) \\
        & \ge (\Ex[\cnt(\mu,\mu + \Delta) - \cnt(\mu + w', \mu + w' + \Delta)]) \\
        & \ge (\Ex[\cnt(\mu,\mu + \Delta) - \cnt(\mu + w, \mu + w + \Delta)]) \\
        & \ge \sqrt{C_1\log(n) \Ex[\cnt(\mu - w, \mu + w)]} \\
        & \ge \sqrt{\frac{C_1}{2} \log(n) \Ex[\cnt(\mu - w', \mu + w')]}
    \end{align}
\end{proof}

\subsection{Existence of Balance}\label{sub:exist}

With the previous results, we may now focus on just showing the existence of a desirable balance with high probability. We now show the desired balances in the (unsimplified) Subset-of-Signals model. 

\emph{\underline{Larger $m$.}} For large $m$, the proof will be near-identical to the proof of \cref{lemma:toy-large-n} for the simplified model:

\begin{lemma}\label{lemma:real-large-n}
    When $m \in [n^{1/4},n]$, then for any constant $C_1$ there exists a $(w,\Delta,\mu)$-balance with $\Delta=\tilde{O}\left( \frac{n}{m^4} \right)^{1/6}$ that is $C_1$-good.
\end{lemma}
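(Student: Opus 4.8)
The plan is to mirror the proof of \cref{lemma:toy-large-n}, but to replace the appeal to the single ``bad'' variance $\sigma^*$ with the more robust $(\infty,\Delta,\mu)$-balance (the ``median-like'' balance), which needs no assumption on the structure of the $n-m$ bad variances. Concretely, I would consider the same two candidate balances: the fixed-width balance $(1,\Delta,\mu)$ driven by the $m$ good samples, and the maximal-width balance $(\infty,\Delta,\mu)$ which behaves like a high-probability median bound. The crux is that \cref{claim:findit} / \cref{def:goodness} phrase everything in terms of expectations of counts, which are monotone and do not care whether the mass inside $[\mu-1,\mu+1]$ comes from one shared $\sigma^*$ or from an arbitrary mixture of large variances.

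First I would handle the $(1,\Delta,\mu)$-balance. The $m$ good samples have $\sigma_i \le 1$, so each lands in $[\mu, \mu+\Delta]$ with probability $\Theta(\Delta)$ (for $\Delta \le 1$, which holds in this regime since $(n/m^4)^{1/6}\le 1$ when $m\ge n^{1/4}$) and in $[\mu+1,\mu+1+\Delta]$ with probability $O(\Delta/\sigma_i) \le O(\Delta)$ but in fact the \emph{difference} in densities is what matters — I would argue $\Ex[\cnt(\mu,\mu+\Delta)-\cnt(\mu+1,\mu+1+\Delta)] = \Omega(m\Delta)$ using that a Gaussian with $\sigma\le 1$ has strictly more mass near its center than one unit away, while the $n-m$ bad samples only add a \emph{nonnegative} contribution to $\cnt(\mu,\mu+\Delta)$ and could in principle add to $\cnt(\mu+1,\mu+1+\Delta)$ too — here I need to be slightly careful and instead lower-bound the difference by restricting attention to the good samples, i.e. use that the bad samples' net contribution to the difference is not too negative. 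The cleanest route: bad samples contribute at most their count in $[\mu+1,\mu+1+\Delta]$ as a negative term, but I can instead just use the good samples' contribution $\Omega(m\Delta)$ together with the fact that this must be dominated: if $(m\Delta)^2 \ge C \log(n)\,\Ex[\cnt(\mu-1,\mu+1)]$ then $(1,\Delta,\mu)$ is $C_1$-good (after verifying the first condition $\Ex[\cnt(\mu-1,\mu+1)]\ge C_1\log n$, which holds since $m \ge C\log n$). So if this balance is \emph{not} good, then $\Delta \le O(1)\sqrt{\log(n)\,\Ex[\cnt(\mu-1,\mu+1)]}/m$.

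Next I would handle the $(\infty,\Delta,\mu)$-balance. With $w=\infty$ the ``outer'' counts $\cnt(\mu+w,\mu+w+\Delta)$ vanish, so the second goodness condition becomes $\Ex[\cnt(\mu,\mu+\Delta)] \ge \sqrt{C_1 \log(n)\cdot n}$ (since $\Ex[\cnt(\mu-w,\mu+w)]=n$), which is exactly the median-type condition; the first condition $n \ge C_1\log n$ is trivial. So if this balance is not good either, then $\Ex[\cnt(\mu-\Delta,\mu+\Delta)] < O(\sqrt{\log(n)\,n})$. Then I would chain the two inequalities exactly as in \cref{lemma:toy-large-n}: $\Ex[\cnt(\mu-1,\mu+1)] \le \Ex[\cnt(\mu-\Delta,\mu+\Delta)]/\Delta$ is the one step that needs $\Delta \le 1$ (true here) — actually it needs $\Ex[\cnt(\mu-1,\mu+1)]\cdot\Delta \le \Ex[\cnt(\mu-\Delta,\mu+\Delta)]$, which is false as stated; the correct version is $\Ex[\cnt(\mu-1,\mu+1)] \le \Ex[\cnt(\mu-\Delta,\mu+\Delta)]\cdot(1/\Delta)$ only if density is larger near the center, which again holds because each sample's density is maximized at $\mu$ so $\Ex[\cnt(\mu-1,\mu+1)] \le (2/(2\Delta))\Ex[\cnt(\mu-\Delta,\mu+\Delta)] \cdot$ (some constant) — I'd state this monotonicity of count-density ratios as a one-line sub-claim. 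Substituting gives $\Delta \le O(1)\sqrt{\log(n)\cdot\sqrt{\log(n)n}/(\Delta m^2)}$, hence $\Delta^{3} \le O(\log^{3/2}(n)\cdot n/m^4)$, i.e. $\Delta = \tilde O((n/m^4)^{1/6})$.

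The main obstacle I anticipate is the one genuinely new ingredient relative to the simplified proof: controlling the contribution of the \emph{arbitrary} bad variances to the difference $\Ex[\cnt(\mu,\mu+\Delta)-\cnt(\mu+w,\mu+w+\Delta)]$ for the $(1,\Delta,\mu)$ balance — unlike the simplified case, a cluster of bad samples with $\sigma_i$ slightly larger than $1$ could sit near $\mu\pm 1$ and eat into the good samples' signal. I expect the resolution is exactly what \cref{claim:findit}'s phrasing already suggests: since $f_{N(\mu,\sigma^2)}(\mu) \ge f_{N(\mu,\sigma^2)}(\mu+1)$ for \emph{every} $\sigma$, every sample (good or bad) contributes a \emph{nonnegative} amount in expectation to $\cnt(\mu,\mu+\Delta)-\cnt(\mu+1,\mu+1+\Delta)$, so the bad samples only help (or are neutral), and the $\Omega(m\Delta)$ lower bound from the good samples stands. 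Once that is pinned down, the rest is the same algebra as \cref{lemma:toy-large-n} combined with \cref{lemma:real-large-n}'s conclusion being fed into \cref{lemma:all-good-are-good,lemma:perturb-params} downstream (not needed for this lemma itself). I would also double-check the regime bookkeeping: $m\in[n^{1/4},n]$ ensures $(n/m^4)^{1/6}\le 1$ so all the ``$\Delta\le 1$'' uses are legitimate, and $m\ge n^{1/4}\ge C\log n$ for large $n$ handles the first goodness condition.
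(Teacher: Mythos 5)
Your proposal is correct and takes essentially the same route as the paper: test the two candidate balances $(1,\Delta,\mu)$ and $(\infty,\Delta,\mu)$, derive a constraint on $\Delta$ from each not being $C_1$-good, chain them through the monotonicity of the averaged density (the map $a\mapsto \frac{1}{2a}\Ex[\cnt(\mu-a,\mu+a)]$ is non-increasing since each sample's density is symmetric and unimodal at $\mu$), and solve for $\Delta$. The one place you add value over the paper's terse treatment is that you explicitly name and close the gap that distinguishes this from the simplified Lemma~\ref{lemma:toy-large-n}: because $f_{N(\mu,\sigma^2)}$ is decreasing on $[\mu,\infty)$ for \emph{every} $\sigma$, every sample (good or bad, whatever its variance) contributes a nonnegative amount in expectation to $\cnt(\mu,\mu+\Delta)-\cnt(\mu+1,\mu+1+\Delta)$, so restricting the lower bound to the $m$ variance-bounded samples costs nothing and yields $\Omega(m\Delta)$. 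The paper leaves this implicit; your spelling it out is exactly the right fix, and the hesitation you voiced about the $\Ex[\cnt(\mu-1,\mu+1)]\le\Ex[\cnt(\mu-\Delta,\mu+\Delta)]/\Delta$ step resolves correctly for the same unimodality reason. Your regime bookkeeping ($\Delta\le 1$ and $m\ge C\log n$ when $m\ge n^{1/4}$) also matches what the paper needs.
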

\begin{proof}
    We will consider evaluating two types of balance, and conclude that at least one of these balances must be $C_1$-good with the desired $\Delta$. 
    
    $(1,\Delta,\mu)$-balance is $C_1$-good if $\Omega(1) \cdot (m \cdot \Delta)^2 \ge C_1 \log(n) \cdot \Ex[\cnt(\mu-1,\mu+1)]$. Otherwise, it must hold that $\Delta \le O(1) \cdot \sqrt{\frac{C_1 \log(n) \Ex[\cnt(\mu - 1, \mu + 1)]}{m^2}}$.

    Intuitively, if this is an undesirable bound on $\Delta$, then $\Ex[\cnt(\mu - 1, \mu + 1)]$ must be large, meaning many of the $n-m$ samples must be realized in $[-1,+1]$, and this will force the median to perform well. In other words, either we are able to find balance from our $m$ ``good'' samples, or our remaining $n-m$ ``bad'' samples must not actually be too bad. For our other type of balance, we will notice how $(\infty,\Delta,\mu)$-balance behaves similarly to classical high-probability guarantees for the median. This balance will be $C_1$-good if $\Ex[\cnt(\mu-\Delta,\mu+\Delta)] \ge O(1) \cdot \sqrt{C_1 \log(n) n}$.

    Combining both restrictions, if neither balance is $C_1$-good then $\Delta \le O(1) \cdot \sqrt{\frac{C_1 \log(n) \Ex[\cnt(\mu - 1, \mu + 1)]}{m^2}} \le O(1) \cdot \sqrt{\frac{C_1 \log(n) \Ex[\cnt(\mu - \Delta, \mu + \Delta)]}{\Delta m^2}} \le O(1) \cdot \sqrt{\frac{C_1 \log(n) \sqrt{C_1 \log(n) n}}{\Delta m^2}}$. This implies $\Delta \le O(1) \cdot (C_1^{3/2} \log^{3/2}(n))^{1/3} \cdot (\frac{n}{m^4})^{1/6} = O(\sqrt{\log(n)} \cdot (\frac{n}{m^4})^{1/6}) = \tilde{O}((\frac{n}{m^4})^{1/6}))$. 
    
\end{proof}

\emph{\underline{Smaller $m$.}} For small $m$, the proof will have more nuance to deal with complications from samples that can disrupt balance at the scale of the $m$ good $\sigma_i$, but the samples do not force the median to perform well:

 \begin{lemma}\label{lemma:real-small-n}
    For any constant $C_1$ there exists a constant $C'$ such that when $m \in [C' \log(n),n^{1/4}]$, there exists a $(w,\Delta,\mu)$-balance with $\Delta=\tilde{O}\left( \frac{n}{m^4} \right)^{1/2}$ that is $C_1$-good.
\end{lemma}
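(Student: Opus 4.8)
The plan is to follow the template of \cref{lemma:toy-small-n}, but to replace the single width-$1$ test by a family of \emph{scale balances} $(w,\Delta,\mu)$ whose widths range over dyadic values, together with the median-like test $(\infty,\Delta,\mu)$; the target width is $\Delta^\star \eqdef C_\Delta\sqrt{n}\,\mathrm{poly}\log(n)/m^2 = \tilde\Theta\big((n/m^4)^{1/2}\big)$, where $C_\Delta$ is a large constant and the polylog factor carries deliberate slack. I would use two elementary facts about a single $N(\mu,\sigma_i^2)$: for $\Delta\le w$ the half-difference $\Ex[\cnt(\mu,\mu+\Delta)-\cnt(\mu+w,\mu+w+\Delta)]$ is always $\ge 0$, is $\Theta(\min(1,\Delta/\sigma_i))$ when $\sigma_i\le w$, and is $O\big((w/\sigma_i)^{3}\big)$ when $\sigma_i>w$; and $\Ex[\cnt(\mu-w,\mu+w)]=\sum_i\Theta(\min(1,w/\sigma_i))$. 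Write $h(w)\eqdef\Ex[\cnt(\mu-w,\mu+w)]$ (nondecreasing, $h(1)\ge\Omega(m)$, $h(\infty)=n$), $N_{\le w}\eqdef\#\{i:\sigma_i\le w\}$, and $W\eqdef\sum_{i:\sigma_i>1}1/\sigma_i$, so $h(1)=\Theta(N_{\le1})+\Theta(W)$.

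First I would dispose of the ``boundary'' cases. If $h(1)\le c\,m^2/\log n$ for a suitably small constant $c$, then $(1,\Delta^\star,\mu)$ is $C_1$-good: the good samples each contribute $\Theta(1)$ so the condition-2 left side is $\Omega(m)$, while the right side is $\sqrt{C_1\log n\cdot h(1)}=O(m\sqrt{cC_1})$, and $\Delta^\star\ge1$ since $m\le n^{1/4}$. If instead $h(\Delta^\star)\ge C_1'\sqrt{n\log n}$, then $(\infty,\Delta^\star,\mu)$ is $C_1$-good, since its condition-2 left side is $\tfrac12 h(\Delta^\star)$ and its right side is $\sqrt{C_1\log n\cdot n}$, which is smaller once $C_1'\ge4C_1$. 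It remains to handle $c\,m^2/\log n<h(1)$ and $h(\Delta^\star)<C_1'\sqrt{n\log n}$ simultaneously; and if $N_{\le1}\ge W$ then $(1,1,\mu)$ is $C_1$-good (condition-2 left side $\Omega(N_{\le1})$, right side $O(\sqrt{\log n\cdot N_{\le1}})$, using $N_{\le1}\ge m\ge C'\log n$), so I may also assume $N_{\le1}<W$, whence $W=\Theta(h(1))\gtrsim m^2/\log n$.

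The heart is then a pigeonhole over the bad variances. Decompose $W$ over the $O(\log n)$ dyadic buckets $(2^j,2^{j+1}]$ that carry non-negligible mass (samples with astronomically large $\sigma_i$ contribute $o(1)$ to everything), and let $j^\star$ be the heaviest bucket, so $N_{j^\star}\eqdef\#\{i:\sigma_i\in(2^{j^\star},2^{j^\star+1}]\}$ satisfies $N_{j^\star}\asymp 2^{j^\star}W_{j^\star}\gtrsim 2^{j^\star}W/\log n\gtrsim 2^{j^\star}m^2/\log^2 n$. If $2^{j^\star+1}\le\Delta^\star$, I would test $(2^{j^\star+1},2^{j^\star+1},\mu)$: every sample with $\sigma_i\le 2^{j^\star+1}$ contributes $\Theta(1)$ to the condition-2 left side, giving $\Omega(N_{\le 2^{j^\star+1}})$, while because $j^\star$ is the heaviest bucket the tail of $h$ is controlled, $h(2^{j^\star+1})=O(\log n)\cdot N_{\le 2^{j^\star+1}}$, so the test succeeds once $N_{\le 2^{j^\star+1}}\gtrsim C_1\log^2 n$. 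If instead $2^{j^\star}>\Delta^\star$, I would test $(2^{j^\star+1},\Delta^\star,\mu)$: each of the $N_{j^\star}\gtrsim 2^{j^\star}m^2/\log^2 n$ samples at scale $\approx2^{j^\star+1}$ contributes $\Theta(\Delta^\star/2^{j^\star})$ to the condition-2 left side, for a total $\Omega(m^2\Delta^\star/\log^2 n)$, against a right side $\le\sqrt{C_1\log n\cdot n}$, and $m^2\Delta^\star/\log^2 n\gtrsim\sqrt{C_1 n\log n}$ holds precisely because $\Delta^\star=\tilde\Theta((n/m^4)^{1/2})$ supplies enough polylog factors. All widths and shifts invoked here are $1$, $\infty$, or powers of two within a $\mathrm{poly}(n)$ factor of some $\sigma_{2^i}$, so they are supplied by \cref{corr:pows-ok} and survive the rounding of \cref{lemma:perturb-params}, which is what the later combining step requires.

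The main obstacle is the first branch ($2^{j^\star+1}\le\Delta^\star$) when $m$ is near the lower end $C'\log n$: there $N_{j^\star}$, and hence $N_{\le 2^{j^\star+1}}$, can be merely polylogarithmic, whereas the test needs it to outweigh $\sqrt{\log n\cdot h}$. The fix is to first extract — as in the boundary cases, using that the median fails at $\Delta^\star$ — the lower bound $N_{\le\Delta^\star}\gtrsim m^2/\log n$, and then choose the test width as large as is permitted, either a heavy dyadic bucket or $\Delta^\star$ itself, so that the many concentrated samples it captures dominate the carefully-bounded $h$; pushing this through quantitatively while keeping $\Delta^\star$ at $\tilde O((n/m^4)^{1/2})$ — i.e.\ tracking the bounded number of polylog factors lost at each step, and subdividing the small-$m$ regime as needed — is where the bulk of the work lies.
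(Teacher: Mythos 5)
Your triage — a width-$1$ test, a median-like $(\infty,\Delta)$ test, and an intermediate dyadic-scale test, with the conclusion following by showing at least one must be $C_1$-good — mirrors the paper's three-case structure exactly, and your boundary cases (Case~1: $(1,\Delta^\star,\mu)$ when $h(1)$ is small; Case~2: $(\infty,\Delta^\star,\mu)$ when $h(\Delta^\star)$ is large; the side observation that $(1,1,\mu)$ works when $N_{\le1}\ge W$) are in the right spirit. However, your Case~3 argument has a genuine gap, and it is precisely the gap you yourself flag: the ``heaviest dyadic bucket'' pigeonhole does not close for $m$ near the lower end $C'\log n$. Concretely, your bound $h(2^{j^\star+1})=O(\log n)\cdot N_{\le 2^{j^\star+1}}$ (from $\sum_{k>j^\star}W_k\le O(\log n)W_{j^\star}$) forces the test to require $N_{\le 2^{j^\star+1}}\gtrsim C_1\log^2 n$, yet you only have $N_{\le 2^{j^\star+1}}\ge m\ge C'\log n$ in general — a full $\log n$ short. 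Your suggested repair (extract $N_{\le\Delta^\star}\gtrsim m^2/\log n$ and then pick a large test width) is left as an aspiration: you do not exhibit a particular scale at which the ``many concentrated samples'' actually dominate $h$ by the needed margin, and the difficulty is exactly that the concentrated samples can be smeared across $\Theta(\log n)$ dyadic buckets in a way that defeats any single heaviest-bucket choice.

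The paper's Case~3 replaces your one-shot heaviest-bucket pigeonhole with an inductive exponential-decay argument. It defines $b(j)$ as the mass of bucket $j$ relative to the heaviest bucket (measured in $[-1,+1]$-density, not in $1/\sigma$-mass), and the partial tail sum $B_\Delta(j)=\sum_{k>j}b(k)$. The key step is: \emph{if no $(2^j,2^{j-1},\mu)$-balance with $j\ge 40$ is $C_1$-good}, then $b(j)\le\sqrt{(j+B_\Delta(j))/2^{j-1}}$, and a downward induction shows $B_\Delta(39)\le 1/4$. Since $B_\Delta(-1)\ge 1/2$ always holds, pigeonholing over the remaining $40$ small scales forces some $b(j^\star)\ge 1/160$ with $j^\star<40$, hence a \emph{constant} (not $1/\log n$) fraction of the $[-1,+1]$ density at a constant scale, which makes a $(2^{j^\star},2^{j^\star-1},\mu)$-balance $C_1$-good using only $m\ge C'\log n$ — no extra $\log$ factor is lost. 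This is the idea your proposal is missing: you need the failure of the tests at \emph{every} larger scale to force geometric decay of the bucket masses, not merely to exploit the heaviest bucket. Without it, the small-$m$ regime $m=\Theta(\log n)$ is not covered, and your proof does not establish the lemma.
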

\begin{proof}
    Instead of considering a pair of balances, we will now consider a richer collection of balances. Still, we will conclude that at least one of these options must exhibit goodness with the desired $\Delta$. 

    At a high-level, we will show that one of the following cases must always hold:

    \begin{enumerate}
        \item $(1,\frac{1}{2},\mu)$-balance is good. We will show this must occur if $\Omega(1)$ fraction of the samples landing in $[-1,+1]$ are from the $m$ variance-bounded points (meaning $\frac{m}{\Ex[\cnt(-1,+1)]} = \Omega(1)$). 
        \item The median-like balance of $(\infty,\Delta,\mu)$ is good. We will show this must occur if $(1,\frac{1}{2},\mu)$-balance is not good and $\Omega(1)$ fraction of the samples landing in $[-1,+1]$ are from samples with $\sigma_i \ge \Delta$.
        \item $(2^j, 2^{j-1},\mu)$-balance is good for a $1 \le 2^j \le \Delta$. We will show this must occur if none of the previous cases hold. 
    \end{enumerate}

    \textbf{Case 1. } If $(1,\frac{1}{2},\mu)$ balance is not good, then it must hold that $\Omega(1) \cdot m^2 \le C_1 \log(n) \Ex[\cnt(-1,+1)]$. Then, we know $\frac{m}{\Ex[\cnt(-1,+1)]}\le O(1) \cdot \frac{C_1 \log(n)}{m} \le O(1) \cdot \frac{C_1}{C'}$.

    \textbf{Case 2. } We will show that if $(1,\frac{1}{2},\mu)$-balance is not good, then either $(\infty,\Delta,\mu)$ is good or it must hold that the fraction of expected samples landing in $[-1,+1]$ from $\sigma_i \ge \Delta$ is bounded by a constant of our choice. Recall how if $(1,\frac{1}{2},\mu)$-balance is not good then $\Omega(1) \cdot m^2 \le C_1 \log(n) \Ex[\cnt(-1,+1)]$, and thus $\Ex[\cnt(-1,+1)] \ge \Omega(1) \cdot \frac{m^2}{C_1 \log(n)}$. Let $\rho_{\ge \Delta}(l,r)$ denote the number of samples in $[l,r]$ from samples with $\sigma_i \ge \Delta$. Accordingly, if for some constant $C_{\Delta}$ it holds that $\frac{\Ex[\rho_{\ge \Delta}(-1,+1)]}{\Ex[\cnt(-1,+1)]} \ge C_{\Delta}$, then it must hold that $\Ex[\rho_{\ge \Delta}(-\Delta,+\Delta)] \ge \Omega(1) \cdot \Delta \cdot \Ex[\rho_{\ge \Delta}(-1,+1)] \ge \Omega(1) \cdot C_{\Delta} \cdot \Delta \cdot \Ex[\cnt(-1,+1)] \ge \Omega(1) \cdot C_{\Delta} \cdot \Delta \cdot \frac{m^2}{C_1 \log(n)}$. Recall how $(\infty,\Delta,\mu)$-balance is $C_1$-good if $\Ex[\rho_{\ge \Delta}(0,\Delta)] \ge \sqrt{C_1 \log(n) n}$, which is implied if $\Omega(1) \cdot C_{\Delta} \cdot \Delta \cdot \frac{m^2}{C_1 \log(n)} \ge \sqrt{C_1 \log(n) n}$ and thus by $\Delta \ge O(1) \cdot \frac{C_1^{1.5} \log^{1.5}(n) \sqrt{n}}{C_{\Delta} m^2} \ge \frac{1}{C_{\Delta}} \tilde{O}((\frac{n}{m^4})^{1/2})$. So, either we find our desired balance or we have shown our desire that $\frac{\Ex[\rho_{\ge \Delta}(-1,+1)]}{\Ex[\cnt(-1,+1)]} \le C_{\Delta}$. Arbitrarily, we set $C_{\Delta}=\frac{1}{2}$ to conclude $\frac{\Ex[\rho_{\ge \Delta}(-1,+1)]}{\Ex[\cnt(-1,+1)]} \le \frac{1}{2}$

    \textbf{Case 3. } Finally, we will show that if neither of the above cases held, then there must be a $(2^j, 2^{j-1},\mu)$-balance for a $1 \le 2^j \le \Delta$. To accomplish this, we will be reasoning about the density of samples that have roughly similar variances. Let us define $\rho_i(a,b)$ as the number of samples in $(a,b)$ from samples with $\sigma_j \in (2^{i-1},2^i]$. As a special case, let $\rho_0(a,b)$ denote the number of samples in $(a,b)$ from samples with $\sigma_j \le 1$. We will look at the value that corresponds to having the highest density in $[-1,+1]$: $i^* \triangleq \argmax_i \Ex[\rho_i(-1,+1)]$. We will define some quantities about the density with regards to the density at $i^*$: $b(j) \triangleq \frac{\Ex[\rho_j(-1,+1)]}{\Ex[\rho_{i^*}(-1,+1)]}$ and $B_{\Delta}(j) \triangleq \sum_{k>j}^{\lceil \log(\Delta) \rceil} b(j)$. Then for any $j$:

    \begin{align}
        & \Ex[\cnt(-1,+1)] \\
        & = \sum_{k=0}^{\infty} \Ex[\rho_k(-1,+1)] \\
        & \le 2 \cdot \sum_{k=0}^{\lceil \log(\Delta)\rceil} \Ex[\rho_k(-1,+1)] \label{step:use-case-2} \\
        & = 2 \cdot \left(\sum_{k=1}^j \Ex[\rho_k(-1,+1)] + \sum_{j+1}^{\lceil \log(\Delta) \rceil} \Ex[\rho_k(-1,+1)]\right) \\
        & = 2 \cdot \left(\sum_{k=1}^j \Ex[\rho_k(-1,+1)] + B_{\Delta}(j) \Ex[\rho_{i^*}(-1,+1)] \right)\\
        & \le 2 \cdot \left( j \Ex[\rho_{i^*}(-1,+1)] + B_{\Delta}(j) \Ex[\rho_{i^*}(-1,+1)] \right)
    \end{align}

    \cref{step:use-case-2} follows from the final statement within Case 2 that $\frac{\Ex[\rho_{\ge \Delta}(-1,+1)]}{\Ex[\cnt(-1,+1)]} \le \frac{1}{2}$. This implies:

    \begin{align}
        & \frac{\Ex[\rho_{i^*}(-1,+1)]}{\Ex[\cnt(-1,+1)]} \\
        & \ge \frac{1}{2 (j + B_{\Delta}(j))}
    \end{align}

    Then, we can now relate the density at $j$ compared to the total:

    \begin{align}
        & \frac{\Ex[\rho_j(-1,+1)]}{\Ex[\cnt(-1,+1)]} \\
        &= \frac{\Ex[\rho_j(-1,+1)]}{\Ex[\rho_{i^*}(-1,+1)]} \cdot \frac{\Ex[\rho_{i^*}(-1,+1)]}{\Ex[\cnt(-1,+1)]} \\
        & \ge \frac{b(j)}{2(j + B_{\Delta}(j))}
    \end{align}

    Our hope will be to show now that either we can find $(2^j,2^{j-1},\mu)$-balance, or $b(j)$ must be small in such a way that $B(O(1))$ is small and then we must have been able to find some $(O(1),O(1),\mu)$-balance as was done in Case 1. 

    More precisely, we can find $(2^j,2^{j-1},\mu)$-balance if $\Ex[\rho_j(-2^j,2^j)]^2 \ge O(1) \cdot C_1 \log(n) \Ex[\cnt(-2^j,2^j)]$. Moreover, we will observe this by:

    \begin{align}
        & \Ex[\rho_j(-2^j,+2^j)]^2 \\
        & \ge (\Omega(1) \cdot \Ex[\rho_j(-1,+1)] \cdot 2^j) \cdot (\frac{\Ex[\rho_j(-2^j,+2^j)]}{\Ex[\cnt(-2^j,+2^j)]} \cdot \Ex[\cnt(-2^j,+2^j)]) \\
        & \ge (\Omega(1) \cdot b(j) \cdot 2^j \cdot \Ex[\rho_{i^*}(-1,+1)]) \cdot (\frac{b(j)}{2(j+B_{\Delta}(j))} \cdot \Ex[\cnt(-2^j,+2^j)]) \\
        & \ge (\Omega(1) \cdot b(j) \cdot 2^j \cdot m) \cdot (\frac{b(j)}{2(j+B_{\Delta}(j))} \cdot \Ex[\cnt(-2^j,+2^j)]) \\
        & \ge (\Omega(1) \cdot \frac{b(j)^2 2^j}{2(j+B_{\Delta}(j))}) \cdot (C' \log(n)  \cdot \Ex[\cnt(-2^j,+2^j)]) \\
        & = (\Omega(1) \cdot \frac{C'}{C_1} \cdot \frac{b(j)^2 2^{j-1}}{j+B_{\Delta}(j)}) \cdot (C_1 \log(n)  \cdot \Ex[\cnt(-2^j,+2^j)]) \label{step:cprime}
    \end{align}

    Meaning, this balance is $C_1$-good if $(\Omega(1) \cdot \frac{C'}{C_1} \cdot \frac{b(j)^2 2^{j-1}}{j+B_{\Delta}(j)}) \ge 1$. For a $C'$ chosen to be sufficiently large, this means the balance is $C_1$-good if, say, $\frac{b(j)^2 2^{j-1}}{j+B_{\Delta}(j)} \ge \frac{1}{100} \iff b(j) \ge \sqrt{\frac{j + B_{\Delta}(j)}{2^{j-1} \cdot 10}}$.

    Our remaining plan is as follows: (i) we notice that $B_{\Delta}(\lceil \log(\Delta) \rceil)=0$ by definition, (ii) we notice that $B_{\Delta}(-1) = \sum_{j=0}^{\lceil \log(\Delta) \rceil} \frac{\Ex[\rho_{j}(-1,+1)]}{\Ex[\rho_{i^*}(-1,+1)]} \ge \sum_{j=0}^{\lceil \log(\Delta) \rceil} \frac{\Ex[\rho_{j}(-1,+1)]}{\Ex[\cnt(-1,+1)]} \ge \frac{1}{2}$, (iii) if none of the desired balances are $C_1$-good for $j \ge 40$ then $B_{\Delta}(39)$ is small, and (iv) if $B_{\Delta}(-1) \gg B_{\Delta}(39)$ then one of $b(i)$ for $i < 40$ is large and this suffices for $(2^i,2^{i-1},\mu)$-balance to be good.

    Steps (i) and (ii) follow immediately from their statement. As mentioned above, if none of these balances are $C_1$-good then we can conclude that $b(j) \le \sqrt{\frac{j + B_{\Delta}(j)}{2^{j-1}}}$. For step (iii):

    \begin{claim}
        $B_{\Delta}(39) \le \frac{1}{4}$ if none of the balances are $C_1$-good.
    \end{claim}
    \begin{proof}
        Let us consider an inductive proof that is decreasing in $j$ until $j=39$. Suppose it holds that $B_{\Delta}(j) \le \frac{1}{4}$. Then, as none of the balances are $C_1$-good, we know $b(j) \le \sqrt{\frac{j + B_{\Delta}(j)}{2^{j-1}}} \le \sqrt{\frac{j + \frac{1}{4}}{2^{j-1}}} \le \sqrt{\frac{j}{2^{j-2}}} \le \sqrt{\frac{1}{2^{j/2-2}}} = 2 \cdot 2^{-j/4}$. If the sum of each upper bound on $\sum_{k=j}^{\lceil \log(\Delta) \rceil} b(j)$ is bounded by $\frac{1}{4}$, then our induction would hold. As expected, we observe this as $\sum_{k = j}^{\lceil \log(\Delta) \rceil} b(j) \le \sum_{k = j}^{\infty} 2 \cdot 2^{-j/4} \le \sum_{k = 40}^{\infty} 2 \cdot 2^{-j/4} < \frac{1}{4}$.
    \end{proof}

    Finally, for step (iv), note how $B_{\Delta}(-1) \ge \frac{1}{2}$ and $B_{\Delta}(39) \le \frac{1}{4}$. By pigeonhole principle, we know $\max_{0 \le j \le 39} b(j) \ge \frac{B_{\Delta}(39) - B_{\Delta}(-1)}{40} \ge \frac{1}{160}$. Let $j^*$ denote the corresponding $\argmax_{0 \le j \le 39} b(j)$ where $b(j^*) \ge \frac{1}{160}$. Then, we know $\frac{\Ex[\rho_{j^*}(-1,+1)]}{\Ex[\cnt(-1,+1)]} \ge \frac{b(j^*)}{2(j + B_{\Delta}(j^*))} \ge \frac{1/160}{2(40 + 40 + 1/4)} \ge  \Omega(1)$. We observe that $(2^{j^*},2^{j^*-1},\mu)$-balance must be $C_1$-good if 
    \begin{align}
        & \Omega(1)\cdot\Ex[\rho_{j^*}(-2^{j-1},+2^{j-1})] \ge \sqrt{C_1 \log(n) \Ex[\cnt(\mu-2^j,\mu+2^j)]} \\
        & \impliedby \Omega(1)\cdot \frac{\Ex[\rho_{j^*}(-1,+1)]}{\Ex[\cnt(-1,+1)]} \cdot \Ex[\rho_{j^*}(-1,+1)] \ge C_1 \log(n) \\
        & \impliedby \Omega(1) \cdot \frac{C' \log(n)}{160} \ge C_1 \log(n) \\
        & \impliedby C' \ge O(1) \cdot C_1
    \end{align}

    Thus, for large enough $C'$, we have shown there must exist one of the desired $C_1$-good balances.

\end{proof}

\subsection{Combining Ingredients: Obtaining an Estimation Guarantee}\label{sub:combine}

These components will be sufficient to almost immediately attain our desired estimation guarantees:

\begin{restatable}[Subset-of-Signals: Large $m$]{theorem}{theoremsublarge}\label{theorem:est-large-n}
     For any constant $\delta$, when $m \in [n^{1/4},n]$, \cref{algo:estimate} attains error $\tilde{O}\left( \frac{n}{m^4} \right)^{1/6}$ with probability at least $1 - \frac{1}{n^{\delta}}$.
\end{restatable}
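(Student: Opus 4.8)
The plan is to assemble the estimation guarantee from ingredients already established: the existence result \cref{lemma:real-large-n}, the uniform-convergence lemmas \cref{lemma:no-wrong,lemma:all-good-are-good}, the parameter-robustness lemma \cref{lemma:perturb-params}, and the scale-approximation guarantee \cref{corr:pows-ok} (together with \cref{lemma:pows-ok}). First I would fix the constants. Given the target $\delta$, let $C_{\dfp}$ be the constant from \cref{lemma:no-wrong} with $\dnfp:=\delta+2$, let $C_1$ be the constant from \cref{lemma:all-good-are-good} with $\dnfn:=\delta+2$ and this $C_{\dfp}$, and let $C_{\dpa}$ be the constant from \cref{corr:pows-ok} with $\dpa:=\delta+2$; I will invoke \cref{lemma:real-large-n,lemma:perturb-params} with goodness parameter $2C_1$, so that the perturbed tuple produced below is still $C_1$-good. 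A union bound then shows that, except with probability $3/n^{\delta+2}\le 1/n^{\delta}$, all of the following hold simultaneously: (E1) every $(w,\Delta,\muh)$-balance with $\mu\notin[\muh-\Delta,\muh+\Delta]$ fails its test; (E2) every $C_1$-good $(w,\Delta,\mu)$-balance passes its test; (E3) $\spa$ contains a value in $[n^{c}\sigma_{2^i},\,2n^{c}\sigma_{2^i}]$ for every index $i$ and every $c\in[-2,2]$. The rest of the argument conditions on E1, E2, E3.

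Under E1 the interval $\conf$ returned by \cref{algo:estimate} always contains $\mu$, since every interval intersected into $\conf$ is of the form $[\muh-\Delta,\muh+\Delta]$ for a $\muh$ that \emph{passed} $\operatorname{Test}(w,\Delta,\muh)$, and E1 forces such a $\muh$ to satisfy $|\mu-\muh|\le\Delta$. So it only remains to exhibit a tested pair $(w,\Delta)$ that drives $\conf$ down to width $\tilde{O}((n/m^4)^{1/6})$. To that end I would normalize, using scale- and translation-invariance of \cref{algo:estimate}, so that $\sigma_{2^{i_0}}=1$, where $2^{i_0}$ is the power of two nearest $m$: then $2^{i_0}=\Theta(m)$, the rescaling factor $\sigma_{2^{i_0}}\le\sigma_m\le 1$, and it suffices to prove error $\tilde{O}((n/(2^{i_0})^4)^{1/6})=\tilde{O}((n/m^4)^{1/6})$ for the normalized instance. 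Applying \cref{lemma:real-large-n} to the normalized instance (with $2^{i_0}$ in place of $m$, and goodness parameter $2C_1$) produces a $2C_1$-good $(w^\star,\Delta^\star,\mu)$-balance with $w^\star\in\{1,\infty\}=\{\sigma_{2^{i_0}},\infty\}$ and $\Delta^\star=\tilde{O}((n/m^4)^{1/6})$, and one checks $\Delta^\star$ lies within a polynomial factor of $\sigma_{2^{i_0}}$.

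Now by E3 (and because $\infty\in\spa$ outright) there are $w'\in\spa$ with $w'\in[w^\star,2w^\star]$ and $\Delta'\in\spa$ with $\Delta'\in[\Delta^\star,2\Delta^\star]$. By \cref{lemma:perturb-params} the perturbed tuple $(w',\Delta',\mu)$ is $C_1$-good, so by E2 it passes its test, i.e.\ $\mu\in S_{w',\Delta'}$. When the main loop of \cref{algo:estimate} reaches $(w',\Delta')$ it intersects $\conf$ with $[\muh_{\min}-\Delta',\muh_{\min}+\Delta']$ and $[\muh_{\max}-\Delta',\muh_{\max}+\Delta']$, where $\muh_{\min}$ and $\muh_{\max}$ are the extreme points of $S_{w',\Delta'}$; by E1 both lie within $\Delta'$ of $\mu$, so after this step $\conf$ has width at most $2\Delta'=\tilde{O}((n/m^4)^{1/6})$. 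Combined with $\mu\in\conf$, every estimate $\muh\in\conf$ then satisfies $|\muh-\mu|\le 2\Delta'=\tilde{O}((n/m^4)^{1/6})$, and undoing the normalization only scales this by $\sigma_{2^{i_0}}\le 1$.

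The main obstacle is the step connecting the abstract existence statement \cref{lemma:real-large-n} — which only locates a good balance at the ``natural'' scales $w^\star\in\{1,\infty\}$ and some $\Delta^\star$ — to the discrete family $\spa$ of the $\tilde{O}(1)$ scales that \cref{algo:estimate} actually sweeps. Making this rigorous requires (i) the scale-invariance normalization, which pins the relevant width $1$ to $\sigma_{2^{i_0}}$ for a \emph{power-of-two} index and hence anchors it inside the power-of-two ladder around $r_{2^{i_0}}$ that $\spa$ contains, via \cref{lemma:pows-ok}; (ii) verifying that $\Delta^\star$ likewise lands inside that ($\mathrm{poly}(n)$-wide) ladder; and (iii) absorbing the resulting factor-$2$ discretization slack through \cref{lemma:perturb-params}, which is why we provisioned the extra factor of $2$ in the goodness constant. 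Everything else — the failure-probability bookkeeping and the elementary geometry of intersecting the confidence intervals — is routine, and the remaining edge cases (e.g.\ $m$ only within a constant factor of a power of two, where $(n/m^4)^{1/6}=\tilde{\Theta}(1)$ anyway) are absorbed into the $\tilde{O}(\cdot)$.
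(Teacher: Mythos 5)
Your proof is correct and follows the same route as the paper's: invoke \cref{lemma:real-large-n} to get a $2C_1$-good balance, bridge to the discrete set $\spa$ via \cref{corr:pows-ok} and \cref{lemma:perturb-params}, and conclude with \cref{lemma:all-good-are-good,lemma:no-wrong} and a union bound. Your only refinement is making explicit the scale-normalization (to $\sigma_{2^{i_0}}=1$ with $2^{i_0}$ the largest power of two at most $m$) that the paper's proof treats more tersely when it asserts $w,\Delta$ lie within a factor of $n$ of $\sigma_m$.
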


\begin{proof}
    The proof follows exactly the same structure as \cref{theorem:est-small-n}. We will set the error parameters $\dnfn,\dnfp,\dpa$ to constants such that $\frac{1}{n^{\dnfn}} + \frac{1}{n^{\dnfp}} + \frac{1}{n^{\dpa}} \le \frac{1}{n^{\delta}}$. Additionally, suppose that $m$ is a power of $2$ (this can be obtained by considering the largest power of $2$ smaller than $m$). 

    By \cref{lemma:all-good-are-good}, there exists a $C_1$ such that all $C_1$-good balances pass the balance test with probability at least $1 - \frac{1}{n^{\dnfn}}$. By \cref{lemma:perturb-params}, any $2 C_1$-good balance will still be a $C_1$-good balance when its parameters are increased by at most a factor of $2$. By \cref{lemma:real-large-n}, there must exist a $(w,\Delta,\mu)$-balance with $\Delta=\tilde{O}\left( \frac{n}{m^4} \right)^{1/6}$ that is $2C_1$-good. Moreover, the $w$ and $\Delta$ for this $2C_1$-good balance are either $\infty$ or within a factor of $n$ of $\sigma_m = \sigma_{2^{\log(m)}}$, so by \cref{corr:pows-ok}, with probability at least $1 - \frac{1}{n^{\dpa}}$ we will test a $w',\Delta'$ that are within a factor of $2$ of the desired values. Thus, we will test a $C_1$-good balance with $\Delta = \tilde{O}\left( \frac{n}{m^4} \right)^{1/6}$ and it will pass with probability at least $1 - \frac{1}{n^{\dnfn}} - \frac{1}{n^{\dpa}}$. Moreover, no incorrect balance will pass with probability at least $\frac{1}{n^{\dnfp}}$, so all obtained confidence intervals are valid, and their intersection is of size $\tilde{O}\left( \frac{n}{m^4} \right)^{1/6}$ with probability $1 - \frac{1}{n^{\dnfn}} + \frac{1}{n^{\dnfp}} + \frac{1}{n^{\dpa}} \ge 1- \frac{1}{n^{\delta}}$.
\end{proof}

\begin{restatable}[Subset-of-Signals: Small $m$]{theorem}{theoremsubsmall}\label{theorem:est-small-n}
    For any constant $\delta$, there exists a constant $C'$ such that when $m \in [C' \log(n),n^{1/4}]$, \cref{algo:estimate} attains error $\tilde{O}\left( \frac{n}{m^4} \right)^{1/2}$ with probability at least $1 - \frac{1}{n^{\delta}}$.
\end{restatable}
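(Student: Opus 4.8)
The plan is to instantiate the template of the proof of \cref{theorem:est-large-n}, substituting \cref{lemma:real-small-n} for \cref{lemma:real-large-n} and keeping track of the extra dyadic scales that arise when $m$ is small. First I fix the internal error budgets: given the target exponent $\delta$, pick constants $\dnfp,\dnfn,\dpa$ (say, each equal to $\delta+2$) so that $\frac{1}{n^{\dnfp}}+\frac{1}{n^{\dnfn}}+\frac{1}{n^{\dpa}}\le\frac{1}{n^{\delta}}$ for all large $n$. It also suffices to replace $m$ by the largest power of $2$ at most $m$ (this changes the target error $\tilde{O}\left(\frac{n}{m^4}\right)^{1/2}$ by only a constant factor), and, since \cref{algo:estimate} is scale-invariant, to assume $\sigma_m\asymp 1$, so that every width that will arise lies within a $\mathrm{poly}(n)$ factor of some $\sigma_{2^i}$.

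Next I assemble the ingredients. \cref{lemma:no-wrong} applied with $\dnfp$ yields a constant $C_{\dfp}$ such that, with probability at least $1-\frac{1}{n^{\dnfp}}$, \emph{every} $(w,\Delta,\muh)$-balance with $\mu\notin[\muh-\Delta,\muh+\Delta]$ fails; \cref{lemma:all-good-are-good} applied with $\dnfn$ and this $C_{\dfp}$ yields a constant $C_1$ such that, with probability at least $1-\frac{1}{n^{\dnfn}}$, every $C_1$-good balance passes. Now apply \cref{lemma:real-small-n} with $2C_1$ in the role of its ``$C_1$'': this produces the constant $C'$ appearing in the statement and guarantees that, for $m\in[C'\log n,n^{1/4}]$, there is a $(w,\Delta,\mu)$-balance that is $2C_1$-good with $\Delta=\tilde{O}\left(\frac{n}{m^4}\right)^{1/2}$. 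Inspecting \cref{lemma:real-small-n}, the witnessing $w$ is either $\infty$ or a power of $2$ in $[1,\Delta]$, with $\Delta$ itself a power of $2$ in that range; granting $\sigma_m\asymp 1$, both $w$ and $\Delta$ are then within a $\mathrm{poly}(n)$ factor of some $\sigma_{2^i}$, so by \cref{corr:pows-ok}, except with probability $\frac{1}{n^{\dpa}}$, the set $\spa$ returned by $\operatorname{Generate-Tests}$ contains a $w'\in[w,2w]$ and a $\Delta'\in[\Delta,2\Delta]$. By \cref{lemma:perturb-params} the $(w',\Delta',\mu)$-balance is $C_1$-good, and $\operatorname{Sweep-Test}(w',\Delta')$ is one of the $\tilde{O}(1)$ tests \cref{algo:estimate} runs. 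On the intersection of the three good events above, this $C_1$-good balance passes and contributes an interval $[\muh-\Delta',\muh+\Delta']$ of width $2\Delta'=\tilde{O}\left(\frac{n}{m^4}\right)^{1/2}$ that contains $\mu$, while (by \cref{lemma:no-wrong}) every passing balance yields an interval containing $\mu$; hence $\conf$ is nonempty, contains $\mu$, and has width $\tilde{O}\left(\frac{n}{m^4}\right)^{1/2}$, with failure probability at most $\frac{1}{n^{\dnfp}}+\frac{1}{n^{\dnfn}}+\frac{1}{n^{\dpa}}\le\frac{1}{n^{\delta}}$.

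\textbf{Main obstacle.} The step requiring the most care is the scale-matching above. In the large-$m$ proof only the two widths $w\in\{1,\infty\}$ occur, whereas \cref{lemma:real-small-n} may invoke balances at every dyadic scale $w=2^j$ up to $\Delta=\mathrm{poly}(n)$; one must therefore check that $\operatorname{Generate-Tests}$ simultaneously brackets all of these from above to within a factor of $2$ (so that \cref{lemma:perturb-params} applies), which is exactly where the reduction to $\sigma_m\asymp 1$ and the ``$r_{2^i}$ approximates $\sigma_{2^i}$ up to $\mathrm{poly}(n)$'' guarantee of \cref{lemma:pows-ok} get used. The remaining work is the same bookkeeping of the $\dnfp,\dnfn,\dpa$ error budgets as in \cref{theorem:est-large-n}.
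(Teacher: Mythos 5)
Your proof takes essentially the same approach as the paper. The paper's own proof of this theorem is only two sentences, pointing back to the proof of \cref{theorem:est-large-n} with \cref{lemma:real-small-n} substituted for \cref{lemma:real-large-n}; you correctly instantiate that template, including the key point (which the paper leaves implicit) that the additional dyadic scales $w=2^j$ produced by Case 3 of \cref{lemma:real-small-n} are all within a $\mathrm{poly}(n)$ factor of $\sigma_m$ and hence covered by $\operatorname{Generate-Tests}$ via \cref{corr:pows-ok}.
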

\begin{proof}
    The proof is exactly the same as \cref{theorem:est-large-n}, with all occurrences of $\Delta$ changed to $\tilde{O}\left( \frac{n}{m^4} \right)^{1/2}$, and by invoking \cref{lemma:real-small-n} instead of \cref{lemma:real-large-n}.
\end{proof}

Together, \cref{theorem:est-large-n,theorem:est-small-n} imply \cref{theorem:sub-opt}.

\textbf{Removing parameters. } As an aside, we informally note that it is possible to get guarantees of this form, even without any parameters ($C_{\dfp}, C_{\dpa}$) appearing in the algorithm itself. First, we observe that $C_{\dpa}$ is purely used in \textsc{Generate-Tests} whose purpose is to intelligently select a set of $\tilde{O}(1)$ balances to test. Alternatively, we could more slowly test \emph{all} balances. For the same reasoning as demonstrated in \cref{subsub:unif}, there is a polynomially-bounded number of balances that perform differently on the realized samples. Consider how a right shift performs for a particular balance: a prefix of samples will be to the left of $\muh + \Delta - w$, then a contiguous range will be within $[\muh+\Delta-w,\muh+\Delta]$, then a contiguous range will be within $[\muh + \Delta, \muh + \Delta + w]$, and finally a suffix will be to the right of $\muh + \Delta + w$. Meaning, there are $O(n^3)$ possibilities for how the samples are treated with respect to the right shift. Similarly, there are $O(n^3)$ possibilities for how the samples are treated with respect to the left shift, and thus $O(n^6)$ possibilities for how the samples affect a balance test. For each possible balance test that is consistent with a particular one of these $O(n^6)$ possibilities, they will all either pass or fail for a given $C_{\dfp}$, and the confidence intervals yielded are only affected by the minimal possible consistent $\muh + \Delta$, and the maximal consistent $\muh - \Delta$ (these can be computed in $O(1)$ time). Accordingly, one algorithm that removes dependence on $C_{\dpa}$ is to try all $O(n^6)$ important balance tests, and each test is attained in a way that already knows the relevant values to be processed in $O(1)$ time, thus running in $O(n^6)$ total time. 

Further, we can also remove $C_{\dfp}$. For each possible balance test $\tau$, let $C_\tau$ be the largest value of $C_{\dfp}$ for which this balance test would pass. Note how our current algorithm performs equivalently to intersecting all confidence intervals from tests whose $C_\tau \ge C_{\dfp}$. We could instead modify our algorithm to process all balance tests in non-increasing order of $C_{\tau}$, intersecting each yielded confidence interval in this order, until we consider a confidence interval that does not intersect our current confidence interval, after which we return an arbitrary estimate in our current confidence interval. For any possible value of $C_{\dfp}$, this modified algorithm would behave consistently with the fixed-parameter version until considering $\tau$ where $C_\tau < C_{\dfp}$, after which our algorithm only would return something within the confidence interval of the fixed-parameter version, thus attaining the same guarantee. 

In summary, this modification would contain no parameters and simultaneously attain our guarantees in terms of all $\delta$, at the cost of a slower running time of $\tilde{O}(n^6)$ time.

\section{Estimation in Multiple Dimensions}

In this section, we focus on estimation with $d$-dimensional observations. Recall how each $X_i \sim N(\mu,\Sigma_i)$, where $\mu$ is a $d$-dimensional vector and $\Sigma_i$ is a $d$-dimensional covariance matrix. In particular, we focus on the setting with each $\Sigma_i = \sigma_i^2 I$. As the previously discussed result of \cite{chierichetti2014learning} (in their Theorem 5.2) attains almost known-variance rates when $d = \Omega(\log(n))$, we focus on showing that it is possible to get such rates even when $d = 2$, yielding improved rates when $d = o(\log(n))$. Formally, we define our benchmark of almost known-variance rates as:

\begin{definition}
    $R(\sigma) \triangleq  \sqrt{\frac{1}{\sum_{j=2}^n \frac{1}{\sigma_j^2}}}$
\end{definition}

Moreover, we show a relationship with this closed form that is easier to work with:

\begin{lemma}
    $\min_{2 \le 2^i \le n} \frac{\sigma_{2^i}}{\sqrt{2^i}} = \tilde{O}(R(\sigma))$
\end{lemma}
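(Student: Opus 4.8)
The plan is to prove the bound in the direction that is actually asserted, namely $\min_{2 \le 2^i \le n} \frac{\sigma_{2^i}}{\sqrt{2^i}} \le \tilde{O}(R(\sigma))$, by a dyadic decomposition of the sum defining $R(\sigma)^{-2}$ followed by a pigeonhole step; the reverse inequality $R(\sigma) \le O(1)\cdot\min_{2\le 2^i\le n}\frac{\sigma_{2^i}}{\sqrt{2^i}}$ is immediate and costs no logarithmic factor, so if one wants ``$=\tilde\Theta$'' both are in hand.

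For the main direction, first I would partition the index set $\{2,3,\dots,n\}$ into the dyadic blocks $B_i \eqdef \{\, j : 2^i \le j < 2^{i+1}\,\}\cap[2,n]$ for $i = 1,\dots,\lfloor\log_2 n\rfloor$. These are disjoint and cover $\{2,\dots,n\}$ exactly, and crucially each $B_i$ simultaneously (a) has $|B_i|\le 2^i$ and (b) contains only indices $j\ge 2^i$, so that $\sigma_j \ge \sigma_{2^i}$ by monotonicity of the $\sigma$'s. Hence $\sum_{j\in B_i}\frac{1}{\sigma_j^2} \le \frac{2^i}{\sigma_{2^i}^2}$, and summing over $i$ gives $\sum_{j=2}^{n}\frac{1}{\sigma_j^2} \le \sum_{i=1}^{\lfloor\log_2 n\rfloor}\frac{2^i}{\sigma_{2^i}^2} \le \lfloor\log_2 n\rfloor\cdot\max_{1\le i\le\lfloor\log_2 n\rfloor}\frac{2^i}{\sigma_{2^i}^2}$. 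Then pigeonhole yields an $i^\star$ with $\frac{2^{i^\star}}{\sigma_{2^{i^\star}}^2} \ge \frac{1}{\lfloor\log_2 n\rfloor}\sum_{j=2}^{n}\frac{1}{\sigma_j^2} = \frac{1}{\lfloor\log_2 n\rfloor\cdot R(\sigma)^2}$, which rearranges to $\frac{\sigma_{2^{i^\star}}}{\sqrt{2^{i^\star}}} \le \sqrt{\lfloor\log_2 n\rfloor}\cdot R(\sigma)$; since $2\le 2^{i^\star}\le n$ this index is admissible in the minimum, giving the claim. For the reverse direction I would note that for any power of two $k$ with $2\le k\le n$, monotonicity gives $\sum_{j=2}^{n}\frac{1}{\sigma_j^2}\ge\sum_{j=2}^{k}\frac{1}{\sigma_j^2}\ge\frac{k-1}{\sigma_k^2}\ge\frac{k}{2\sigma_k^2}$, whence $R(\sigma)^2 \le 2\cdot\frac{\sigma_k^2}{k}$ for every such $k$.

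There is essentially no hard step here. The only things to watch are the endpoint bookkeeping of the dyadic ranges (that the $B_i$ genuinely cover $\{2,\dots,n\}$, and that the smallest admissible power of two is $2=2^1$ so the sum starting at $j=2$ is exactly the one the $B_i$ tile, and $\sigma_1$ never enters), and whether one writes $\lfloor\log_2 n\rfloor$ or $\lceil\log_2 n\rceil$ — neither matters inside the $\tilde{O}$. One could equally phrase the blocking so that block $i$ is ``assigned to'' $\sigma_{2^{i-1}}$ with $B_i=(2^{i-1},2^i]$; the choice above just makes the index of the representative $\sigma$ coincide with the right-hand power of two, so that the admissibility $2\le 2^{i^\star}\le n$ is transparent.
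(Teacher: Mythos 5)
Your proposal is correct and follows essentially the same argument as the paper: a dyadic grouping of $\{2,\dots,n\}$ using monotonicity of the $\sigma_j$'s to bound $\sum_{j=2}^n \sigma_j^{-2}$ by $\sum_i 2^i/\sigma_{2^i}^2$, then a $\log n$-times-max (pigeonhole) step. The extra reverse inequality you include is fine but not needed for the stated claim.
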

\begin{proof}
    \begin{align}
        & R(\sigma) \\\\
        & \triangleq \sqrt{\frac{1}{\sum_{i=2}^n \frac{1}{\sigma_i^2}}} \\
        & \ge \sqrt{\frac{1}{\sum_{i=1}^{\lfloor \log(n) \rfloor}\frac{2^i}{\sigma_{2^i}^2}}} \\ 
        & \ge \sqrt{\frac{1}{\log(n) \max_{2 \le 2^i \le n} \frac{2^i}{ \sigma_{2^i}^2}}} \\
        & = \frac{1}{\sqrt{\log(n)}} \cdot \min_{2 \le 2^i \le n} \frac{\sigma_{2^i}}{\sqrt{2^i}}
    \end{align}
\end{proof}

Establishing this simpler closed-form as our goal, we will design algorithms using balance-testing to obtain desired rates. In \cref{sec:multi-alg}, we introduce the modified algorithm. In \cref{sec:multi-est}, we show it attains the desired estimation guarantee in terms of $\sigma_i$ for $i \ge C' \log(n)$ with a sufficiently large $C'$. In \cref{sec:multi-small}, we adjust our algorithm to obtain desired guarantees in terms of smaller $i$ as well.

\subsection{Algorithmic Approach}\label{sec:multi-alg}

We begin by recalling the sketch of an algorithmic idea outlined in \cref{sec:over-high}:
\begin{itemize}
    \item Consider a guess for the mean $\muh = \muh_1,\muh_2$.
    \item Filter all $X_j$ whose observation in the first dimension is farther than $\sigma_i$ from $\muh_1$.
    \item With the filtered points in the second dimension, perform balance testing around $\muh_2$. 
\end{itemize}

The positive intuition about this sketch is that filtering helps make testing for balance easier. If we calculated how often a sample $X_j$ with large $\sigma_j$ would ``interfere'' with a balance test at the scale of $\sigma_i$ in the 1-dimensional setting: it would land in $[\mu - \sigma_i,\mu + \sigma_i]$ with probability $\Theta(\frac{\sigma_i}{\sigma_j})$. However, in the 2-dimensional setting, this probability is much smaller given our filtering, and is accordingly $\Theta\left(\left(\frac{\sigma_i}{\sigma_j}\right)^2\right)$. This difference is crucially what enables us to obtain known-variance rates. However, this sketch is quite vague. For example, we do not actually want to enumerate over all values of $\muh_1,\muh_2$, and we also are not told the value of $\sigma_i$ to use. 

Our concrete algorithmic idea is to consider all $O(n^2)$ possible filters. Let $Y_1^d \le Y_2^d \le \dots \le Y_n^d$ denote the sorted list of the realized samples in the $d$-th dimension. Additionally, let $\other(Y_i^d)$ denote the corresponding sample in the other dimension. For each pair of $i,j$, we will consider just the samples within $[Y_i^1,Y_j^1]$ and use our 1-dimensional algorithm among $\{ \other(Y_k^1) | Y_k^1 \in [Y_i^1, Y_j^1]\}$ to test for balance in the second dimension. If we intersect all the confidence intervals we obtained in the $O(n^2)$ instances of 1-dimensional mean estimation, we will show that we find a desirable estimate of $\mu_2$. To obtain an estimate of $\mu_1$, by symmetry we can filter based on the second dimension and estimate $\mu_1$. We formally define the procedure in \cref{algo:multi}.\footnote{An important technical detail is that inside the 1-dimensional instances, all occurrences of $n$ remain the number of samples in the original 2-dimensional instance, \emph{$n$ is not the number of samples that were passed to the 1-dimensional instance}. Additionally (this part is only for simplicity of analysis), use all samples, not just the filtered samples, for \textsc{Generate-Tests} within each subroutine.} This approach naively runs in $\tilde{O}(n^3)$ time.

\begin{algorithm}[t]
    \caption{2-Dimensional Estimation-Algorithm} \label{algo:multi}
    \hspace*{\algorithmicindent} 
    \begin{flushleft}
      {\bf Input:} $Y_1^1 \le \dots \le Y_n^1$ and $Y_1^2 \le Y_2^2 \le \dots \le Y_n^2$ \\
      {\bf Output:} Ranges  $\conf^1, \conf^2$ (can choose arbitrary estimates $\muh_1 \in \conf^1$, $\muh_2 \in \conf^2$) \\
    \end{flushleft}
    \begin{algorithmic}[1]
    
    \Procedure{Multi-Estimation-Algorithm}{$X_1 \le \dots \le X_n$}:
    \State $\conf^1 \gets [-\infty, \infty], \conf^2 \gets [-\infty,\infty]$
    \Comment{Intervals we are confident $\mu_1,\muh_2$ is within.}
    \For{$\dest \in \{1, 2\}$} \Comment{Estimating $\mu_{\dest}$.}
        \State $\dfil \gets \{1, 2 \} \setminus \dest$ \Comment{Using the other dimension $\dfil$ to filter.}
        \For{$i<j \in [n]$} \Comment{Enumerate over possible filters.}
            \State $S_{\textrm{filtered}} \gets \{ \other(Y_k^{\dfil}) | i \le k \le j\}$
            \State $\conf^{\dest} \gets \conf^{\dest} \cap \operatorname{Estimation-Algorithm}(S_{\textrm{filtered}})$ \Comment{Intersect 1-dimensional confidence.}
        \EndFor
    \EndFor
    \Return $\conf^1, \conf^2$ \Comment{Can estimate $\muh_1,\muh_2$ as any arbitrary value in $\conf^1,\conf^2$.}
    \EndProcedure
    \end{algorithmic}
\end{algorithm}

\subsection{Estimation Error}\label{sec:multi-est}

We now walk through a similar process to how we bounded estimation error in 1-dimension. Our main ingredients will be (i) showing there are still no incorrect confidence intervals, (ii) showing that one of the filters has a $C_1$-good balance with a desirable $\Delta$ for any constant $C_1$, and (iii) concluding that the algorithm will find this balance. Steps (i) and (iii) will be rather immediate, while (ii) will be the main thrust.

\underline{No incorrect balance intervals.} We begin by concluding step (i) that there are no incorrect confidence intervals:

\begin{corollary}\label{corr:no-wrong}
    For a fixed dimension, let $\mathcal{F}_{bad}$ denote the set of all $(w,\Delta,\muh)$-balances where $\mu \notin [\muh - \Delta, \muh + \Delta]$. For any constant $\dnfp$, there exists a constant $C_{\dfp}$ such that \underline{all} $(w,\Delta,\muh)$-balance tests in $\mathcal{F}_{bad}$ will fail with probability at least $1- \frac{1}{n^{\dnfp}}$.
\end{corollary}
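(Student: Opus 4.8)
The plan is to deduce this from the one-dimensional statement \cref{lemma:no-wrong} by a conditioning argument. Fix the dimension $\dest$ in question and let $\dfil$ be the other (filtering) dimension. I would condition on the realized coordinates $\{X_k^{\dfil}\}_{k \in [n]}$. Since each $X_k \sim N(\mu, \sigma_k^2 I)$ has independent coordinates and the $X_k$ are mutually independent, after this conditioning the coordinates $\{X_k^{\dest}\}_{k \in [n]}$ remain mutually independent with $X_k^{\dest} \sim N(\mu_{\dest}, \sigma_k^2)$ — precisely the one-dimensional model. Crucially, the conditioning also fixes, for every window $i < j$, exactly which set $S^{(i,j)}$ of samples gets passed to the one-dimensional subroutine (namely, the samples whose $\dfil$-coordinate lies between the $i$-th and $j$-th order statistics), so each $S^{(i,j)}$ is now a deterministic subset of $[n]$ of size at most $n$.

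For each fixed window $(i,j)$, running the sweep-line over all $(w,\Delta,\muh)$-balance tests on the $\dest$-coordinates of $S^{(i,j)}$ is exactly an instance of the setup of \cref{subsub:unif}: each sample in $S^{(i,j)}$ has a symmetric Gaussian density centered at $\mu_{\dest}$, a balance test is still a function of each sample with range of size at most $9$, and the thresholds in \cref{algo:test} still use the original $n$ (by the footnote to \cref{algo:multi}). Hence I would reuse the proof of \cref{lemma:no-wrong} essentially verbatim: \cref{claim:bad-small} still gives $\max_f \Pr_X[V(f(X)_{18})=1] \le 2 n^{-C_{\dfp}/8}$, and \cref{lemma:k-rep} with $k = 18$, $|\mathcal{Y}| \le 9$, and $|S^{(i,j)}| \le n$ bounds the probability that some $\mathcal{F}_{bad}$-test on window $(i,j)$ passes by $(9n)^{18} \cdot 2 n^{-C_{\dfp}/8}$. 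Taking $C_{\dfp}$ a large enough constant (depending only on $\dnfp$) makes this at most $n^{-(\dnfp + 2)}$, and a union bound over the fewer than $n^2$ windows gives failure of all $\mathcal{F}_{bad}$-tests with probability at least $1 - \frac{1}{n^{\dnfp}}$. Since the bound holds for every realization of the conditioned coordinates, it holds unconditionally.

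I expect the only real subtlety to be the conditioning step itself: one must use that $\Sigma_i = \sigma_i^2 I$ is isotropic, so that revealing dimension $\dfil$ leaves the dimension-$\dest$ coordinates independent and distributed exactly as in the one-dimensional model, and one must observe that the $O(n^2)$ adaptively-chosen filter windows become deterministic once the filter dimension is revealed, so that a plain union bound over windows is legitimate and no further uniform-convergence machinery is needed across windows. An alternative route that avoids conditioning is to check directly that the class of ``filter-then-balance-test'' functions satisfies $22$-representative labeling — pin down the filter window with the two samples immediately inside and immediately outside each of its two endpoints (in sorted $\dfil$-order), then pin down the $\le 9$ balance regions with their first and last elements exactly as in \cref{subsub:unif} — and then apply \cref{lemma:k-rep} a single time with $k = 22$; this yields the same conclusion with essentially the same per-test bound.
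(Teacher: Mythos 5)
Your proposal is correct and matches the paper's argument: the paper likewise observes that each filtered one-dimensional instance consists of samples selected independently of their realization in the estimation dimension, invokes \cref{lemma:no-wrong} per instance, and absorbs an extra factor of $n^2$ into $C_{\dfp}$ to union bound over all filters. Your write-up simply makes the conditioning step and the role of the isotropy assumption explicit, and the alternative $22$-representative-labeling route you sketch is a valid but unnecessary variant.
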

\begin{proof}
    Note how every 1-dimensional instance is a collection of samples who are selected independent of their realization in that dimension. Thus, this claim follows immediately from \cref{lemma:no-wrong}, and adjusting $C_{\dfp}$ to provide an extra factor of $n^2$ to union bound over all instances.
\end{proof}

\underline{Existence of good balance after filtering.} For (ii), we aim to show there is a filter such that its corresponding 1-dimensional instance has a $C_1$-good balance with the desired $\Delta$:

\begin{lemma} \label{lemma:log1-bal}
    For any constant $C_1$ there exists a constant $C'$ such that there exists a filter and $(w,\Delta,\mu)$-balance with $\Delta= \tilde{O}\left(\min_{C' \log(n) \le 2^i \le n} \frac{\sigma_{2^i}}{\sqrt{2^i}}\right)$ that is $C_1$-good. 
\end{lemma}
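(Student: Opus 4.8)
The plan is to reduce to the one-dimensional existence statements of \cref{sub:exist}: I single out one filter whose induced one-dimensional instance behaves like a homoskedastic instance with $\tilde{\Theta}(k)$ samples of variance $\sigma^{*2}$, where $k \triangleq 2^{i^{*}}$, $\sigma^{*}\triangleq\sigma_{2^{i^{*}}}$, and $i^{*}\triangleq\argmin_{C'\log(n)\le 2^i\le n}\frac{\sigma_{2^i}}{\sqrt{2^i}}$. By symmetry it suffices to treat $\dest=2,\dfil=1$. The filter I use is the one retaining exactly the samples whose first coordinate lies in $[\mu_1-\sigma^{*},\mu_1+\sigma^{*}]$: since this retained set is a contiguous block of the first-coordinate order statistics, it equals $\{\,\other(Y_k^1):i\le k\le j\,\}$ for some $i\le j$, i.e.\ it is one of the $O(n^2)$ filters the algorithm enumerates, provided it is nonempty --- and by a Chernoff bound (using $k\ge C'\log(n)$) with probability $\ge 1-n^{-\Omega(C')}$ at least a constant fraction of the $k$ samples $X_1,\dots,X_k$ have first coordinate in the window. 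Conditioning on the first coordinates fixes this filtered set, while the corresponding second coordinates remain an independent family $\{N(\mu_2,\sigma_i^2)\}$; throughout, $\cnt(\cdot)$ and the expectations of \cref{def:goodness} refer to this conditional law of the second coordinates, and the existence claim is read as holding with high probability over the first coordinates. I claim $(w,\Delta,\mu_2)=(\sigma^{*},\Delta^{*},\mu_2)$ with $\Delta^{*}=\tilde{\Theta}(\sigma^{*}/\sqrt{k})$ is $C_1$-good for this instance.

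The key structural point --- and the reason two dimensions suffice --- is that filtering \emph{squares} the rate at which a large-variance sample $X_j$ (among $X_{k+1},\dots,X_n$, so $\sigma_j\ge\sigma^{*}$) can pollute the count near $\mu_2$: $X_j$ survives the filter with probability $\Theta(\sigma^{*}/\sigma_j)$ and then independently lands in $[\mu_2-\sigma^{*},\mu_2+\sigma^{*}]$ with probability $\Theta(\sigma^{*}/\sigma_j)$, contributing $\Theta\!\big((\sigma^{*}/\sigma_j)^2\big)$ to $\Ex[\cnt(\mu_2-\sigma^{*},\mu_2+\sigma^{*})]$. Hence the total such contribution is in expectation $\Theta\!\big(\sigma^{*2}\sum_{j>k}1/\sigma_j^2\big)$, and a dyadic grouping exploiting the minimality of $i^{*}$ --- each block $j\in(2^{\ell-1},2^{\ell}]$ with $2^{\ell-1}\ge k$ contributes at most $2^{\ell-1}/\sigma_{2^{\ell-1}}^2\le k/\sigma^{*2}$ since $\sigma_{2^{\ell-1}}/\sqrt{2^{\ell-1}}\ge\sigma^{*}/\sqrt{k}$, and there are $O(\log n)$ blocks --- yields $\sigma^{*2}\sum_{j>k}1/\sigma_j^2=\tilde{O}(k)$. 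A Bernstein-type bound (this contribution, as a function of the first coordinates, is a sum of independent summands each $O(1)$ with expectation $\tilde{O}(k)$) makes it $\tilde{O}(k)$ with high probability. Combined with the $\Omega(k)$ retained good samples $X_i$ ($\sigma_i\le\sigma^{*}$), each contributing $\Theta(1)$ to $\Ex[\cnt(\mu_2-\sigma^{*},\mu_2+\sigma^{*})]$, we get $\Ex[\cnt(\mu_2-\sigma^{*},\mu_2+\sigma^{*})]=\tilde{\Theta}(k)$, which exceeds $C_1\log(n)$ once $C'$ is large --- the first condition of $C_1$-goodness.

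For the second condition I lower-bound $\Ex[\cnt(\mu_2,\mu_2+\Delta^{*})-\cnt(\mu_2+\sigma^{*},\mu_2+\sigma^{*}+\Delta^{*})]$. A retained good $X_i$ has second-coordinate standard deviation $\tau_i\le\sigma^{*}$, and its $N(0,\tau_i^2)$ density at $\sigma^{*}+x$ is at most $e^{-1/2}$ times its value at $x$ for every $x\ge 0$ (because $\sigma^{*2}/\tau_i^2\ge 1$), so its contribution is at least $(1-e^{-1/2})\Pr[N(0,\tau_i^2)\in(0,\Delta^{*})]=\Omega(\min(1,\Delta^{*}/\sigma^{*}))$; a large-variance sample contributes nonnegatively since its density decreases away from $\mu_2$. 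Thus the left side is $\Omega\!\big(k\min(1,\Delta^{*}/\sigma^{*})\big)$, whereas the right side of the condition is $\sqrt{C_1\log(n)\cdot\tilde{\Theta}(k)}=\tilde{O}(\sqrt{C_1 k})$. Choosing $\Delta^{*}=\tilde{\Theta}(\sqrt{C_1}\,\sigma^{*}/\sqrt{k})$ --- which is $\le\sigma^{*}$ for $C'$ large --- makes the inequality hold, and by the choice of $i^{*}$ this $\Delta^{*}$ equals $\tilde{\Theta}(\sigma^{*}/\sqrt{k})=\tilde{O}\!\big(\min_{C'\log(n)\le 2^i\le n}\frac{\sigma_{2^i}}{\sqrt{2^i}}\big)$, as required; the case $\dest=1$ is identical by symmetry.

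The step I expect to be the main obstacle is not any single inequality but the handling of the two layers of randomness: the filtered instance is itself random, so $C_1$-goodness must be phrased conditionally on the filtering coordinates, and the large-variance contribution must be controlled \emph{with high probability} rather than merely in expectation --- which is precisely where the dyadic sum estimate from the minimality of $i^{*}$ together with a concentration bound for a sum of independent bounded contributions are both needed. Everything after that is a scale-invariant rerun of the homoskedastic balance computation of \cref{lemma:toy-large-n} with good variance level $\sigma^{*2}$ and effective sample count $\tilde{\Theta}(k)$.
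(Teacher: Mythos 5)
Your filtering and goodness calculations for a single scale are correct and match the paper's setup: the filter $[\mu_1-\sigma^*,\mu_1+\sigma^*]$ is one of the enumerated filters, conditioning on first coordinates fixes the filtered set while keeping the second coordinates an independent family, the dyadic grouping with minimality of $i^*$ gives $\sigma^{*2}\sum_{j>k}1/\sigma_j^2=\tilde{O}(k)$, and the density-ratio estimate for the second goodness condition is sound. However, there is a genuine gap in the final step: you choose $\Delta^*=\tilde{\Theta}(\sqrt{C_1}\,\sigma^*/\sqrt{k})$ and assert it is $\le\sigma^*$ ``for $C'$ large.'' Unwinding the $\tilde{\Theta}$: the filtered count is $\Theta(k\log n)$ in the worst case (the dyadic sum has $\Theta(\log n)$ comparable terms), so the right side of \cref{def:goodness} is $\Theta(\log(n)\sqrt{C_1 k})$ and the required $\Delta^*$ is $\Theta(\sigma^*\log(n)\sqrt{C_1/k})$. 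The constraint $\Delta^*\le\sigma^*$ — without which the lower bound $\Omega(k\,\Delta^*/\sigma^*)$ on the left side degenerates to $\Omega(k)$ and the inequality becomes circular — therefore forces $k\ge\Theta(C_1\log^2 n)$, not $k\ge C'\log(n)$. Since $C'$ must be a constant, no choice of $C'$ fixes this when $k\in[C'\log n, C'\log^2 n)$: increasing $C'$ only shrinks $\sqrt{C_1/C'}$ while the $\log(n)$ factor is unbounded.

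This is exactly the case the paper's proof spends most of its effort on. The paper first establishes your argument for $2^{i^*}\ge C'\log^2(n)$, then observes that to extend down to $C'\log(n)$ one must exploit a dichotomy: either the minimum over $\{2^i\ge C'\log(n)\}$ is within a $1/\log(n)$ factor of the minimum over $\{2^i\ge C'\log^2(n)\}$ (so the already-proved case yields the bound), or else the contributions to the filtered count from scales $\ge C'\log^2(n)$ are suppressed by an extra $1/\log(n)$ factor, which reduces the count to $\Theta(k\log\log n)$ and relaxes the $\Delta^*\le\sigma^*$ constraint. This argument is then iterated — from $\log^2(n)$ to $\log(n)\log\log(n)$ to (inductively) $\log(n)$, paying a $\log\log\log(n)$-type factor at each of the $O(\log\log\log n)$ stages, which still totals $\tilde{O}(1)$. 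Your proposal would need this bootstrapping structure (or some equivalent way to control the count at scales $2^i\in[C'\log n,C'\log^2 n)$) to cover the full range claimed by the lemma.
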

\begin{proof}

    Let us consider trying to show the existence of a good balance with $\Delta = \tilde{O}(\frac{\sigma_{2^i}}{\sqrt{2^i}})$ for each $i$. This would imply our desired goal. For a particular $i$, we will consider the filter that only keeps samples within $[\mu_{\dfil} - \sigma_{2^i}, \mu_{\dfil} + \sigma_{2^i}]$. Now, we observe the conditions of goodness for a $(\sigma_{2^i},\Delta,\mu_{\dest})$-balance.

    For the first condition: 
    \begin{align}
        & \Ex[\cnt(\mu - \sigma_{2^i}, \mu + \sigma_{2^i})] \ge C_1 \log(n) \\
        & \impliedby \Omega(1) \cdot 2^i \ge C_1 \log(n) \\
        & \impliedby \Omega(1) \cdot C' \log(n) \ge C_1 \log(n) \\
        & \impliedby C' \ge O(1) \cdot C_1
    \end{align}

     This also holds for sufficiently large $C'$. The second condition will require slightly more machinery. We will first separately consider showing our guarantee among $2^i \ge C' \log^2(n)$, then $2^i \ge C' \log(n) \cdot \log(\log(n))$, and then finally $2^i \ge C' \log(n)$. First, for $2^i \ge C' \log^2(n)$, let $i^* = \argmin_{2^i \ge C' \log^2(n)}\frac{\sigma_{2^i}}{\sqrt{2^i}}$. Then, as long as $\Delta \le \sigma_{2^{i^*}}$ we satisfy the second condition by:

    \begin{align}
        & \Ex[\cnt(\mu,\mu + \Delta) - \cnt(\mu + \sigma_{2^{i^*}} + \Delta)] \ge \sqrt{C_1 \log(n) \Ex[\cnt(\mu - \sigma_{2^{i^*}}, \mu + \sigma_{2^{i^*}})]} \\
        & \impliedby \Omega(1) \cdot \frac{\Delta}{\sigma_{2^{i^*}}} \cdot 2^{i^*} \ge O(1) \cdot \sqrt{C_1 \log(n) \left( 2^{i^*} + \sum_{j=2^{i^*}}^{n } \left(\frac{\sigma_{2^{i^*}}}{\sigma_j} \right)^2 \right) }\\
        & \impliedby \Omega(1) \cdot \frac{\Delta}{\sigma_{2^{i^*}}} \cdot  2^{i^*} \ge O(1) \cdot \sqrt{C_1 \log(n) \left( 2^{i^*} + \sum_{j=i^*+1}^{\lfloor \log(n) \rfloor} 2^j \cdot \left(\frac{\sigma_{2^{i^*}}}{\sigma_{2^j}} \right)^2 \right) } \\
        & \impliedby \Omega(1) \cdot \frac{\Delta}{\sigma_{2^{i^*}}} \cdot  2^{i^*} \ge O(1) \cdot \sqrt{C_1 \log(n) \left( 2^{i^*} + \sigma_{2^{i^*}}^2 \log(n) \cdot \max_{i^* < j \le \lfloor \log(n) \rfloor} \frac{2^j}{\sigma_{2^j}^2} \right) }\\
        & \impliedby \Omega(1) \cdot \frac{\Delta}{\sigma_{2^{i^*}}} \cdot  2^{i^*} \ge O(1) \cdot \sqrt{C_1 \log(n) \left( 2^{i^*} + \sigma_{2^{i^*}}^2 \log(n)  \cdot \frac{2^{i^*}}{\sigma_{2^{i^*}}^2} \right) } \label{step:use-inv-fail}\\
        & \impliedby \Omega(1) \cdot \frac{\Delta}{\sigma_{2^{i^*}}} \cdot  2^{i^*} \ge O(1) \cdot \sqrt{C_1 \log^2(n) 2^{i^*}   }\\
        & \impliedby \frac{\Delta}{\sigma_{2^{i^*}}} \cdot \sqrt{2^{i^*}} \ge O(1) \cdot \sqrt{C_1 \log^2(n)}\\
        & \impliedby \frac{\frac{\sigma_{2^{i^*}}}{\sqrt{2^{i^*}}} \cdot \sqrt{C'\log^2(n)}}{\sigma_{2^{i^*}}} \cdot \sqrt{2^{i^*}} \ge O(1) \cdot \sqrt{C_1 \log^2(n)} \label{step:almost-delta2} \\
        & \impliedby \sqrt{C'} \ge O(1) \cdot \sqrt{C_1}
    \end{align}

    This holds for sufficiently large $C'$. \cref{step:almost-delta2} follows from setting $\Delta = \frac{\sigma_{2^{i^*}}}{\sqrt{2^{i^*}}} \cdot \sqrt{C' \log^2(n)}$, which is valid because $\Delta \le \sigma_{2^{i^*}}$. Moreover, this $\Delta$ satisfies our desire that $\Delta = \tilde{O}(\frac{\sigma_{2^{i^*}}}{\sqrt{2^{i^*}}})$. Thus, there is a filter with a desirable good balance with respect to $i^* \ge C' \log^2(n)$. Next, we show our guarantee for $i \ge C' \log(n) \log(\log(n))$, by considering $i^* = \argmin_{2^i \ge C' \log(n) \log(\log(n))}\frac{\sigma_{2^i}}{\sqrt{2^i}}$. Additionally, let $i'$ be the smallest integer such that $2^{i'} \ge C' \log^2(n)$. If $\frac{\sigma_{2^{i^*}}}{\sqrt{2^{i^*}}} \ge \frac{1}{\log(n)} \cdot \min_{i \ge i'}\frac{\sigma_{2^i}}{\sqrt{2^i}}$, then our guarantee already holds. Otherwise, as long as $\Delta \le \sigma_{2^{i^*}}$ we satisfy the second condition by:

    \begin{align}
        & \Ex[\cnt(\mu,\mu + \Delta) - \cnt(\mu + \sigma_{2^{i^*}} + \Delta)] \ge \sqrt{C_1 \log(n) \Ex[\cnt(\mu - \sigma_{2^{i^*}}, \mu + \sigma_{2^{i^*}})]} \\
        & \impliedby \Omega(1) \cdot \frac{\Delta}{\sigma_{2^{i^*}}} \cdot 2^{i^*} \ge O(1) \cdot \sqrt{C_1 \log(n) \left( 2^{i^*} + \sum_{j=2^{i^*}}^{n } \left(\frac{\sigma_{2^{i^*}}}{\sigma_j} \right)^2 \right) }\\
        & \impliedby \Omega(1) \cdot \frac{\Delta}{\sigma_{2^{i^*}}} \cdot  2^{i^*} \ge O(1) \cdot \sqrt{C_1 \log(n) \left( 2^{i^*} + \sum_{j=i^*+1}^{\lfloor \log(n) \rfloor} 2^j \cdot \left(\frac{\sigma_{2^{i^*}}}{\sigma_{2^j}} \right)^2 \right) } \\
        & \impliedby \Omega(1) \cdot \frac{\Delta}{\sigma_{2^{i^*}}} \cdot  2^{i^*} \ge O(1) \cdot \sqrt{C_1 \log(n) \left( 2^{i^*} + \sum_{j=i'}^{\lfloor \log(n) \rfloor} 2^j \cdot \left(\frac{\sigma_{2^{i^*}}}{\sigma_{2^j}} \right)^2     + \sum_{j=i^*+1}^{i'-1} 2^j \cdot \left(\frac{\sigma_{2^{i^*}}}{\sigma_{2^j}} \right)^2 \right) } \\
        & \impliedby \Omega(1) \cdot \frac{\Delta}{\sigma_{2^{i^*}}} \cdot  2^{i^*} \ge O(1) \cdot \nonumber \\
        & \sqrt{C_1 \log(n) \left( 2^{i^*} + \sigma_{2^{i^*}}^2 \log(n) \cdot \max_{i' \le j \le \lfloor \log(n) \rfloor} \frac{2^j}{\sigma_{2^j}^2} + \sigma_{2^{i^*}}^2 \log(\log(n)) \cdot \max_{i^{*} \le j < i'} \frac{2^j}{\sigma_{2^j}^2} \right) } \\
        & \impliedby \Omega(1) \cdot \frac{\Delta}{\sigma_{2^{i^*}}} \cdot  2^{i^*} \ge O(1) \cdot \nonumber \\
        & \sqrt{C_1 \log(n) \left( 2^{i^*} + \sigma_{2^{i^*}}^2 \log(n)  \cdot \frac{1}{\log(n)} \cdot \frac{2^{i^*}}{\sigma_{2^{i^*}}^2} +  \sigma_{2^{i^*}}^2 \log(\log(n))  \cdot \frac{2^{i^*}}{\sigma_{2^{i^*}}^2} \right) } \label{step:use-inv-fail5}\\
        & \impliedby \Omega(1) \cdot \frac{\Delta}{\sigma_{2^{i^*}}} \cdot  2^{i^*} \ge O(1) \cdot \sqrt{C_1 \log(n) \log(\log(n)) 2^{i^*}   }\\
        & \impliedby \frac{\Delta}{\sigma_{2^{i^*}}} \cdot \sqrt{2^{i^*}} \ge O(1) \cdot \sqrt{C_1 \log(n) \log(\log(n))}\\
        & \impliedby \frac{\frac{\sigma_{2^{i^*}}}{\sqrt{2^{i^*}}} \cdot \sqrt{C'\log(n) \log(\log(n))}}{\sigma_{2^{i^*}}} \cdot \sqrt{2^{i^*}} \ge O(1) \cdot \sqrt{C_1 \log(n) \log(\log(n))} \label{step:almost-delta5} \\
        & \impliedby \sqrt{C'} \ge O(1) \cdot \sqrt{C_1}
    \end{align}

    This holds for sufficiently large $C'$. \cref{step:use-inv-fail5} holds because $\frac{\sigma_{2^{i^*}}}{\sqrt{2^{i^*}}} \le \frac{1}{\log(n)} \min_{2^i \ge 2^{i'}} \frac{\sigma_{2^{i}}}{\sqrt{2^{i}}}$. \cref{step:almost-delta5} follows from setting $\Delta = \frac{\sigma_{2^{i^*}}}{\sqrt{2^{i^*}}} \cdot \sqrt{C' \log(n) \log(\log(n))}$, which is valid because $\Delta \le \sigma_{2^{i^*}}$. We still desire to show our condition with respect to $i \ge C' \log(n)$. We will consider $2^i$ in decreasing powers of $2$ from $C' \log(n) \log(\log(n))$ to $C' \log(n)$. Let us change $i'$ to be the smallest integer such that $2^{i'} \ge C' \log(n) \log(\log(n))$. 
    Then, our goal is to show that after having considered $k$ powers of $2^i$ in decreasing order, we have found a balance with $\Delta =  \tilde{O}(\min_{2^i \ge 2^{i' - k}} \frac{\sigma_{2^i}}{\sqrt{2^i}}) \cdot (\log(\log(\log(n))))^k$. Recall how we have previously shown that we find a balance with $\Delta = \tilde{O}(\min_{i \ge i'} \frac{\sigma_{2^i}}{\sqrt{2^i}})$. Thus, if $\frac{\sigma_{2^{i'-k}}}{\sqrt{2^{i'-k}}} \ge \frac{1}{\log(n)} \cdot \min_{i \ge i'} \frac{\sigma_{2^i}}{\sqrt{2^i}}$, then our guarantee already holds. 
    Additionally, if $\frac{\sigma_{2^{i'-k}}}{\sqrt{2^{i'-k}}} \ge \frac{1}{\log(\log(\log(n)))} \cdot \min_{2^i > 2^{i'-k}} \frac{\sigma_{2^i}}{\sqrt{2^i}}$, our guarantee also already holds. Otherwise, we can show the second condition holds desirably (in a matter similar to above):

    \begin{align}
        & \Ex[\cnt(\mu,\mu + \Delta) - \cnt(\mu + \sigma_{2^{i'-k}} + \Delta)] \ge \sqrt{C_1 \log(n) \Ex[\cnt(\mu - \sigma_{2^{i'-k}}, \mu + \sigma_{2^{i'-k}})]} \\
        & \impliedby \Omega(1) \cdot \frac{\Delta}{\sigma_{2^{i'-k}}} \cdot 2^{i'-k} \ge O(1) \cdot \sqrt{C_1 \log(n) \left( 2^{i'-k} + \sum_{j=2^{i'-k}}^{n } \left(\frac{\sigma_{2^{i'-k}}}{\sigma_j} \right)^2 \right) }\\
        & \impliedby \Omega(1) \cdot \frac{\Delta}{\sigma_{2^{i'-k}}} \cdot  2^{i'-k} \ge O(1) \cdot \sqrt{C_1 \log(n) \left( 2^{i'-k} + \sum_{j=i'-k}^{\lfloor \log(n) \rfloor} 2^j \cdot \left(\frac{\sigma_{2^{i'-k}}}{\sigma_{2^j}} \right)^2 \right) } \\
        & \impliedby \Omega(1) \cdot \frac{\Delta}{\sigma_{2^{i'-k}}} \cdot  2^{i'-k} \ge O(1) \cdot \nonumber \\
        & \sqrt{C_1 \log(n) \cdot \left( 2^{i'-k} + \sum_{j=i'}^{\lfloor \log(n) \rfloor} 2^j \cdot \left(\frac{\sigma_{2^{i'-k}}}{\sigma_{2^j}} \right)^2  +  \sum_{j=i'-k}^{ i'-1} 2^j \cdot \left(\frac{\sigma_{2^{i'-k}}}{\sigma_{2^j}} \right)^2  \right)} \\
        & \impliedby \Omega(1) \cdot \frac{\Delta}{\sigma_{2^{i'-k}}} \cdot  2^{i'-k} \ge O(1) \cdot \sqrt{C_1 \log(n) } \cdot \nonumber \\
        & \sqrt{ 2^{i'-k} + \sigma_{2^{i'-k}}^2 \cdot \left(\log(n) \cdot \max_{i' \le j \le \lfloor \log(n) \rfloor} \frac{2^j}{\sigma_{2^j}^2} 
 +\log(\log(\log(n))) \cdot \max_{i'-k < j < i'} \frac{2^j}{\sigma_{2^j}^2} \right)} \\
        & \impliedby \Omega(1) \cdot \frac{\Delta}{\sigma_{2^{i'-k}}} \cdot  2^{i'-k} \ge O(1) \cdot \nonumber\\
        & \sqrt{C_1 \log(n) \left( 2^{i'-k} + \sigma_{2^{i'-k}}^2 \cdot \left(\log(n) \cdot \frac{1}{\log(n)} + \log(\log(\log(n))) \cdot \frac{1}{\log(\log(\log(n)))}\right)  \cdot \frac{2^{i'-k}}{\sigma_{2^{i'-k}}^2} \right) } \label{step:use-inv-fail3}\\
        & \impliedby \Omega(1) \cdot \frac{\Delta}{\sigma_{2^{i'-k}}} \cdot  2^{i'-k} \ge O(1) \cdot \sqrt{C_1 \log(n) 2^{i'-k}   }\\
        & \impliedby \frac{\Delta}{\sigma_{2^{i'-k}}} \cdot \sqrt{2^{i'-k}} \ge O(1) \cdot \sqrt{C_1 \log(n)}\\
        & \impliedby \frac{\frac{\sigma_{2^{i'-k}}}{\sqrt{2^{i'-k}}} \cdot \sqrt{C'\log(n)}}{\sigma_{2^{i'-k}}} \cdot \sqrt{2^{i'-k}} \ge O(1) \cdot \sqrt{C_1 \log(n)} \label{step:almost-delta3} \\
        & \impliedby \sqrt{C'} \ge O(1) \cdot \sqrt{C_1}
    \end{align}

    This holds for sufficiently large $C'$. \cref{step:almost-delta3} follows from setting $\Delta = \frac{\sigma_{2^{i'-k}}}{\sqrt{2^{i'-k}}} \cdot \sqrt{C' \log(n)}$, which is valid because $\Delta \le \sigma_{2^{i'-k}}$. Moreover, this $\Delta$ satisfies our desire that $\Delta = \tilde{O}(\frac{\sigma_{2^{i'-k}}}{\sqrt{2^{i'-k}}})$. Meaning, regardless of the conditions, after processing $2^{i'-k}$ we have found a balance with $\Delta =  \tilde{O}(\min_{2^i \ge 2^{i' - k}} \frac{\sigma_{2^i}}{\sqrt{2^i}}) \cdot (\log(\log(\log(n))))^k$. After we have processed all powers of $2$ until $C'\log(n)$, we will have $k = O(\log(\log(\log(n))))$ and thus have found a balance with $\Delta = \tilde{O}(\min_{2^i \ge 2^{i' - k}} \frac{\sigma_{2^i}}{\sqrt{2^i}}) \cdot (\log(\log(\log(n))))^k = \tilde{O}(\min_{2^i \ge 2^{i' - k}} \frac{\sigma_{2^i}}{\sqrt{2^i}}) \cdot (\log(\log(\log(n))))^{O(\log(\log(\log(n))))} = \tilde{O}(\min_{2^i \ge 2^{i' - k}} \frac{\sigma_{2^i}}{\sqrt{2^i}}) \cdot \tilde{O}(1) = \tilde{O}(\min_{2^i \ge C' \log(n)} \frac{\sigma_{2^i}}{\sqrt{2^i}})$.
\end{proof}

\underline{Concluding a good estimate.} These components are enough to conclude our desirable estimation guarantee:

\begin{restatable}{theorem}{theoremestmultilarge}\label{theorem:est-multi-large}
     For $d=2$ and any constant $\delta$, there exists a $C'$ such that  with probability at least $1 - \frac{1}{n^{\delta}}$, \cref{algo:multi} obtains error $\tilde{O}\left(\min\limits_{C' \log(n) \le 2^i \le n} \frac{\sigma_{2^i}}{\sqrt{i}}\right)$.
\end{restatable}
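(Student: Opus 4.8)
The plan is to follow the three‑step template of the one–dimensional combining proof (\cref{theorem:est-large-n}): show that (i) every confidence interval \cref{algo:multi} outputs contains $\mu_{\dest}$, (ii) for the minimizing scale some enumerated filter induces a one‑dimensional instance containing a $C_1$‑good balance at scale $\Delta=\tilde{O}(\sigma_{2^i}/\sqrt{2^i})$, and (iii) the algorithm actually tests a balance close enough to it. First I would fix the error exponents $\dnfp,\dnfn,\dpa$ to be large constants so that the union of all failure events below has probability at most $n^{-\delta}$, chaining the resulting constants $C_\dfp\to C_1\to C'$ through \cref{corr:no-wrong,lemma:all-good-are-good,lemma:log1-bal}. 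On the event of \cref{corr:no-wrong} (whose $O(n^2)$‑instance union bound is absorbed into $C_\dfp$), no $(w,\Delta,\muh)$‑balance with $\mu_{\dest}\notin[\muh-\Delta,\muh+\Delta]$ passes in any instance, so every interval the algorithm intersects is valid; this is step (i).

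For step (ii) I would invoke \cref{lemma:log1-bal}: for the chosen $C_1$ there is a $C'$ such that some filter admits a $(w,\Delta,\mu_{\dest})$‑balance that is $2C_1$‑good with $w=\sigma_{2^i}$ and $\Delta=\tilde{O}\big(\min_{C'\log n\le 2^i\le n}\sigma_{2^i}/\sqrt{2^i}\big)$; since $2^i\ge i$ this is at most the claimed $\tilde{O}(\sigma_{2^i}/\sqrt i)$. The observation that makes this directly usable by \cref{algo:multi} is that the idealized filter of \cref{lemma:log1-bal}, the fixed interval $[\mu_{\dfil}-\sigma_{2^i},\mu_{\dfil}+\sigma_{2^i}]$, keeps precisely the samples whose $\dfil$‑coordinate has rank between $a$ and $b$, where $a,b$ are the extreme ranks of $\dfil$‑coordinates landing in that interval; hence the enumerated filter $[Y_a^{\dfil},Y_b^{\dfil}]$ induces \emph{exactly} the one‑dimensional instance analyzed there, so there is no discretization loss. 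A Chernoff bound over the $O(\log n)$ scales and two coordinates (using that $C'$ is large) shows that with probability $1-n^{-\delta}/4$ that interval contains the $\Omega(2^i)=\Omega(C'\log n)$ ``good'' samples witnessing the balance, so in particular $a<b$.

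For step (iii): \textsc{Generate-Tests} is run on all $n$ samples of the $\dest$‑coordinate inside that subroutine, so by \cref{corr:pows-ok} (probability $1-n^{-\dpa}$) the returned set $\spa$ contains values within a factor $2$ of $w=\sigma_{2^i}$ and of $\Delta$ (which is within a $\mathrm{poly}(n)$ factor of $\sigma_{2^i}$); by \cref{lemma:perturb-params} the $2C_1$‑good balance stays $C_1$‑good after replacing $(w,\Delta)$ with the tested values, and by the analogue of \cref{lemma:all-good-are-good} with an extra $n^2$ in the union bound (legitimate because each one‑dimensional instance is a set of samples selected independently of their $\dest$‑coordinate, exactly as in \cref{corr:no-wrong}) it then passes, yielding a valid confidence interval of width at most $4\Delta$ for $\mu_{\dest}$. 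Intersecting over all instances, and rerunning the argument with the coordinates swapped to estimate $\mu_1$, gives the stated error.

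The step I expect to be the main obstacle is reconciling the fact that the ``good'' filter is selected as a function of the realized samples with the uniform‑convergence lemmas \cref{lemma:no-wrong,lemma:all-good-are-good}, which are phrased for fixed one‑dimensional instances whose variances are not data‑dependent. The resolution exploits that the two coordinates are independent because $\Sigma_j=\sigma_j^2 I$: conditioning on the $\dfil$‑coordinate realization fixes, for each of the $O(n^2)$ index pairs, the set of retained samples, after which their $\dest$‑coordinates are fresh independent Gaussians and the one‑dimensional machinery applies verbatim. One then still needs a routine concentration argument over the $\dfil$‑coordinate — using that the good samples alone contribute $\Omega(\log n)$ to each expectation in \cref{def:goodness} — to conclude that conditioning changes those expectations by at most a constant factor, so that ``$2C_1$‑good in expectation'' as furnished by \cref{lemma:log1-bal} survives as ``$C_1$‑good for the realized filtered instance,'' which is what \cref{lemma:all-good-are-good} consumes.
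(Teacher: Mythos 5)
Your proposal follows essentially the same three-step route as the paper's proof: (i) \cref{corr:no-wrong} ensures no invalid intervals, (ii) \cref{lemma:log1-bal} supplies a $C_1$-good balance at the claimed $\Delta$ for the ideal filter, and (iii) \cref{corr:pows-ok} plus \cref{lemma:perturb-params} plus a 2D analogue of \cref{lemma:all-good-are-good} (with an extra $n^2$ union bound over filters, as in \cref{corr:no-wrong}) guarantee the algorithm tests and passes a nearby balance. The additional discussion of how to reconcile the data-dependent filter with the uniform-convergence lemmas -- conditioning on the $\dfil$-coordinate, observing the $\dest$-coordinates remain fresh, and noting the ideal filter coincides with some enumerated rank interval $[Y_a^{\dfil},Y_b^{\dfil}]$ -- is a subtlety the paper's terse proof glosses over, and your resolution (a concentration argument showing the conditional expectations stay within a constant factor of those \cref{lemma:log1-bal} computes, so $2C_1$-goodness degrades gracefully to $C_1$-goodness) is a reasonable way to fill that gap; it does not change the overall structure or route.
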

\begin{proof}
    By \cref{corr:no-wrong}, we know there are no incorrect confidence intervals if we set $\dnfp$ accordingly relative to $\delta$. Moreover, by \cref{lemma:log1-bal}, we know that for any $C_1$ there is a $C'$ such that there is a $C_1$-good balance with a $\Delta$ having the desired guarantee with respect to $C'$. Accordingly, if we set $\dpa$ accordingly, by \cref{corr:pows-ok} we test a $C_1/2$-good balance with the desired $\Delta$, if we try the correct filter. As we test all filters, we test such a balance. Moreoever, if we set $\dnfn$ accordingly, all $C_1/2$-good balances that we test will pass. Thus, we obtain no incorrect confidence intervals, and we obtain at least one correct confidence interval with the desired width to conclude our algorithm obtains the desired error.
\end{proof}
\subsection{Handling Dependence on the Smallest Variances}\label{sec:multi-small}

While \cref{theorem:est-multi-large} only has dependence on $\sigma_i$ for $i \ge C' \log(n)$, it is still possible to obtain guarantees with dependence on the smaller terms. Note how for the desired guarantees to not automatically hold from \cref{theorem:est-multi-large}, it must be the case that $\sigma_2 \ll \sigma_{C' \log(n)}$. Accordingly, we will find it sufficient to estimate based on the closest pair of observations that are within or near $\conf^1,\conf^2$ from \cref{algo:multi}. We formalize this in \cref{algo:adjust-multi}.

\begin{algorithm}[t]
    \caption{Adjusted 2-Dimensional Estimation-Algorithm} \label{algo:adjust-multi}
    \hspace*{\algorithmicindent} 
    \begin{flushleft}
      {\bf Input:} $Y_1^1 \le \dots \le Y_n^1$ and $Y_1^2 \le Y_2^2 \le \dots \le Y_n^2$ \\
      {\bf Output:} Estimates $\muh_1,\muh_2$ \\
    \end{flushleft}
    \begin{algorithmic}[1]
    
    \Procedure{Adjusted-Multi-Estimation-Algorithm}{$X_1 \le \dots \le X_n$}:
        \State $\conf^1,\conf^2 \gets \operatorname{Multi-Estimation-Algorithm}(X)$
        \State $L \gets \max \left(\send(\conf^1)-\ssta(\conf^1),\send(\conf^2)-\ssta(\conf^2)\right)$ \Comment{Longer confidence length.}
        \State $W_1 \gets [\ssta(\conf^1) - L, \send(\conf^1) + L]$ \Comment{Ranges of confidence that contain $[\mu-L,\mu+L]$.}
        \State $W_2 \gets [\ssta(\conf^2) - L, \send(\conf^2) + L]$
        \State $D_{\textrm{best}} \gets \infty$ \Comment{Closest pair distance.}
        \State $\muh_1 \gets \ssta(W_1)$ \Comment{Arbitrary point in confidence range.}
        \State $\muh_2 \gets \ssta(W_2)$
        \For{$i<j \in [n]$}
            \If{$Y_i^1,Y_j^1 \in W_1$ \textbf{and} $\other(Y_i^1),\other(Y_j^1) \in W_2$} \Comment{Both within confidence.}
                \State $D_{i,j} \gets \max\left(|Y_i^1 - Y_j^1|,|\other(Y_i^2)-\other(Y_j^2)|\right)$
                \If{$D_{i,j} < D_{\textrm{best}}$} \Comment{This is the closest pair so far.}
                    \State $D_{\textrm{best}} \gets D_{i,j}$ \Comment{Updating closest length.}
                    \State $\muh_1 \gets Y_i^1$ \Comment{Arbitrary point of the pair.}
                    \State $\muh_2 \gets \other(Y_i^1)$
                \EndIf
            \EndIf
        \EndFor
        
    \Return $\muh_1,\muh_2$ 
    \EndProcedure
    \end{algorithmic}
\end{algorithm}

\theoremestmultiboth*
\begin{proof}
    Let us denote $\rsmall = \min_{2 \le 2^i \le C' \log(n)} \frac{\sigma_{2^i}}{\sqrt{2^i}}$, and $\rlarge = \min_{C' \log(n) \le 2^i \le n} \frac{\sigma_{2^i}}{\sqrt{2^i}}$. By definition, $R(\sigma) = \min\left(\rsmall,\rlarge\right)$.
    
    First, recall how by \cref{theorem:est-multi-large}, our ranges $\conf^1$ and $\conf^2$ both contain $\mu$ and are of length $\tilde{O}(\rlarge)$ with probability $1-o(1)$. In this event, then $W_1,W_2$ also both contain $\mu$ and are of length $\tilde{O}(\rlarge)$. As our algorithm returns an estimate within $W_1,W_2$, then our error is $\tilde{O}(\rlarge)$.

    Our main thrust is to show that the algorithm returns a quantity that is $\tilde{O}(\rsmall)$ with probability $1-o(1)$. Note that $\rsmall \ge \frac{\sigma_2}{\sqrt{C' \log(n)}}$. So, $\sigma_2 = \tilde{O}(\rsmall)$. Observe in \cref{algo:adjust-multi} that $L$ is the length of the longest interval. As we have proven that $L = \tilde{O}(\rlarge)$, then there must exist some constant $k$ such that for sufficiently large $n$ it holds $L \le \log^k(n)$. If $\rlarge \le \log^{k+4}(n) \sigma_2$ then our theorem immediately holds, meaning otherwise $\frac{\sigma_j}{\sqrt{j}} \ge \log^{k+4}(n) \sigma_2$ for every $j\ge C' \log(n)$. Let us focus on the $1-o(1)$ probability event that the subroutine of \cref{algo:multi} is correct. Note, if $L \le \log(n) \sigma_2$, our error is $\tilde{O}(\rsmall)$. Otherwise, note how $W_1$ and $W_2$ must contain the entire ranges $[\mu_1-\log(n) \sigma_2,\mu_1 + \log(n) \sigma_2]$ and $[\mu_2-\log(n) \sigma_2,\mu_2+\log(n) \sigma_2]$, respectively. With probability $1-o(1)$, the samples of $X_1$ and $X_2$ will fall within this range, and thus $D_{\textrm{best}} \le 2 \log(n) \sigma_2$. As long as we can show that all other closer pairs must be within $\tilde{O}(\sigma_2)$ of $\mu$ with probability $1-o(1)$, then our proof is complete. We handle this in four cases:

    \underline{$\min(\sigma_i,\sigma_j) \le \log^{k+5}(n) \sigma_2$.} With high probability, all such pairs will have at least one of their points within $\tilde{O}(\sigma_2)$ of $\mu$. Moreover, as $D_{\textrm{best}} \le 2 \log(n)$, both points must then be within $\tilde{O}(\sigma_2)$ of $\mu$ if this is the closest pair that determines our estimate.

    \underline{$i,j > C' \log(n)$.} Consider the event that $X_i$ lands in both $[\mu_1-2L,\mu_2+2L]$ and $[\mu_2-2L,\mu_2+2L]$, and then $X_j$ lands within $2 \log(n) \sigma_2$ of $X_i$. This must occur for the pair to be the closest valid pair. The likelihood of the first event is upper bounded by $\left(\frac{O(L)}{\sigma_i} \right)^2 \le \left( \frac{O(\sigma_i / \sqrt{i}) \cdot \log^k(n)}{\sigma_i}\right)^2 = O(\frac{\log^{2k}(n)}{i})$. Regardless of the realization of $X_i$, the likelihood that $X_j$ lands within $2\log(n) \sigma_2$ of $X_i$ is $O\left(\left(\frac{2\log(n) \sigma_2}{\sigma_j}\right)^2\right) = O\left(\left(\frac{2\log(n) \sigma_2}{\sqrt{j} \log^{k+4}(n) \sigma_2}\right)^2\right) = O\left(\frac{1}{j \log^{2k+6}(n)}
    \right)$. Thus, the probability of both occurring is $O\left(\frac{1}{i \cdot j \cdot \log^6(n)}\right)$. By union bound of all $i,j$, the probability of any such pair having these events is at most $\sum_{i,j} O\left(\frac{1}{i \cdot j \cdot \log^6(n)}\right) = o(1)$.
    
    \underline{$i < C' \log(n)$ and $\sigma_i \ge \log^{k+5}(n) \sigma_2$ and $j>C' \log(n)$.} This follows similarly to the previous case. Consider the event that $X_j$ lands in both $[\mu_1-2L,\mu_2+2L]$ and $[\mu_2-2L,\mu_2+2L]$, and then $X_i$ lands within $2 \log(n) \sigma_2$ of $X_j$. This must occur for the pair to be the closest valid pair. The likelihood of the first event is upper bounded by $\left(\frac{O(L)}{\sigma_j} \right)^2 \le \left( \frac{O(\log^{k}(n) \sigma_j / \sqrt{j}))}{\sigma_j}\right)^2 = O(\frac{\log^{2k}(n)}{j})$. Regardless of the realization of $X_j$, the likelihood that $X_i$ lands within $2\log(n) \sigma_2$ of $X_j$ is $O\left(\left(\frac{2 \log(n) \sigma_2}{\sigma_i}\right)^2\right) = O\left(\left(\frac{2 \log(n) \sigma_2}{\log^{k+5}(n) \sigma_2}\right)^2\right) = O\left(\frac{1}{ \log^{2k+8}(n)}\right)$. Thus, the probability of both occurring is $O\left(\frac{1}{j \cdot \log^8(n)}\right)$. By union bound of all $i,j$, the probability of any such pair having these events is at most \\ $\sum_{i=1}^{C' \log(n)} \sum_j O\left(\frac{1}{j \cdot \log^8(n)}\right) = o(1)$.

    \underline{$i,j < C' \log(n)$ and $\sigma_i,\sigma_j \ge \log^{k+5}(n) \sigma_2$.} Consider the event that $X_j$ lands within $2 \log(n) \sigma_2$ of $X_i$. Regardless of the realization of $X_i$, this occurs with probability at most $O\left(\left( \frac{2 \log(n) \sigma_2}{\sigma_j}\right)^2\right) = O\left(\left( \frac{2 \log(n) \sigma_2}{\log^{k+5}(n) \sigma_2}\right)^2\right) = O\left(\frac{1}{\log^{2k + 8}(n)}\right)$. By union bound of all $i,j$, the probability of any such pair having these events is at most $O(\log^2(n)) \cdot \frac{1}{\log^{2k+8}(n)} = o(1)$.

    Thus, our algorithm attains error $\tilde{O}(R(\sigma))$ with probability $1-o(1)$.
\end{proof}

\section*{Acknowledgements}

This work was supported by the National Defense Science \& Engineering Graduate (NDSEG) Fellowship Program, Tselil Schramm's NSF CAREER Grant no. 2143246, and Gregory Valiant's Simons Foundation Investigator Award.

\bibliography{ref}

\newcommand{\etalchar}[1]{$^{#1}$}
\begin{thebibliography}{DKK{\etalchar{+}}19}

\bibitem[BS10]{belkin2010polynomial}
Mikhail Belkin and Kaushik Sinha.
\newblock Polynomial learning of distribution families.
\newblock In {\em 2010 IEEE 51st Annual Symposium on Foundations of Computer Science}, pages 103--112. IEEE, 2010.

\bibitem[CDKL14]{chierichetti2014learning}
Flavio Chierichetti, Anirban Dasgupta, Ravi Kumar, and Silvio Lattanzi.
\newblock Learning entangled single-sample gaussians.
\newblock In {\em Proceedings of the twenty-fifth annual ACM-SIAM symposium on Discrete algorithms}, pages 511--522. SIAM, 2014.

\bibitem[Che64]{chernoff1964estimation}
Herman Chernoff.
\newblock Estimation of the mode.
\newblock {\em Annals of the Institute of Statistical Mathematics}, 16(1):31--41, 1964.

\bibitem[CSV17]{charikar2017learning}
Moses Charikar, Jacob Steinhardt, and Gregory Valiant.
\newblock Learning from untrusted data.
\newblock In {\em Proceedings of the 49th Annual ACM SIGACT Symposium on Theory of Computing}, pages 47--60, 2017.

\bibitem[DKK{\etalchar{+}}19]{diakonikolas2019robust}
Ilias Diakonikolas, Gautam Kamath, Daniel Kane, Jerry Li, Ankur Moitra, and Alistair Stewart.
\newblock Robust estimators in high-dimensions without the computational intractability.
\newblock {\em SIAM Journal on Computing}, 48(2):742--864, 2019.

\bibitem[DKPZ23]{diakonikolas2023sq}
Ilias Diakonikolas, Daniel~M Kane, Thanasis Pittas, and Nikos Zarifis.
\newblock Sq lower bounds for learning mixtures of separated and bounded covariance gaussians.
\newblock In {\em The Thirty Sixth Annual Conference on Learning Theory}, pages 2319--2349. PMLR, 2023.

\bibitem[DKS19]{diakonikolas2019efficient}
Ilias Diakonikolas, Weihao Kong, and Alistair Stewart.
\newblock Efficient algorithms and lower bounds for robust linear regression.
\newblock In {\em Proceedings of the Thirtieth Annual ACM-SIAM Symposium on Discrete Algorithms}, pages 2745--2754. SIAM, 2019.

\bibitem[DLLZ20]{devroye2020mean}
Luc Devroye, Silvio Lattanzi, Gabor Lugosi, and Nikita Zhivotovskiy.
\newblock On mean estimation for heteroscedastic random variables.
\newblock {\em arXiv preprint arXiv:2010.11537}, 2020.

\bibitem[DLLZ23]{devroye2023mean}
Luc Devroye, Silvio Lattanzi, G{\'a}bor Lugosi, and Nikita Zhivotovskiy.
\newblock On mean estimation for heteroscedastic random variables.
\newblock In {\em Annales de l'Institut Henri Poincare (B) Probabilites et statistiques}, volume~59, pages 1--20. Institut Henri Poincar{\'e}, 2023.

\bibitem[HL18]{hopkins2018mixture}
Samuel~B Hopkins and Jerry Li.
\newblock Mixture models, robustness, and sum of squares proofs.
\newblock In {\em Proceedings of the 50th Annual ACM SIGACT Symposium on Theory of Computing}, pages 1021--1034, 2018.

\bibitem[IHm{\etalchar{+}}81]{ibragimov1981statistical}
IA~Ibragimov, RZ~Has'~minskii, et~al.
\newblock Statistical estimation: Asymptotic theory.
\newblock {\em Springer Book Archive-Mathematics}, 1981.

\bibitem[KP90]{kim1990cube}
Jeankyung Kim and David Pollard.
\newblock Cube root asymptotics.
\newblock {\em The Annals of Statistics}, pages 191--219, 1990.

\bibitem[KSS{\etalchar{+}}20]{pmlr-v119-kong20a}
Weihao Kong, Raghav Somani, Zhao Song, Sham Kakade, and Sewoong Oh.
\newblock Meta-learning for mixed linear regression.
\newblock In Hal~Daumé III and Aarti Singh, editors, {\em Proceedings of the 37th International Conference on Machine Learning}, volume 119 of {\em Proceedings of Machine Learning Research}, pages 5394--5404. PMLR, 13--18 Jul 2020.

\bibitem[LRR13]{levi2013testing}
Reut Levi, Dana Ron, and Ronitt Rubinfeld.
\newblock Testing properties of collections of distributions.
\newblock {\em Theory of Computing}, 9(1):295--347, 2013.

\bibitem[LRV16]{lai2016agnostic}
Kevin~A Lai, Anup~B Rao, and Santosh Vempala.
\newblock Agnostic estimation of mean and covariance.
\newblock In {\em 2016 IEEE 57th Annual Symposium on Foundations of Computer Science (FOCS)}, pages 665--674. IEEE, 2016.

\bibitem[LY20]{liang2020learning}
Yingyu Liang and Hui Yuan.
\newblock Learning entangled single-sample gaussians in the subset-of-signals model.
\newblock In {\em Conference on Learning Theory}, pages 2712--2737. PMLR, 2020.

\bibitem[MV10]{moitra2010settling}
Ankur Moitra and Gregory Valiant.
\newblock Settling the polynomial learnability of mixtures of gaussians.
\newblock In {\em 2010 IEEE 51st Annual Symposium on Foundations of Computer Science}, pages 93--102. IEEE, 2010.

\bibitem[PJL19]{pensia2019estimating}
Ankit Pensia, Varun Jog, and Po-Ling Loh.
\newblock Estimating location parameters in entangled single-sample distributions.
\newblock {\em arXiv preprint arXiv:1907.03087}, 2019.

\bibitem[PJL22]{pensia2022estimating}
Ankit Pensia, Varun Jog, and Po-Ling Loh.
\newblock Estimating location parameters in sample-heterogeneous distributions.
\newblock {\em Information and Inference: A Journal of the IMA}, 11(3):959--1036, 2022.

\bibitem[TKV17]{tian2017learning}
Kevin Tian, Weihao Kong, and Gregory Valiant.
\newblock Learning populations of parameters.
\newblock {\em Advances in neural information processing systems}, 30, 2017.

\bibitem[VKVK19]{vinayak2019maximum}
Ramya~Korlakai Vinayak, Weihao Kong, Gregory Valiant, and Sham Kakade.
\newblock Maximum likelihood estimation for learning populations of parameters.
\newblock In {\em International Conference on Machine Learning}, pages 6448--6457. PMLR, 2019.

\bibitem[YL20]{yuan2020learning}
Hui Yuan and Yingyu Liang.
\newblock Learning entangled single-sample distributions via iterative trimming.
\newblock In {\em International Conference on Artificial Intelligence and Statistics}, pages 2666--2676. PMLR, 2020.

\end{thebibliography}
\end{document}